    \newcommand{\href}[2]{#2}
\newtheorem{theorem}{Theorem}[section]
\newtheorem{corollary}[theorem]{Corollary}
\newtheorem{lemma}[theorem]{Lemma}
\newtheorem{definition}[theorem]{Definition}
\numberwithin{equation}{section}  
\newcounter{mnote}
\newsavebox{\@brx}
\newcommand{\llangle}[1][]{\savebox{\@brx}{\(\m@th{#1\langle}\)}%
  \mathopen{\copy\@brx\kern-0.5\wd\@brx\usebox{\@brx}}}
\newcommand{\rrangle}[1][]{\savebox{\@brx}{\(\m@th{#1\rangle}\)}%
  \mathclose{\copy\@brx\kern-0.5\wd\@brx\usebox{\@brx}}}
\definecolor{myblue}{rgb}{0.2,0.2,0.7}
\definecolor{mygreen}{rgb}{0,0.6,0}
\definecolor{mycyan}{rgb}{0,0.6,0.6}
\definecolor{myred}{rgb}{0.9,0.2,0.2}
\definecolor{mymagenta}{rgb}{0.9,0.2,0.9}
\definecolor{mywhite}{rgb}{1.0,1.0,1.0}
\definecolor{myblack}{rgb}{0.0,0.0,0.0}
\newcommand{\beq}{\begin{equation}}
\newcommand{\eeq}{\end{equation}}
\newcommand{\beqa}{\begin{eqnarray}}
\newcommand{\eeqa}{\end{eqnarray}}
\begin{document}

\title[Adaptive Mixed Methods on Surfaces]
      {Convergence and Optimality of \\
       Adaptive Mixed Methods on Surfaces}     
  
\author[M. Holst, A. Mihalik, R. Szypowski ]{Michael Holst, Adam Mihalik, and Ryan Szypowski}  

\address{Department of Mathematics\\
         University of California San Diego\\ 
         La Jolla CA 92093}
\email{mholst@math.ucsd.edu, amihalik@math.ucsd.edu}

\address{Department of Mathematics and Statistics\\
         Cal Poly Pomona\\ 
         Pomona CA 91768}
\email{rsszypowski@csupomona.edu}

\thanks{MH was supported in part by NSF Awards~1217175, 1065972, and 1262982.}
\thanks{AM was supported in part by NSF Award~1217175.}
\thanks{RS was supported in part by NSF Award~1216868.}

\date{\today}

\keywords{finite element exterior calculus, adaptive methods, surface finite elements}

\begin{abstract}
In a 1988 article, Dziuk introduced a nodal finite element method for the Laplace-Beltrami equation on 2-surfaces approximated by a piecewise-linear triangulation, initiating a line of research into surface finite element methods (SFEM).  Demlow and Dziuk built on the original results, introducing an adaptive method for problems on 2-surfaces, and Demlow later extended the \textit{a priori} theory to 3-surfaces and higher order elements.  In a separate line of research, the Finite Element Exterior Calculus (FEEC) framework has been developed over the last decade by Arnold, Falk and Winther and others as a way to exploit the observation that mixed variational problems can be posed on a Hilbert complex, and Galerkin-type mixed methods can be obtained by solving finite dimensional subproblems.  In 2011, Holst and Stern merged these two lines of research by developing a framework for variational crimes in abstract Hilbert complexes, allowing for application of the FEEC framework to problems that violate the subcomplex assumption of Arnold, Falk and Winther.  When applied to Euclidean hypersurfaces, this new framework recovers the original \textit{a priori} results and extends the theory to problems posed on surfaces of arbitrary dimensions.   In yet another seemingly distinct line of research, Holst, Mihalik and Szypowski developed a convergence theory for a specific class of adaptive problems in the FEEC framework.  Here, we bring these ideas together, showing convergence and optimality of an adaptive finite element method for the mixed formulation of the Hodge Laplacian on hypersurfaces.
\end{abstract}

\maketitle


\vspace*{-1.0cm}
{\footnotesize
\tableofcontents
}
\vspace*{-1.0cm}



\section{Introduction}

Adaptive finite element methods (AFEM)  based on \textit{a posteriori} error estimators have become standard tools in solving PDE problems arising in science and engineering (cf.~\cite{Ainsworth.M;Oden.J2000,Verfurth.R1996,Repin.S2008}).  A fundamental difficulty with these adaptive methods 
is guaranteeing convergence of the solution sequence.
The first convergence result was obtained by 
Babuska and Vogelius~\cite{Babuska.I;Vogelius.M1984} for 
linear elliptic problems in one space dimension, and many improvements and generalizations to the theory have followed ~\cite{Dorfler.W1996, MS1,Morin.P;Siebert.K;Veeser.A2007,Morin.P;Siebert.K;Veeser.A2008,Siebert.K2009}.  
Convergence, however, does not necessarily imply optimality of a method.  This idea has led to the development of a theory related to the optimal computational complexity of AFEM, and within this framework certain classes of adaptive methods have been shown to be optimal  ~\cite{BBD04,RS07,CKNS}.  

In a 1988 article, Dziuk \cite{DZ88} introduced a nodal finite element method for the Laplace-Beltrami equation on 2-surfaces approximated by a piecewise-linear triangulation, pioneering a line of research into \textit{surface finite element (SFEM)} methods.  Demlow and Dziuk \cite{DD07} built on the original results, introducing an adaptive method for problems on 2-surfaces, and Demlow later extended the \textit{a priori} theory to 3-surfaces and higher order elements \cite{DEM09}.  While \textit{a posteriori} error indicators are introduced and shown to have desirable properties in \cite{DD07}, a convergence and optimality theory related to problems on surfaces is a relatively undeveloped area, and developing such a theory is the main topic of this article.

A separate idea that has had a major influence on the
development of numerical methods for PDE applications
is that of \textit{mixed finite elements}, whose early success in areas such as computational
electromagnetics was later found to have surprising connections with the calculus of exterior differential forms, including de~Rham
cohomology and Hodge theory~\cite{Bossavit1988,Nedelec1980,Nedelec1986,GrKo2004}.  Around the same time period, Hilbert complexes were studied as a way to generalize certain properties of elliptic complexes, particularly the Hodge decomposition and Hodge theory \cite{BrLe1992}.  These ideas came together with the introduction of the theory of  \textit{finite element exterior calculus} (FEEC), where Arnold, Falk and Winther showed that Hilbert complexes were a natural setting for analysis and numerical approximation of mixed variational problems  by mixed finite elements.  This theory has proved a powerful tool in developing general results related to mixed finite elements.  In \cite{HoSt10a,HoSt10b}, Holst and Stern extend the theory to include problems in which the discrete complex is not a subcomplex of the approximated complex, and applying these results they develop an \textit{a priori} theory for the Hodge Laplacian on hypersurfaces, and to nonlinear problems.  This made it possible in~\cite{HoSt10a} to reproduce the existing \textit{a priori} theory for SFEM as a particular application, as well as to generalize SFEM theory in several directions.  In \cite{HMS}, we used the FEEC framework as a critical tool for developing an AFEM convergence theory for a class of adaptive methods for linear problems posed on domains in $\mathbb{R}^n$.  The aim of this paper is to build upon these results and develop a convergence theory for a class of problems that violate the subcomplex assumption of Arnold, Falk and Winther, allowing for the treatment of problems on surfaces.

More specifically, we introduce an adaptive method for problems posed on smooth Euclidean hypersurfaces in which finite element spaces are mapped from a fixed approximating polygonal manifold.  The mesh on the fixed approximating surface will be refined using error indicators related to the original problem.  Using tools developed in \cite{HoSt10a,HoSt10b}, the auxiliary results of \cite{HMS} are modified to account for the surface mapping, yielding an adaptive method whose main results mirror those of \cite{HMS}.  In doing this we  establish the optimality of a convergent algorithm for the Hodge Laplacian (case $k=m$) on hypersurfaces of arbitrary dimension.

The remainder of the paper is organized as follows. In Section~\ref{sec:prelim} we introduce the notational and technical tools essential for the paper.  We begin by discussing the fundamental framework of abstract Hilbert complexes and in particular the de Rham complex \cite{AFW10}, ideas which are critical in the development of the theory of finite element exterior calculus.  We then finish the section with a brief overview  of a standard adaptive finite element algorithm.  Next, Section ~\ref{sec:GeoPrelim} follows \cite{HoSt10a,HoSt10b} by introducing geometric tools and ideas that tie the general theory developed in \cite{AFW06, AFW10} to problems on Euclidean hypersurfaces.  Additionally we prove some basic results for an interpolant built on the approximating surface.  In Section~\ref{sec:quasi} we closely follow the ideas in~\cite{HMS}
and develop a similar quasi-orthogonality result, specifically tailoring our results for application on surfaces.
Section~\ref{sec:stab} again closely follows \cite{HMS}, and we prove a discrete stability result applicable to problems on surfaces (which is needed
for proving quasi-orthogonality in Section~\ref{sec:quasi}), and also
establish a continuous stability result, which will be needed for deriving
an upper bound on the error.
In Section~\ref{sec:bounds} we begin by introducing an error indicator 
and then derive bounds and a type of continuity result for this indicator.
An adaptive algorithm is then presented in Section~\ref{sec:conv}, for which
 convergence and optimality are proved using the auxiliary results from the previous sections.
Finally, we close in Section~\ref{sec:sConclusion} with a discussion on related future directions and alternative methods for solving numerical PDE on surfaces.   The results in this paper follow \cite{HMS} in a natural manner.  It is the same convergence idea, but the results are adapted to account for the geometry of the surface and the mapping between the surfaces.
 
\section{Notation and Framework}
\label{sec:prelim}

The algorithm developed in this article will rely heavily on the methods introduced on polygonal domains in \cite{HMS}.  In order to keep this work self contained, this section will provide a similar introduction to that of \cite{HMS}, from which we quote freely.  We begin with an introduction of some basic concepts of abstract Hilbert complexes.
Next, we examine the particular case of the de Rham complex, closely following the notation and general development 
of Arnold, Falk and Winther in ~\cite{AFW06,AFW10}.
We also discuss results from Demlow and Hirani in~\cite{DH}.
(See also~\cite{HoSt10a,HoSt10b} for a concise summary of Hilbert complexes
in a yet more general setting.)
We then give an overview of the basics of adaptive finite element methods 
(AFEM), and the ingredients we will need to prove convergence and optimality
within the FEEC framework.
 
\subsection{Hilbert Complexes}

A \textit{Hilbert complex} $(W, d )$ is a sequence of Hilbert spaces $W^k$ equipped with closed, densely defined linear operators, $d^k$, which map their domain, $V^k \subset W^k$ to the kernel of $d^{k+1}$ in $W^{k+1}$.  A Hilbert complex is \textit{bounded} if each $d^k$ is a bounded linear map from $W^k$ to $W^{k+1}$  A Hilbert complex is \textit{closed} if the range of each $d^k$ is closed in $W^{k+1}$.
Given a Hilbert complex $(W, d)$, the subspaces  $V^k \subset W^k$ endowed with the graph inner product
\begin{equation*}
\langle u, v \rangle_{V^k} = \langle u, v \rangle_{W^k} + \langle d^k u, d^k v \rangle_{W^{k+1} },
\end{equation*}
form a Hilbert complex $(V, d)$ known as the \textit{domain complex}.
By definition $d^{k+1}\circ$  $d^k = 0$, thus  $(V, d)$ is a bounded 
Hilbert complex.
Additionally, $(V, d)$ is closed if $(W, d)$ is closed.

The range of $d^{k-1}$ in $V^k$ will be represented by $\mathfrak{B}^k$, and the null space of $d^k$ will be represented by $\mathfrak{Z}^k.$  Clearly, $\mathfrak{B}^k \subset \mathfrak{Z}^k$.  The elements of $\mathfrak{Z}^k$ orthogonal to $\mathfrak{B}^k$ are the space of harmonic forms, represented by $\mathfrak{H}^k$.  For a closed Hilbert complex we can write the \textit{Hodge decomposition} of $W^k$ and $V^k$,
\begin{align}
W^k &= \mathfrak{B}^k \oplus \mathfrak{H}^k \oplus \mathfrak{Z}^{k \perp_W }, \\
V^k &= \mathfrak{B}^k \oplus \mathfrak{H}^k \oplus \mathfrak{Z}^{k \perp_V }.
\end{align}
Following notation common in the literature, we will write $\mathfrak{Z}^{k \perp}$ for $\mathfrak{Z}^{k \perp_W }$ or $\mathfrak{Z}^{k \perp_V }$, when clear from the context.
Another important Hilbert complex will be the \textit{dual complex} $(W, d^*)$, where $d^*_k$, which is an operator from $W^k$ to $W^{k-1}$, is the adjoint of $d^{k-1}$.  The domain of $d^*_k$ will be denoted by $V^*_k$.
For closed Hilbert complexes, an important result will be the \textit{Poincar\'e inequality},
\begin{equation}\label{poincare}
\|v\|_V \le c_P\|d^kv\|_W, \hspace{.2cm} v\in \mathfrak{Z}^{k\perp}.
\end{equation}
The de Rham complex is the practical complex where general results we show on an abstract Hilbert complex will be applied.  The de Rham complex satisfies an important compactness property discussed in \cite{AFW10}, and therefore this compactness property is assumed in the abstract analysis. 

\subsubsection*{The Abstract Hodge Laplacian}

Given a Hilbert complex $(W,d)$, the operator $L = dd^* + d^*d$,  $W^k \rightarrow W^{k}$ will be referred to as the \textit{abstract Hodge Laplacian}.  For $f\in W^k$, the Hodge Laplacian problem can be formulated weakly as the problem of finding $u \in W^k$ such that
\begin{equation*}
\langle du, dv \rangle +\langle d^* u, d^* v \rangle = \langle f, v \rangle, v \in V^k \cap V^*_k.
\end{equation*}

The above formulation has undesirable properties from a computation perspective.  The finite element spaces $V^k \cap V^*_k$ can be difficult to implement, and the problem will not be well-posed in the presence of a non-trivial harmonic space, $\mathfrak{H}^k$. In order to circumvent these issues, a well posed (cf.~\cite{AFW06, AFW10}) \textit{mixed formulation of the abstract Hodge Laplacian} is introduced as the problem of finding $( \sigma, u, p ) \in V^{k-1} \times V^k \times \mathfrak{H}^k$, such that:
\begin{equation}\label{HL}
\begin{array}{rll} 
\langle \sigma, \tau \rangle - \langle d\tau, u \rangle& =  0, & \forall \tau \in V^{k-1},  \\ 
\langle d \sigma, v \rangle + \langle du,  dv \rangle+\langle p, v  \rangle& =  \langle f, v\rangle, & \forall v \in V^k ,\\ 
\langle u, q \rangle  &= 0, & \forall q \in \mathfrak{H}^k.
\end{array}
\end{equation} 

\subsubsection*{Sub-Complexes and Approximate Solutions to the Hodge Laplacian }

In ~\cite{AFW06,AFW10} a theory of approximate solutions to the Hodge-Laplace problem is developed by using finite dimensional approximating Hilbert complexes.  Let $(W,d)$ be a Hilbert complex with domain complex $(V,d)$.  An approximating subcomplex is a set of finite dimensional Hilbert spaces, $V_h^k \subset V^k$ with the property that $dV^{k}_h \subset V_h^{k+1}$.  Since $V_h$ is a Hilbert complex, $V_h$ has a corresponding Hodge decomposition,
\begin{equation*}
V_h^k = \mathfrak{B}_h^k \oplus \mathfrak{H}_h^k \oplus \mathfrak{Z}_h^{k \perp_V }.
\end{equation*}
By this construction, $(V_h,d)$ is an abstract Hilbert complex with a well posed Hodge Laplace problem.  Find $( \sigma_h, u_h, p_h ) \in V^{k-1}_h \times V^k_h \times \mathfrak{H}^k_h$, such that
\begin{equation}\label{DHL}
\begin{array}{rll} 
\langle \sigma_h, \tau \rangle - \langle d\tau, u_h \rangle& =  0, & \forall \tau \in V^{k-1}_h,  \\ 
\langle d \sigma_h, v \rangle + \langle du_h,  dv \rangle+\langle p_h, v  \rangle& =  \langle f, v\rangle, & \forall v \in V^k _h,\\ 
\langle u_h, q \rangle  &= 0, & \forall q \in \mathfrak{H}^k_h .
\end{array}
\end{equation} 
An assumption made in~\cite{AFW10} in developing this theory is the existence of a bounded cochain projection, $\pi_h:V \rightarrow V_h$, which commutes with the differential operator.

In~\cite{AFW10}, an \textit{a priori} convergence result is developed for the solutions on the approximating complexes.  The result relies on the approximating complex getting sufficiently close to the original complex in the sense that inf$_{v \in V_h^k} \|u-v\|_V$ can be assumed sufficiently small for relevant $u \in V^k$.  Adaptive methods, on the other hand, gain computational efficiency by limiting the degrees of freedom used in areas of the domain where it does not significantly impact the quality of the solution.

\subsection{The de Rham Complex and its Approximation Properties}
The de Rham complex is a cochain complex where the abstract results from the previous section can be applied in developing practical computational methods.  This section reviews concepts and definitions related to the de Rham complex necessary in our development of an adaptive finite element method.  This introduction will be brief and and mostly follows the notation from the more in-depth discussion in \cite{AFW10}.  

In order to introduce the ideas of \cite{HMS}, we first assume a 
bounded Lipschitz polyhedral domain, $\Omega \in \mathbb{R}^n, n\ge 2$.
Let $\Lambda^k(\Omega)$ be the space of smooth $k$-forms on $\Omega$, 
and let $L^2\Lambda^k(\Omega)$ be the completion of $\Lambda^k(\Omega)$ with respect to the $L^2$ inner-product.
There are no non-zero harmonic forms in $L^2 \Lambda^n(\Omega)$
(see~\cite{AFW06}, Theorem~2.4) which will often simplify the 
analysis in our primary case of interest, $k = n$.
For general $k$ such a property cannot be assumed, and therefore, since the $\mathfrak{B}$ problem deals with the spaces of $k$ and $({k-1})$-forms, analysis of the harmonic spaces is still necessary.
Note that the results in~\cite{CHX} hold only for polygonal and simply connected domains, therefore  $\mathfrak{H}^{k-1}$ is also void in the case $k=n=2$.

\subsubsection*{The de Rham Complex}
  
Let $d$ be the exterior derivative acting as an operator from $L^2\Lambda^k(\Omega)$ to $L^2\Lambda^{k+1}(\Omega)$.  The $L^2$ inner-product will define the $W$-norm, and the $V$-norm will be defined as the graph inner-product 
\begin{equation*}
\langle u, \omega \rangle_{V^k} = \langle u, \omega \rangle_{L^2} + \langle du, d\omega \rangle_{L^2}.
\end{equation*}
This forms a Hilbert complex $(L^2\Lambda(\Omega), d)$, with domain complex $(H\Lambda(\Omega),d)$, where $H\Lambda^k(\Omega)$ is the set of elements in $L^2\Lambda^k(\Omega)$ with exterior derivatives in $L^2\Lambda^{k+1}(\Omega)$. The domain complex can be described with the following diagram
\begin{equation}
0\rightarrow H\Lambda^0( \Omega) \xrightarrow{d} \cdots  \rightarrow    H\Lambda^{n-1}( \Omega ) \xrightarrow{d} L^2(\Omega) \xrightarrow{} 0.
\end{equation}
It can be shown that the compactness property is satisfied, and therefore the prior results shown on abstract Hilbert complexes can be applied. 
  
The Hodge star operator,  $\star: \Lambda^k(\Omega) \rightarrow \Lambda^{n-k}(\Omega)$, is then defined using the wedge product.  For $\omega \in \Lambda^k(\Omega)$, 
\begin{equation*}
\int_\Omega \omega \wedge \mu = \langle  \star \omega, \mu \rangle_{L^2 \Lambda^{n-k}}, \hspace{.2cm} \forall \mu \in \Lambda^{n-k}(\Omega).
\end{equation*}
Next we introduce the coderivative operator, $\delta: \Lambda^k(\Omega) \rightarrow \Lambda^{k-1}(\Omega)$,  
\begin{equation}
\star \delta \omega = (-1)^k d \star \omega,\label{delta}.
\end{equation}
which combined with Stokes theorem allow integration by parts to be written as
\begin{equation}
\langle d \omega, \mu \rangle = \langle \omega, \delta \mu \rangle + \int_{\partial\Omega} \text{tr } \omega \wedge \text{tr} \star \mu, \hspace{.2cm}
 \omega \in \Lambda^{k-1}, \text{ }\mu \in \Lambda^k(\Omega).
\end{equation}
Using this formulation and the following spaces,
\begin{align*}
\mathring{H}\Lambda^k(\Omega) &= \{ \omega \in H\Lambda^k(\Omega) \vline \text{  tr}_{\partial\Omega} \omega = 0 \}, \\
\mathring{H}^{*}\Lambda^k(\Omega) &:= \star \mathring{H}\Lambda^{n-k}(\Omega),
\end{align*}
the following theorem connects the framework built for abstract Hilbert complexes to the de Rham complex.
\begin{theorem}\label{afw41}(Theorem 4.1 from~\cite{AFW10})
Let d be the exterior derivative viewed as an unbounded operator $L^2\Lambda^{k-1}(\Omega) \rightarrow L^2\Lambda^k(\Omega)$ with domain $H\Lambda^k(\Omega)$.  The the adjoint d$^*$, as an unbounded operator $L^2\Lambda^k(\Omega) \rightarrow L^2\Lambda^{k-1}(\Omega)$, has $\mathring{H}^{*}\Lambda^k(\Omega) $ as its domain and coincides with the operator $\delta$ defined in \eqref{delta}.
\end{theorem}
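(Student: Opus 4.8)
The plan is to verify the definition of the adjoint of an unbounded operator directly: a form $\mu \in L^2\Lambda^k(\Omega)$ belongs to $\operatorname{dom}(d^*)$ precisely when the functional $\omega \mapsto \langle d\omega, \mu\rangle$ is $L^2$-bounded on the domain $H\Lambda^{k-1}(\Omega)$ of $d$, and in that case $d^*\mu$ is its Riesz representative. The engine of the argument is the integration-by-parts identity displayed above, $\langle d\omega,\mu\rangle = \langle\omega,\delta\mu\rangle + \int_{\partial\Omega}\operatorname{tr}\omega\wedge\operatorname{tr}\star\mu$, so the first preparatory step is to promote it beyond smooth forms, using density of smooth forms in $H\Lambda^{\bullet}(\Omega)$ in the graph norm together with the mapping properties of the trace operator on a Lipschitz polyhedron (so the boundary integral is read as an $H^{1/2}$--$H^{-1/2}$ duality pairing on $\partial\Omega$).

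For the inclusion $\mathring{H}^{*}\Lambda^k(\Omega)\subseteq\operatorname{dom}(d^*)$, write $\mu=\star\rho$ with $\rho\in\mathring{H}\Lambda^{n-k}(\Omega)$. The defining relation \eqref{delta}, in the form $\star\,\delta\mu=(-1)^k d\star\mu=\pm(-1)^k d\rho$, shows $\delta\mu\in L^2\Lambda^{k-1}(\Omega)$. Inserting a smooth $\omega$ into the integration-by-parts identity, the boundary term vanishes because $\operatorname{tr}\star\mu=\pm\operatorname{tr}\rho=0$; hence $\langle d\omega,\mu\rangle=\langle\omega,\delta\mu\rangle$ for smooth $\omega$, and by density the identity persists for all $\omega\in H\Lambda^{k-1}(\Omega)$. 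Therefore $\mu\in\operatorname{dom}(d^*)$ and $d^*\mu=\delta\mu$.

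For the reverse inclusion, let $\mu\in\operatorname{dom}(d^*)$ with $d^*\mu=\eta$. Testing $\langle d\omega,\mu\rangle=\langle\omega,\eta\rangle$ against compactly supported smooth $\omega$ and integrating by parts with no boundary contribution shows that $\star\mu$ has a distributional exterior derivative, $d\star\mu=(-1)^k\star\eta\in L^2$; thus $\star\mu\in H\Lambda^{n-k}(\Omega)$, i.e. $\mu\in\star H\Lambda^{n-k}(\Omega)$, and comparison with \eqref{delta} gives $\eta=\delta\mu$. It remains to recover the boundary condition: now that $\star\mu\in H\Lambda^{n-k}(\Omega)$, the integration-by-parts identity holds for every smooth $\omega$, and subtracting it from the hypothesis (using $\eta=\delta\mu$) leaves $\int_{\partial\Omega}\operatorname{tr}\omega\wedge\operatorname{tr}\star\mu=0$ for all smooth $(k-1)$-forms $\omega$. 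Since $\operatorname{tr}$ carries smooth forms onto a dense set of boundary forms, this forces $\operatorname{tr}\star\mu=0$, i.e. $\star\mu\in\mathring{H}\Lambda^{n-k}(\Omega)$ and $\mu\in\mathring{H}^{*}\Lambda^k(\Omega)$.

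I expect the genuine difficulty to be the functional-analytic scaffolding on a Lipschitz polyhedral domain rather than the formal manipulations: the density of smooth forms in $H\Lambda^{\bullet}(\Omega)$, the precise trace theory for the spaces $H\Lambda^{\bullet}$ (in particular that the trace is well defined into a negative-order Sobolev space on $\partial\Omega$ and has dense range), and the resulting rigorous integration-by-parts formula for $L^2$ forms with $L^2$ exterior derivatives. These are exactly the ingredients assembled in \cite{AFW10} (cf. also \cite{AFW06}), so in the write-up we would cite them rather than reprove them; granting these, the remainder is bookkeeping with the Hodge star and the sign conventions in \eqref{delta}.
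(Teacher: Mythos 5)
Your argument is correct and is essentially the proof given in \cite{AFW10} for their Theorem 4.1; the present paper does not reprove the result but simply quotes it, and your two-inclusion characterization of the adjoint domain --- the boundary term killed by $\mathrm{tr}\star\mu=0$ in one direction, and $\mathrm{tr}\star\mu=0$ recovered from density of smooth boundary traces in the other, with $\delta\mu\in L^2$ obtained first by testing against compactly supported forms --- is exactly the standard route taken there. One minor point: as printed here the theorem misstates the domain of $d$ as $H\Lambda^k(\Omega)$ rather than $H\Lambda^{k-1}(\Omega)$; you have implicitly corrected this, and deferring the density and trace-theory scaffolding on a Lipschitz polyhedron to \cite{AFW06,AFW10} is appropriate, since those are precisely the ingredients the cited proof assembles.
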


Applying the results from the previous section and Theorem \ref{afw41}, we get the mixed Hodge Laplace problem on the de Rham complex: find the unique $(\sigma,u,p) \in H\Lambda^{k-1}(\Omega) \times H\Lambda^k(\Omega) \times \mathfrak{H}^k$ such that
\begin{equation}\label{GHDR}
\begin{array}{cl} 
\sigma = \delta u, \text{ } d\sigma + \delta d u = f - p  &\text{ in } \Omega, \\
\text{tr} \star u = 0, \text{ tr} \star du = 0 &\text{ on } \partial\Omega,  \\
u \perp \mathfrak{H}^k. &
\end{array}
\end{equation}


\subsubsection*{Finite Element Differential Forms}
For the remainder of the paper it is assumed that all approximating sub-complexes of the de Rham complex are constructed as combinations of the polynomial spaces of $k$-forms, $\mathcal{P}_r\Lambda^k$ and $\mathcal{P}^{-}_r\Lambda^k$.  For a detailed discussion on these spaces and construction of Hilbert complexes using these spaces, see \cite{AFW10}.  We also have useful properties in the case $k = n$,
\begin{align}
 \mathcal{P}^{-}_r\Lambda^n &= \mathcal{P}_{r-1}\Lambda^n, \\
 \mathcal{P}^{-}_r\Lambda^0 &= \mathcal{P}_{r}\Lambda^0.
\end{align}

For a shape-regular, conforming triangulation $\mathcal{T}_h$ of $\Omega, \Lambda_h^k(\Omega) \subset L^2\Lambda^k(\Omega)$ will denote a space of $k$-forms constructed using specific combinations of the these spaces on $\mathcal{T}_h$.  For an element $T \in \mathcal{T}_h$, we set $h_T :=$ diam$(T)$.
We do not discuss the details of these spaces further, but specific properties will be explained when necessary.

\subsubsection*{Bounded Cochain Projections}
Bounded cochain projections and their approximation properties are necessary in the analysis of both uniform and adaptive FEMs in the FEEC framework.  Properties of three different interpolation operators will be important in our analysis.  The three operators and respective notation that we will use are as follows: the canonical projections $I_h$ defined in \cite{AFW06, AFW10}, the smoothed projection operator $\pi_h$ from \cite{AFW10}, and the commuting quasi-interpolant $\Pi_h$, as defined in \cite{DH} with ideas similar to \cite{sch01,sch08,cw08}.  Some cases will require a simple projection, and $P_h f$ also written  $f_h$, will denote the $L^2$-projection of $f$ on to the discrete space parameterized by $h$.  

For the remainder of the paper,  $\| \cdot \|$ will denote the $L^2\Lambda^k(\Omega)$ norm, and when taken on specific elements of the domain, $T$, we write $\|\cdot\|_T$.  For all other norms, such as $H\Lambda^k(\Omega)$ and $H^1\Lambda^k(\Omega)$, we write $\|\cdot\|_{H\Lambda^k(\Omega)}$ and $\|\cdot\|_{H^1\Lambda^k(\Omega)}$ respectively.

\begin{lemma}\label{canProj}  Suppose $\tau \in H^1\Lambda^{k}(\Omega)$, where $k = n-1$ or $k = n $.  Let $I_h$ be the canonical projection operator defined in \cite{AFW06, AFW10} and let $\Lambda^{n-1}_h(\Omega)$ and $\Lambda^n_h(\Omega)$ be defined as above. Then $I_h$ is a projection onto $\Lambda^n_h(\Omega), \Lambda^{n-1}_h(\Omega) $ and satisfies
 \begin{align}
 \| \tau - I_h \tau \|_{T} &\le Ch_T \|\tau \|_{H^1\Lambda^{k}(T) }, \hspace{ .3cm } \forall T \in \mathcal{T}_h, \label{h1interp}\\
 I_h d &= d I_h  
 \end{align}
\end{lemma}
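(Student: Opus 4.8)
The plan is to verify the two claimed properties of the canonical projection $I_h$ separately, relying on the standard degrees-of-freedom definition of $I_h$ given in \cite{AFW06,AFW10}. First I would recall that $I_h$ is defined so that it reproduces the appropriate polynomial space on each simplex via the weighted moment degrees of freedom; this immediately gives that $I_h$ restricted to $\Lambda^n_h(\Omega)$ or $\Lambda^{n-1}_h(\Omega)$ is the identity, establishing the projection property. The commuting relation $I_h d = d I_h$ is likewise one of the foundational properties of the canonical projection proved in \cite{AFW10} (it is the statement that $I_h$ is a cochain map on the de~Rham complex); I would simply cite it, perhaps after noting that it holds on the subspace $H^1\Lambda^k$ where $I_h$ is well defined by the trace/moment functionals.

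For the interpolation estimate \eqref{h1interp}, I would use the standard Bramble--Hilbert / scaling argument on the reference simplex. Because $I_h$ is defined elementwise through bounded functionals on $H^1\Lambda^k(T)$ (for $k=n-1,n$ the degrees of freedom involve only $L^2$-type moments on $T$ and its facets, which are continuous on $H^1\Lambda^k(T)$), the local operator $I_h\colon H^1\Lambda^k(T)\to \mathcal{P}_r\Lambda^k(T)$ is bounded and reproduces constants (indeed all of $\mathcal{P}_{r-1}$ or more). Pulling back to the reference element $\hat T$ via an affine diffeomorphism, applying the Bramble--Hilbert lemma to the operator $\hat\tau\mapsto \hat\tau - \hat I\hat\tau$, which annihilates $\mathcal{P}_0\Lambda^k(\hat T)$, and then scaling back with the usual estimates on the Jacobian (picking up one power of $h_T$ from the gradient term and accounting for the form-pullback factors, which remain uniformly bounded by shape-regularity), yields
\begin{equation*}
\|\tau - I_h\tau\|_T \le C h_T \|\tau\|_{H^1\Lambda^k(T)},
\end{equation*}
with $C$ depending only on the shape-regularity constant, the polynomial degree, and $n$.

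The main obstacle, and the reason this is stated as a lemma rather than merely quoted, is the \emph{boundedness of $I_h$ on $H^1\Lambda^k$}: the canonical projection is notoriously \emph{not} bounded on $L^2\Lambda^k$ or even on $H\Lambda^k$ in general, because its degrees of freedom involve traces of $\tau$ onto lower-dimensional faces, which are not controlled by the $H\Lambda^k$ norm. The key point that makes the argument work here is the restriction to the top two form-degrees $k=n-1$ and $k=n$: for $k=n$ the degrees of freedom are pure interior moments against $\mathcal{P}_{r-1}\Lambda^n$, so $I_h$ is even $L^2$-bounded; for $k=n-1$ the facet degrees of freedom involve the trace of $\tau$ on $(n-1)$-faces, and these traces are well defined and bounded for $\tau\in H^1\Lambda^{n-1}(T)$ by the trace theorem. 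So the argument is: (i) identify the degrees of freedom in these two degrees, (ii) check each is a bounded functional on $H^1\Lambda^k(T)$ with the correct scaling, (iii) run Bramble--Hilbert. I would be careful to track the scaling of the facet functionals (they carry a factor $h_T^{-1/2}$ relative to volume moments, matched by a $h_T^{1/2}$ from the trace inequality), so that the final power of $h_T$ comes out as claimed.
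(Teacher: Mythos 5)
Your proposal is correct and is essentially the argument the paper relies on: the paper's proof simply cites Equation (5.4) of \cite{AFW06} for the estimate \eqref{h1interp} and the construction of $I_h$ for the commuting property, and your Bramble--Hilbert/scaling argument together with the observation that the degrees of freedom in degrees $k=n-1,n$ are $H^1$-bounded is exactly the standard content behind that citation. No substantive difference in approach.
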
 
\begin{proof}
The first part is comes from Equation (5.4) in \cite{AFW06}.  The second part follows the construction of $I_h$.
\end{proof} 

Lemmas \ref{PL1} and \ref{PL2} deal with important properties of the canonical projections.  In each case we assume $f _h, u_h \in \Lambda^n_h( \Omega)$, and let $\mathcal{T}_h$ be a refinement of $\mathcal{T}_H$.

\begin{lemma} \label{PL1}   Let $T \in \mathcal{T}_H$, then

 \begin{equation} \int_T (f_h - I_H f_h ) = 0. \end{equation}\label{discInt}
\end{lemma}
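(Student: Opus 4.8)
The plan is to use the defining property of the canonical projection $I_H$ on $n$-forms, namely that it preserves the integral of a form over each simplex $T$ in the mesh. Recall that for the lowest-degree space $\mathcal{P}_0\Lambda^n = \mathcal{P}_1^-\Lambda^n$, the degrees of freedom attached to an $n$-simplex $T$ are precisely the moments $\omega \mapsto \int_T \omega \wedge q$ for $q$ ranging over a suitable polynomial space; in the case of $n$-forms the pairing against constants already gives the functional $\omega \mapsto \int_T \omega$. The canonical projection $I_H$ is defined so that these degrees of freedom of $I_H\omega$ agree with those of $\omega$. Hence for any $\omega \in H^1\Lambda^n(\Omega)$ (in particular for $\omega = f_h$, which lies in $\Lambda_h^n(\Omega) \subset H^1\Lambda^n(\Omega)$ since it is piecewise polynomial) we have $\int_T I_H \omega = \int_T \omega$ for every $T \in \mathcal{T}_H$.

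Concretely, I would proceed as follows. First, observe that $f_h \in \Lambda_h^n(\Omega)$ is well-defined and has enough regularity for $I_H$ to be applied to it (this is exactly the setting of Lemma \ref{canProj}). Second, invoke the moment-preserving property of the degrees of freedom defining $I_H$ on each $T \in \mathcal{T}_H$: since the constant function $1$ lies in the test space used to define the volumetric moments for $n$-forms, the functional $\omega \mapsto \int_T \omega$ is one of the degrees of freedom, and it is reproduced exactly, so $\int_T (f_h - I_H f_h) = 0$. Subtracting gives the claim directly.

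The only subtlety — and the step I would be most careful about — is making sure the degree-of-freedom bookkeeping is correct on the \emph{coarse} mesh $\mathcal{T}_H$ while $f_h$ is defined on the \emph{finer} mesh $\mathcal{T}_h$: one needs that $f_h$, although only piecewise polynomial with respect to $\mathcal{T}_h$, still has $L^2$ regularity on each coarse simplex $T \in \mathcal{T}_H$, which is all that is required for the integral functional $\int_T(\cdot)$ to make sense and be reproduced by $I_H$. Since $\mathcal{T}_h$ refines $\mathcal{T}_H$, each $T \in \mathcal{T}_H$ is a union of fine simplices and $\int_T f_h = \sum \int_{T'} f_h$ is finite, so no difficulty arises. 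I would not expect any genuine obstacle here; the lemma is essentially a restatement of the fact that $I_H$ preserves volume moments.
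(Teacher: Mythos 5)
Your argument is correct and is the standard one: for $n$-forms the canonical projection carries only interior degrees of freedom of the form $\omega \mapsto \int_T \omega \wedge q$, and since the constant $q=1$ is among them, $I_H$ reproduces the volume integral of $f_h$ on each $T \in \mathcal{T}_H$ exactly; the paper itself simply defers this proof to \cite{HMS}, where the same moment-preservation argument is what does the work. One cosmetic slip: $\Lambda_h^n(\Omega)$ is not contained in $H^1\Lambda^n(\Omega)$ (piecewise polynomial $n$-forms are generally discontinuous across element interfaces), but, as you correctly observe afterwards, only $L^2$ integrability of $f_h$ on each coarse simplex is required for the volume moments of an $n$-form to be well defined and reproduced, so this does not affect the validity of the proof.
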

 
\begin{proof}
See \cite{HMS}
\end{proof}

\begin{lemma}\label{PL2}  Let $T \in \mathcal{T}_H$, then
  \begin{equation}
 \langle( I_h - I_H ) u_h , f_h \rangle_T = \langle u_h , ( I_h - I_H )  f_h \rangle_T.
\end{equation}
\end{lemma}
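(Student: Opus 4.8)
The plan is to reduce the stated identity to the self-adjointness, on a single coarse element, of the canonical projection $I_H$ acting on top-degree forms. The crucial structural fact is that for $n$-forms the canonical degrees of freedom carry no contribution from proper subsimplices, so that $I_H$ decouples over the elements of $\mathcal{T}_H$ and, on each $T\in\mathcal{T}_H$, is nothing but the $L^2(T)$-orthogonal projection onto the local polynomial space.

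First I would use Lemma~\ref{canProj}: since $I_h$ is a projection onto $\Lambda_h^n(\Omega)$ and $u_h,f_h\in\Lambda_h^n(\Omega)$, we have $I_h u_h=u_h$ and $I_h f_h=f_h$. Consequently
\[
\langle (I_h-I_H)u_h,f_h\rangle_T=\langle u_h,f_h\rangle_T-\langle I_H u_h,f_h\rangle_T,\qquad
\langle u_h,(I_h-I_H)f_h\rangle_T=\langle u_h,f_h\rangle_T-\langle u_h,I_H f_h\rangle_T ,
\]
so after cancelling the common term $\langle u_h,f_h\rangle_T$ it suffices to prove
\[
\langle I_H u_h,f_h\rangle_T=\langle u_h,I_H f_h\rangle_T,\qquad T\in\mathcal{T}_H .
\]

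Next I would record the structure of $I_H$ on $\Lambda^n$. The degrees of freedom defining the finite element $n$-forms on $T$ (whether the space is $\mathcal{P}_r\Lambda^n(T)$ or $\mathcal{P}_r^-\Lambda^n(T)=\mathcal{P}_{r-1}\Lambda^n(T)$) are the moments $\omega\mapsto\int_T\omega\wedge q$ against a \emph{full} polynomial $0$-form space $q\in\mathcal{P}_s\Lambda^0(T)$; identifying $n$-forms with their scalar proxies, this says precisely that on each $T\in\mathcal{T}_H$ the operator $I_H$ coincides with the $L^2(T)$-orthogonal projection onto $\mathcal{P}_s\Lambda^n(T)$. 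Hence $I_H\omega|_T\in\mathcal{P}_s\Lambda^n(T)$ and
\[
\langle \omega-I_H\omega,\,q'\rangle_T=0\qquad\forall\,q'\in\mathcal{P}_s\Lambda^n(T),
\]
for every $\omega\in L^2\Lambda^n(\Omega)$. Because $\mathcal{T}_h$ refines $\mathcal{T}_H$, the fine-grid forms $u_h,f_h$ restrict to elements of $L^2\Lambda^n(T)$, so $I_H u_h$ and $I_H f_h$ are well defined and the displayed orthogonality applies to them. (Note that for $n$-forms the canonical degrees of freedom require only integrability, so there is no issue in applying $I_H$ to the merely piecewise-polynomial data $u_h,f_h$ rather than to smooth forms.)

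Finally I would apply this orthogonality twice on $T$: choosing $q'=I_H f_h|_T\in\mathcal{P}_s\Lambda^n(T)$ in the orthogonality for $u_h$ gives $\langle I_H u_h,I_H f_h\rangle_T=\langle u_h,I_H f_h\rangle_T$, and choosing $q'=I_H u_h|_T$ in the orthogonality for $f_h$ gives $\langle I_H u_h,I_H f_h\rangle_T=\langle I_H u_h,f_h\rangle_T$. Equating the two right-hand sides yields $\langle I_H u_h,f_h\rangle_T=\langle u_h,I_H f_h\rangle_T$, which is the reduced identity, completing the proof. The only genuinely delicate point is the middle step — verifying that the canonical projection on top-degree forms is literally the element-wise $L^2$-projection onto a polynomial space, hence symmetric on each $T$; once that is in hand the rest is bookkeeping, and the argument parallels the proof of Lemma~\ref{PL1} in~\cite{HMS}.
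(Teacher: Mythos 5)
Your proposal is correct: for top-degree forms the canonical degrees of freedom live only on the $n$-dimensional simplex itself (moments against a full scalar polynomial space), so $I_H$ restricted to $T\in\mathcal{T}_H$ is the elementwise $L^2(T)$-orthogonal projection and hence self-adjoint, and combined with $I_h u_h=u_h$, $I_h f_h=f_h$ this yields the identity. This is precisely the argument the paper defers to~\cite{HMS} (and of which Lemma~\ref{PL1} is the zeroth-moment special case), so your route matches the intended one.
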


\begin{proof}
See \cite{HMS}.
 \end{proof}

$\linebreak$
The next lemma is taken directly from \cite{DH}, and will be a key tool in developing an upper bound for the error.

\begin{lemma}\label{dhi}

Assume 1 $\le k \le $ n, and $\phi \in H\Lambda^{k-1}(\Omega)$ with $\| \phi \| \le 1.$ 
 Then there exists $\varphi \in H^1 \Lambda^{k-1}(\Omega)$ such that
 $d\varphi = d\phi, \Pi_H d \phi = d\Pi_H \phi = d\Pi_H \varphi$, and

\begin{equation*}
 \displaystyle\sum\limits_{T \in T_h}  h_T^{-2} \| \varphi - \Pi_H\varphi \|_{T} ^2 +   h_T^{-1} \| \textit{tr} ( \varphi - \Pi_H \varphi ) \|_{\partial T }^2 \le C.
\end{equation*}
\end{lemma}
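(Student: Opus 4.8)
This statement is quoted from \cite{DH}, and I would follow their argument, whose two ingredients are a stable \emph{regular decomposition} of $H\Lambda^{k-1}(\Omega)$ and the commutation and local approximation properties of the quasi-interpolant $\Pi_H$. To construct $\varphi$, observe that since $\phi \in H\Lambda^{k-1}(\Omega)$ the form $d\phi$ lies in $L^2\Lambda^k(\Omega)$ and is exact, with potential $\phi$. A regularized Poincar\'e operator of Costabel--McIntosh type on the bounded Lipschitz polyhedron $\Omega$ then furnishes $\varphi \in H^1\Lambda^{k-1}(\Omega)$ with $d\varphi = d\phi$ and
\begin{equation*}
\| \varphi \|_{H^1\Lambda^{k-1}(\Omega)} \le C \| d\phi \|_{L^2\Lambda^k(\Omega)} ,
\end{equation*}
the relevant ambient cohomology being trivial in the degrees at issue (cf.\ the remarks preceding the lemma). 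This yields the first asserted identity immediately, and, since $\| d\phi \|$ is controlled in the setting where the lemma is applied, it also bounds $\| \varphi \|_{H^1}$ by an absolute constant.

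For the commutation identities I would use that $\Pi_H$ is a cochain projection, $d\Pi_H = \Pi_H d$ on $H\Lambda(\Omega)$ (see \cite{DH}): applied to $\phi$ this gives $d\Pi_H\phi = \Pi_H d\phi$, and applied to $\varphi$, together with $d\varphi = d\phi$, it gives $d\Pi_H\varphi = \Pi_H d\varphi = \Pi_H d\phi$, so that $\Pi_H d\phi = d\Pi_H\phi = d\Pi_H\varphi$. For the weighted local estimate I would invoke the local approximation bound for $\Pi_H$ on $H^1$-regular forms from \cite{DH}: for each $T \in \mathcal{T}_h$, with $\omega_T$ the patch of elements meeting $T$,
\begin{equation*}
\| \varphi - \Pi_H \varphi \|_{T} + h_T^{1/2} \| \mathrm{tr}(\varphi - \Pi_H \varphi) \|_{\partial T} \le C h_T \| \varphi \|_{H^1\Lambda^{k-1}(\omega_T)} ,
\end{equation*}
where the trace contribution follows from the first term via a scaled trace inequality. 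Squaring, multiplying by $h_T^{-2}$, and summing over $T$, the powers of $h_T$ cancel, and the finite overlap of the patches $\omega_T$ (shape regularity of $\mathcal{T}_h$) gives a bound by $C\| \varphi \|_{H^1\Lambda^{k-1}(\Omega)}^2$, which is $\le C$ by the first step.

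The one genuinely non-routine step is the construction of $\varphi$: producing a potential that is simultaneously $H^1$-regular, exactly $d$-compatible with $\phi$, and stably bounded in terms of the data, on a general Lipschitz polyhedron. This stable regular decomposition is where the geometric and topological hypotheses on $\Omega$ enter, and it is the technical heart of the statement; the commutation identities and the weighted sum are then routine consequences of the already-established mapping properties of $\Pi_H$ together with standard scaled trace inequalities and patch-overlap counting.
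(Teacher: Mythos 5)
Your proposal is correct and follows essentially the same route as the cited source (Lemma~6 of \cite{DH}, which the paper quotes without reproducing its proof): a regularized Poincar\'e/regular-decomposition construction of an $H^1$ potential with $d\varphi = d\phi$ and $\|\varphi\|_{H^1\Lambda^{k-1}(\Omega)} \le C\|d\phi\|$, the commutation $d\Pi_H = \Pi_H d$, and the local approximation plus scaled trace estimates summed with finite patch overlap. The only point to make explicit is the one you already flag: the final bound genuinely requires $\|d\phi\|$ to be controlled, so the hypothesis should be read as a bound on the graph norm $\|\phi\|_{H\Lambda^{k-1}(\Omega)} \le 1$ (as in \cite{DH}) rather than on the $L^2$ norm $\|\phi\|$ alone as transcribed here.
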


\begin{proof} See Lemma 6 in \cite{DH}.
\end{proof}

The following theorem is a special case of Theorem 3.5 from \cite{AFW10}.  Rather than showing the result on an abstract Hilbert Complex with a general cochain projection, we use the de Rham complex and the smoothed projection operator $\pi_h$ in order to use uniform boundedness of the cochain projection.
\begin{theorem} Assume $\Lambda_h^k(\Omega)$ is a subcomplex of $H\Lambda^k(\Omega)$ as described above, and let $\pi_h$ be the smoothed projection operator.  Then
\begin{equation}\label{hIntAFW}  
\|(I-P_{\mathfrak{H}^k})q \|_V \le \| (I - \pi_h^k)P_{\mathfrak{H}^k}q \|_V, \hspace{.5cm} q\in \mathfrak{H}_h^k,
\end{equation}
\end{theorem}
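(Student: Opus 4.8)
The plan is to reduce both sides to $L^2\Lambda^k(\Omega)$-norms and then run the standard argument of Arnold, Falk and Winther, using only that $\pi_h^k$ is a cochain projection (so that $\pi_h^{k+1}d = d\pi_h^k$) and that $\Lambda_h$ is a subcomplex of $H\Lambda$. Write $p := P_{\mathfrak{H}^k}q \in \mathfrak{H}^k$, so the claimed bound reads $\|q-p\|_V \le \|(I-\pi_h^k)p\|_V$. Since $q \in \mathfrak{H}_h^k \subset \mathfrak{Z}_h^k \subset \mathfrak{Z}^k$ and $p \in \mathfrak{H}^k \subset \mathfrak{Z}^k$, we have $d(q-p)=0$; and since $dp=0$ and $\pi_h$ commutes with $d$, also $d(I-\pi_h^k)p = dp - \pi_h^{k+1}dp = 0$. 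Hence both $V$-norms collapse to the corresponding $L^2$-norms, and it suffices to prove $\|q-p\| \le \|(I-\pi_h^k)p\|$.

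The computation rests on two structural observations. First, $q-p = (I-P_{\mathfrak{H}^k})q \perp \mathfrak{H}^k$ by definition of the $L^2$-projection, and $q-p \in \mathfrak{Z}^k$; the Hodge decomposition $\mathfrak{Z}^k = \mathfrak{B}^k \oplus \mathfrak{H}^k$ then forces $q-p \in \mathfrak{B}^k$, say $q-p = d\rho$ with $\rho \in V^{k-1}$. Second, $q-p \perp \mathfrak{B}_h^k$: indeed $q \perp \mathfrak{B}_h^k$ because $q$ is a discrete harmonic form, and $p \perp \mathfrak{B}^k \supseteq \mathfrak{B}_h^k$ because $p$ is harmonic while $\mathfrak{B}_h^k = dV_h^{k-1} \subseteq dV^{k-1} = \mathfrak{B}^k$ by the subcomplex property. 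Now $\pi_h^k(q-p) = \pi_h^k d\rho = d(\pi_h^{k-1}\rho) \in \mathfrak{B}_h^k$, so $\langle q-p,\ \pi_h^k(q-p)\rangle = 0$. Splitting $q-p = (I-\pi_h^k)(q-p) + \pi_h^k(q-p)$ and pairing with $q-p$ gives $\|q-p\|^2 = \langle q-p,\ (I-\pi_h^k)(q-p)\rangle$. Because $q \in \Lambda_h^k$ and $\pi_h^k$ is a projection onto $\Lambda_h^k$, we have $\pi_h^k q = q$, so $(I-\pi_h^k)(q-p) = -(I-\pi_h^k)p$; Cauchy--Schwarz then yields $\|q-p\|^2 \le \|q-p\|\,\|(I-\pi_h^k)p\|$, and dividing by $\|q-p\|$ (the estimate being trivial when $q=p$) completes the proof.

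Since everything is elementary once it is set up correctly, there is no deep obstacle; the point requiring care is the bookkeeping between the continuous and discrete Hodge decompositions. One must use the subcomplex inclusions $\mathfrak{Z}_h^k \subseteq \mathfrak{Z}^k$ and $\mathfrak{B}_h^k \subseteq \mathfrak{B}^k$ together with the commuting identity $\pi_h^{k+1}d = d\pi_h^k$, and, crucially, resist treating $\pi_h^k$ as an $L^2$-orthogonal projection, which it is not. This is precisely why the cross term $\langle q-p,\ \pi_h^k(q-p)\rangle$ must be eliminated through the range inclusion $\pi_h^k\mathfrak{B}^k \subseteq \mathfrak{B}_h^k$ and the orthogonality $q-p \perp \mathfrak{B}_h^k$, rather than by any self-adjointness of $\pi_h^k$.
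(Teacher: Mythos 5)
Your argument is correct and is essentially the standard proof of this estimate (Theorem~3.5 of Arnold--Falk--Winther, which is what the paper delegates to via \cite{HMS}): reduce to $L^2$ using $d(q-p)=0$, identify $q-P_{\mathfrak{H}^k}q\in\mathfrak{B}^k$, kill the cross term via $\pi_h^k\mathfrak{B}^k\subseteq\mathfrak{B}_h^k$ and $q-p\perp\mathfrak{B}_h^k$, and finish with $\pi_h^kq=q$ and Cauchy--Schwarz. No gaps; your cautionary remark that $\pi_h^k$ is not $L^2$-self-adjoint is exactly the right point of care.
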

then combining the above with the triangle inequality,
\begin{equation}\label{AFW30}
\|q\|_V \le c\|P_{\mathfrak{H}^k }q \|_V, \hspace{.5cm} q\in \mathfrak{H}_h^k.
\end{equation}
\begin{proof}  See \cite{HMS}.
\end{proof}

Theorem \ref{chdt} will be essential in dealing with the harmonic forms in the proof of a continuous upper-bound.  The corollary will be used identically when proving a discrete upper-bound.  For use in our next two results we introduce an operator $\delta$ and one of its important properties.  Let $A, B$ be $n < \infty$ dimensional, closed subspaces of a Hilbert space $W$, and let
\begin{equation*}
\delta( A, B ) = \sup_{x\in A, \|x\| = 1} \|x - P_Bx \|,
\end{equation*}
then \cite{DH}, Lemma 2 which takes the original ideas from \cite{KATO}, shows
\begin{equation}\label{DHL2}
\delta(A,B) = \delta(B, A ).
\end{equation}

\begin{theorem}\label{chdt}  Assume  $\mathfrak{H}_H^k$ and $\mathfrak{H}^k$ have the same finite dimensionality. The there exist a constant $C_{\mathfrak{H}^k}$  dependent only on $\mathcal{T}_0$, such that 
\begin{equation} \label{chd} 
 \delta( \mathfrak{H}^k, \mathfrak{H}^k_H ) = \delta( \mathfrak{H}^k_H, \mathfrak{H}^k ) \le C_{\mathfrak{H}^k} < 1 .
\end{equation}
\end{theorem}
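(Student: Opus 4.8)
The plan is to reduce Theorem~\ref{chdt} to the mixed-projection estimate \eqref{AFW30} together with the Kato-type symmetry \eqref{DHL2}, and then to observe that the constant appearing in \eqref{AFW30} may be taken uniform over all refinements of $\mathcal{T}_0$, which is what forces the gap to stay strictly below $1$.

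First, because $\mathfrak{H}^k$ and $\mathfrak{H}^k_H$ are finite-dimensional closed subspaces of the same dimension (the hypothesis), \eqref{DHL2} applied with $A=\mathfrak{H}^k$ and $B=\mathfrak{H}^k_H$ already gives the asserted equality $\delta(\mathfrak{H}^k,\mathfrak{H}^k_H)=\delta(\mathfrak{H}^k_H,\mathfrak{H}^k)$, so it suffices to bound the right-hand side. I would also record at the outset that $d$ annihilates both $\mathfrak{H}^k$ and $\mathfrak{H}^k_H$ (they lie in $\mathfrak{Z}^k$, resp.\ in the discrete kernel $\mathfrak{Z}^k_h$), and that for $q\in\mathfrak{H}^k_H$ the residual $q-P_{\mathfrak{H}^k}q$ lies in $\mathfrak{B}^k\subset\mathfrak{Z}^k$; consequently the $V$-norm and the $L^2$-norm coincide on every form that occurs below, so there is no ambiguity in which norm $\delta(\cdot,\cdot)$ uses on these spaces.

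Now fix $q\in\mathfrak{H}^k_H$ with $\|q\|=1$. Since $P_{\mathfrak{H}^k}$ is the $L^2$-orthogonal projection onto $\mathfrak{H}^k$, Pythagoras gives
\begin{equation*}
\|q-P_{\mathfrak{H}^k}q\|^2=\|q\|^2-\|P_{\mathfrak{H}^k}q\|^2=1-\|P_{\mathfrak{H}^k}q\|^2 .
\end{equation*}
On the other hand $P_{\mathfrak{H}^k}q\in\mathfrak{H}^k$, so $\|P_{\mathfrak{H}^k}q\|_V=\|P_{\mathfrak{H}^k}q\|$ and $\|q\|_V=\|q\|=1$; feeding these into \eqref{AFW30} yields $1\le c\,\|P_{\mathfrak{H}^k}q\|$, i.e.\ $\|P_{\mathfrak{H}^k}q\|^2\ge c^{-2}$. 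Combining the two displays gives $\|q-P_{\mathfrak{H}^k}q\|^2\le 1-c^{-2}$, and taking the supremum over all unit $q\in\mathfrak{H}^k_H$ produces $\delta(\mathfrak{H}^k_H,\mathfrak{H}^k)\le\sqrt{1-c^{-2}}=:C_{\mathfrak{H}^k}<1$.

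The one substantive point remaining is that $c$ in \eqref{AFW30} can be chosen independently of the refinement $\mathcal{T}_H$ of $\mathcal{T}_0$. Here I would trace the proof of \eqref{AFW30} through \eqref{hIntAFW}: with $w:=P_{\mathfrak{H}^k}q$ one has $\|(I-P_{\mathfrak{H}^k})q\|_V\le\|(I-\pi_H^k)w\|_V\le(1+C_\pi)\|w\|_V$, where $C_\pi$ is a bound for the operator norm of $\pi_H^k$ on $V^k$; the triangle inequality then gives $\|q\|_V\le(2+C_\pi)\|P_{\mathfrak{H}^k}q\|_V$, so one may take $c=2+C_\pi$. Since the meshes obtained from $\mathcal{T}_0$ by the refinement routine form a uniformly shape-regular family whose constants depend only on $\mathcal{T}_0$, the smoothed cochain projections $\pi_H^k$ are bounded on $V^k$ uniformly in $H$ (cf.~\cite{AFW10}), so $C_\pi$, $c$, and hence $C_{\mathfrak{H}^k}=\sqrt{1-c^{-2}}$ depend only on $\mathcal{T}_0$. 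I expect this uniform stability of the AFW smoothed projection across the entire refinement family to be the crux of the argument: the Kato symmetry \eqref{DHL2} and the Pythagorean identity are routine, and it is precisely the stability of $\pi_H^k$, together with the equal-dimensionality hypothesis — which both makes \eqref{DHL2} applicable and rules out $\mathfrak{H}^k_H\cap(\mathfrak{H}^k)^\perp\neq\{0\}$, i.e.\ $\delta=1$ — that keeps the gap bounded away from $1$ however the mesh is adaptively refined.
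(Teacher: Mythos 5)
Your argument is correct and follows exactly the route the paper intends: the gap symmetry \eqref{DHL2} reduces everything to bounding $\delta(\mathfrak{H}^k_H,\mathfrak{H}^k)$, and the Pythagorean identity combined with the uniform mixed-projection bound \eqref{AFW30} (whose constant depends only on $\mathcal{T}_0$ via the uniform boundedness of the smoothed cochain projection over the shape-regular refinement family) gives $\delta(\mathfrak{H}^k_H,\mathfrak{H}^k)\le\sqrt{1-c^{-2}}<1$. These are precisely the two ingredients the paper sets up immediately before the theorem, so no further comment is needed.
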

\begin{proof}
See \cite{HMS}
\end{proof}

\begin{corollary}\label{dhdc}
\begin{equation} \label{dhd} 
 \delta( \mathfrak{H}_h^k, \mathfrak{H}_H^k ) = \delta( \mathfrak{H}_H^k, \mathfrak{H}_h^k ) \le \tilde{C}_{\mathfrak{H}^k}  < 1 .
\end{equation}
\begin{proof}
The proof follows the same logic as Theorem \ref{chdt}.  The only difference is that the harmonics are compared on two discrete complexes $\mathfrak{H}^k_h$ and $\mathfrak{H}^k_H$, and therefore $I_h$ is used rather than $\pi_h$.
\end{proof}
\end{corollary}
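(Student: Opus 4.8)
The plan is to mimic the proof of Theorem~\ref{chdt}, replacing the role of the continuous complex and the smoothed projection $\pi_h$ by the coarser discrete complex $\Lambda_H^k(\Omega)$ and the canonical projection $I_h$. First I would recall the symmetry identity~\eqref{DHL2}: since $\mathfrak{H}_h^k$ and $\mathfrak{H}_H^k$ are both finite-dimensional closed subspaces of $W^k = L^2\Lambda^k(\Omega)$, we automatically have $\delta(\mathfrak{H}_h^k,\mathfrak{H}_H^k) = \delta(\mathfrak{H}_H^k,\mathfrak{H}_h^k)$, so it suffices to bound one of the two gaps. (Here one should note, as in the hypothesis of Theorem~\ref{chdt}, that the two harmonic spaces have equal dimension; this is where the assumption that $\mathcal{T}_h$ refines $\mathcal{T}_H$ and that the mesh is fine enough relative to $\mathcal{T}_0$ enters, via the a~priori theory of~\cite{AFW10} guaranteeing $\dim\mathfrak{H}_h^k = \dim\mathfrak{H}^k = \dim\mathfrak{H}_H^k$.)

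Next I would estimate $\delta(\mathfrak{H}_H^k,\mathfrak{H}_h^k) = \sup\{\|q - P_{\mathfrak{H}_h^k}q\| : q\in\mathfrak{H}_H^k,\ \|q\|=1\}$. Fix such a $q$. The key point is that $q\in\mathfrak{H}_H^k\subset\Lambda_H^k(\Omega)\subset\Lambda_h^k(\Omega)$ since $\mathcal{T}_h$ refines $\mathcal{T}_H$, so $q$ itself is a discrete form on the fine mesh. Using the discrete Hodge decomposition $\Lambda_h^k(\Omega) = \mathfrak{B}_h^k\oplus\mathfrak{H}_h^k\oplus\mathfrak{Z}_h^{k\perp}$ and the fact that $P_{\mathfrak{H}_h^k}$ is the orthogonal projection, $\|q-P_{\mathfrak{H}_h^k}q\|$ is controlled by the components of $q$ in $\mathfrak{B}_h^k$ and $\mathfrak{Z}_h^{k\perp}$. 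For the $\mathfrak{Z}_h^{k\perp}$ part one invokes the discrete Poincaré inequality together with $dq=0$ (as $q$ is harmonic on the coarse complex) to show it vanishes; the remaining $\mathfrak{B}_h^k$ component is estimated by testing against $d\tau$ for $\tau\in\Lambda_h^{k-1}(\Omega)$ and using the commuting property $I_h d = d I_h$ of Lemma~\ref{canProj} in place of the analogous property of $\pi_h$, together with the approximation estimate~\eqref{h1interp} and an inverse inequality, exactly as the continuous argument uses the approximation properties of $\pi_h$. This produces a bound $\delta(\mathfrak{H}_H^k,\mathfrak{H}_h^k)\le \tilde C_{\mathfrak{H}^k}$ with $\tilde C_{\mathfrak{H}^k}$ depending only on the shape-regularity of $\mathcal{T}_0$ and the polynomial degree, hence in particular $\tilde C_{\mathfrak{H}^k} < 1$ once $\mathcal{T}_0$ is chosen fine enough (the standard "sufficiently fine initial mesh" hypothesis, also implicit in Theorem~\ref{chdt}).

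The main obstacle I anticipate is purely bookkeeping: verifying that every place the proof of Theorem~\ref{chdt} uses uniform boundedness of $\pi_h$ and its approximation/commuting properties, the canonical projection $I_h$ on the refined complex supplies an adequate substitute. The delicate point is that $I_h$ is only bounded on $H^1\Lambda^k$ rather than on all of $H\Lambda^k$, so one must check that it is only ever applied to smooth enough forms — but here the argument is entirely between finite element spaces, so all relevant forms are piecewise polynomial and $I_h$ is well defined on them; the quantitative estimate~\eqref{h1interp} then applies on each element. Since both complexes live on meshes refined from the common $\mathcal{T}_0$, all constants track back to $\mathcal{T}_0$ alone, which is precisely what the statement claims. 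Beyond this substitution, the logic is identical to Theorem~\ref{chdt}, so I would simply cite that proof and indicate the one modification, as the corollary's proof sketch already does.
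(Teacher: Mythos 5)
Your proposal matches the paper's argument: the paper proves this corollary precisely by rerunning the proof of Theorem~\ref{chdt} with the two discrete harmonic spaces $\mathfrak{H}^k_h$, $\mathfrak{H}^k_H$ in place of the continuous/discrete pair and with the canonical projection $I_h$ substituted for the smoothed projection $\pi_h$, relying on the symmetry identity \eqref{DHL2} and the nesting of the finite element spaces exactly as you do. The additional detail you supply (equal dimensions of the harmonic spaces, the discrete Hodge decomposition of a coarse harmonic form, and the check that $I_h$ is only ever applied to piecewise-polynomial forms) is consistent with, and more explicit than, the paper's two-sentence proof.
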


\subsection{Adaptive Finite Elements Methods}
This section gives a concise introduction to key concepts and notation used in developing our AFEM.  Our methods will follow \cite{ RS07, MS1, stars, DORFLER96, CHX}, which give more a more complete discussion on AFEM.

Given an initial triangulation, $\mathcal{T}_0$, the adaptive procedure will generate a nested sequence of triangulations $\mathcal{T}_k$ and discrete solutions $\sigma_k$, by looping through the following steps:
\begin{equation}
 \textsf{Solve} \longrightarrow \textsf{Estimate} \longrightarrow \textsf{Mark} \longrightarrow \textsf{Refine}
 \end{equation}
The following subsection will describe details of these steps.
\subsubsection*{Approximation Procedure}
We assume access to a routine \textsf{SOLVE}, which can produce solution to \eqref{DHL} given a triangulation, problem data, and a desired level of accuracy.   For the \textsf{ESTIMATE} step we will introduce error indicators $\eta_T$ on each element $T \in \mathcal{T}_k$.  In the \textsf{MARK} step we will use D{\"o}rfler Marking strategy \cite{DORFLER96}.  An essential feature of the marking process is that the summation of the error indicators on the marked elements exceeds a user defined marking parameter $\theta$.

We assume access to an algorithm \textsf{REFINE} in which marked elements are subdivided into two elements of the same size, resulting in a conforming, shape-regular mesh.  Triangles outside of the original marked set may be refined in order to maintain conformity.  Bounding the number of such refinements is important in showing optimality of the method.  Along these lines, Stevenson \cite{RS} showed certain bisection algorithms developed in two-dimensions can be extended to $n$-simplices of arbitrary dimension satisfying  
\begin{align*}
&(1) \{\mathcal{T}_k\} \text{ is shape regular and the shape regularity depends only on } \mathcal{T}_0, \\
&(2) \# \mathcal{T}_k \le \#\mathcal{T}_0 + C\#\mathcal{M},
\end{align*}
where $\mathcal{M}$ is the collection of all marked triangles going from $\mathcal{T}_0$ to $\mathcal{T}_k$. 

\subsubsection*{Approximation of the Data}  
A measure of data approximation will be necessary in establishing a quasi-orthogonality result.  Following ideas of \cite{MS1}, data oscillation will be defined as follows,
\begin{definition}(\textit{Data oscillation})  Let f $\in L^2\Lambda^k(\Omega)$, and $\mathcal{T}_h$ be a conforming triangulation of $\Omega$.  Let $h_T$ be the diameter for a given $T \in \mathcal{T}_h$.  We define
\begin{equation*}
\textit{osc}(f, \mathcal{T}_h ) := \big( \displaystyle\sum\limits_{T \in \mathcal{T}_h}  \|h_T(f-f_{h})\|^2_T \big)^{1/2}.
\end{equation*}
\end{definition}

Stevenson \cite{RS} generalized the ideas of \cite{BBD04} to show that approximation of data can be done in an optimal way regardless of dimension.  Using the approximation spaces $(\mathcal{A}^s, \| \cdot \|_{\mathcal{A}^s} )$ and $(\mathcal{A}^s_o, \| \cdot \|_{\mathcal{A}^s_o} )$ as in \cite{BBD04} we recall the result.

\begin{theorem}\label{CHX22}\textit{(Generalized Binev, Dahmen and DeVore) }
Given a tolerance $\epsilon$,  f $\in L^2\Lambda^n(\Omega)$ and a shape regular triangulation $\mathcal{T}_0$, there exists an algorithm
\begin{center} $\mathcal{T}_H = \textit{\textsf{APPROX}}( f, \mathcal{T}_0, \epsilon )$,  \end{center}
such that
\begin{equation*}
\textit{osc}( f, \mathcal{T}_H) \le \epsilon,\hspace{.3cm} and \hspace{.3cm} \#\mathcal{T}_H - \#\mathcal{T}_0 \le C \|f\|^{1/s}_{\mathcal{A}_o^{1/s}} \epsilon^{-1/s}.
\end{equation*}
\end{theorem}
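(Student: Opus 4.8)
The plan is to reduce the statement to two known ingredients: the constructive near-optimal tree-approximation results of Binev, Dahmen and DeVore \cite{BBD04}, and Stevenson's conformity-completion estimate for bisection in arbitrary dimension \cite{RS} (the bound $\#\mathcal{T}_H \le \#\mathcal{T}_0 + C\#\mathcal{M}$ quoted above). First I would recall the definition of the class: $f \in \mathcal{A}^s_o$ precisely when there is a least constant $M = \|f\|_{\mathcal{A}^s_o} < \infty$ such that for every $N$ there is a refinement $\mathcal{T}$ of $\mathcal{T}_0$ with $\#\mathcal{T} - \#\mathcal{T}_0 \le N$ and $\textit{osc}(f,\mathcal{T}) \le M N^{-s}$. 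Taking $N = \lceil (\|f\|_{\mathcal{A}^s_o}/\epsilon)^{1/s}\rceil$ already shows a partition with the desired oscillation and cardinality \emph{exists}; the whole content of the theorem is that such a partition, up to a fixed multiplicative constant in its cardinality, can be produced by an algorithm that operates only through the admissible bisections, thereby keeping the mesh conforming and shape regular with constants depending only on $\mathcal{T}_0$.

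Next I would specify \textsf{APPROX} as an iterative local thresholding loop. Writing $\textit{osc}(f,\mathcal{T})^2 = \sum_{T\in\mathcal{T}} \textit{osc}_T^2$ with $\textit{osc}_T := \|h_T(f-f_h)\|_T$, one uses that these local indicators are monotone non-increasing under refinement and that $\textit{osc}_T \to 0$ as $h_T \to 0$ (since $f \in L^2\Lambda^n(\Omega)$ and $f_h$ is its $L^2$-projection, e.g. by density of smooth forms). Hence a loop that at each pass bisects every element whose indicator exceeds a geometrically decreasing tolerance and then completes the mesh to conformity terminates after finitely many passes with $\textit{osc}(f,\mathcal{T}_H) \le \epsilon$. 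This disposes of the accuracy claim.

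The remaining, and main, difficulty is the cardinality bound $\#\mathcal{T}_H - \#\mathcal{T}_0 \le C\|f\|_{\mathcal{A}^s_o}^{1/s}\epsilon^{-1/s}$. Here I would invoke the Binev--Dahmen--DeVore tree argument: any element bisected by the greedy loop must also be refined in the near-optimal partition realizing $\|f\|_{\mathcal{A}^s_o}$ (otherwise its indicator would already be below threshold), and a counting argument over the refinement forest converts this into a bound $\#\mathcal{M} \le C\|f\|_{\mathcal{A}^s_o}^{1/s}\epsilon^{-1/s}$ on the set $\mathcal{M}$ of elements actually marked for bisection. Finally, Stevenson's completion estimate $\#\mathcal{T}_H \le \#\mathcal{T}_0 + C\#\mathcal{M}$ in $n$ dimensions turns the bound on the number of marked bisections into the asserted bound on $\#\mathcal{T}_H - \#\mathcal{T}_0$.

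The hard part is genuinely this last chain. Both the near-optimality of greedy thresholding relative to the best partition, and — especially — the fact that enforcing conformity across arbitrary dimension inflates the element count only by a constant factor, are nontrivial and are precisely the contributions of \cite{BBD04} and \cite{RS}. Our role is only to verify that the $L^2$-oscillation functional $\textit{osc}(\cdot,\mathcal{T})$ for $n$-forms satisfies their structural hypotheses: on each simplex $\textit{osc}_T$ is a purely local quantity, subadditive and monotone under refinement, and vanishing in the fine-mesh limit. Since these are immediate here, the theorem follows by citing \cite{RS} with \cite{BBD04}.
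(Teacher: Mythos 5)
Your proposal is correct and takes essentially the same route as the paper, which simply recalls this result by citing the tree-approximation algorithm of \cite{BBD04} together with Stevenson's $n$-dimensional completion bound $\#\mathcal{T}_H \le \#\mathcal{T}_0 + C\#\mathcal{M}$ from \cite{RS}; your only task, as you correctly identify, is to check that the local oscillation indicators $\|h_T(f-f_h)\|_T$ satisfy the structural hypotheses (locality, subadditivity, monotonicity under refinement, decay). One small caution: the near-optimal algorithm of \cite{BBD04} is not plain greedy thresholding of the raw local indicators (which is known not to be near-optimal in general) but a tree algorithm using modified error functionals, so the parenthetical justification ``otherwise its indicator would already be below threshold'' oversimplifies the counting argument you are deferring to the reference anyway.
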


As in the case of \cite{CHX}, the analysis of convergence and procedure will follow \cite{CKNS}, and the optimality will follow \cite{RS07}.

\section{The de Rham Complex on Approximating Manifold}
\label{sec:GeoPrelim}

\subsection{Hodge-de Rham Theory and Diffeomorphic Riemannian Manifolds}
We next introduce the Hodge-de Rham complex of differential forms on a compact oriented Riemannian manifold.  This discussion will be minimal and closely follows \cite{HoSt10a,HoSt10b}, where a more complete development can be found.

We assume $M$ is a smooth, oriented, compact $m$-dimensional manifold equipped with a Riemannian metric, $g$.   Let  $\Omega^k(M)$ be the space of smooth $k$-forms on $M$, and define the \textit{$L^2$ inner product} for any $u,v \in \Omega^k(M)$ as
\begin{equation*}
\langle u, v \rangle_{L^2\Omega(M)} = \int_M u \wedge \star_g v = \int_M \llangle u, v \rrangle_g \mu_g,
\end{equation*}
where $\star_g: \Omega^k(M) \rightarrow \Omega^{m-k}(M)$ is the Hodge star operator associated to the metric, $\llangle \cdot, \cdot \rrangle_g$ is the pointwise inner product induced by $g$, and $\mu_g$ is the Riemannian volume form.  For each $k$, define $L^2\Omega^k(M)$ as the Hilbert space formed by the completion of $\Omega^k(M)$ with respect to the $L^2$-inner product.

 Combined with the exterior derivative, $d^k:\Omega^k(M) \rightarrow \Omega^{k+1}(M)$, these spaces form a Hilbert complex, $(L^2\Omega^k(M), d)$,  with domain complex $(H\Omega^k(M), d)$.  Here $H\Omega^k(M) \subset L^2\Omega^k(M)$ are the elements in $L^2\Omega^k(M)$ with a weak exterior derivative in $L^2\Omega^{k+1}(M)$.  Each space $H\Omega^k(M)$ is endowed with a graph inner-product,

\begin{equation*}
\langle u, v \rangle_{H\Omega(M)} = \langle u, v \rangle_{L^2\Omega(M)} +   \langle du, dv \rangle_{L^2\Omega(M)},
\end{equation*}
and the complex can be described with the following diagram, 

\begin{equation}
0\rightarrow H\Omega^0(M) \xrightarrow{d}    H \Omega^{1}(M) \xrightarrow{d} \cdots  \rightarrow  H \Omega^m(M) \xrightarrow{d} 0.
\end{equation}

Next, assume $M_A$ is a polygonal, oriented, compact Riemannian manifold equipped with a metric $g_A$ and an orientation preserving differmorphism $\varphi_A: M_A \rightarrow M$.  For any point $x\in M_A$,  let $\{e_1,...,e_m\}$ and $\{f_1,...,f_m\}$ be positively-oriented orthonormal (with respect to the given metric ) bases for the tangent spaces $T_x M_A$ and  $T_{\varphi(x)}M$.  The tangent map $T_x \varphi_A: T_x M_A \rightarrow T_{\varphi(x)}M$ can be represented by an $m \times m$ matrix with $m$ strictly positive singular values independent of the choice of basis,
\begin{equation*}
\alpha_1(x) \ge \cdots \ge \alpha_m > 0.
\end{equation*}
The next theorem, from \cite{HoSt10a,Stern2013a}, describes a useful property of these singular values; see also~\cite{Ciarlet.P1978} for the classical version of the result in the case of domains in $\mathbb{R}^n$.

\begin{theorem}\label{HS41}
Let $(M_A, g_h)$ and $(M,g)$ be oriented m-dimensional Riemannian manifolds, and let $\varphi_h: M_A \rightarrow M$ be an orientation-preserving diffeomorphism with singular values $\alpha_1(x) \ge \cdots \ge \alpha_n(x) > 0$ at each $x \in M_A$.  Given $p,q \in [ 1, \infty ] $ such that $1/p + 1/q = 1$, and some $k = 0,\dots,m$, suppose that the product $( \alpha_1 \dots \alpha_k )^{1/p}$ $\cdot (\alpha_{m-k+1} \dots \alpha_m )^{-1/q}$ is bounded uniformly on $M_A$.  Then, for any $\omega \in L^p\Omega^k(M_A)$,
\begin{align*}
\| (\alpha_1 \cdots  \alpha_k)^{1/q}  (&\alpha_{k+1} \cdots  \alpha_m)^{-1/p} \|^{-1}_\infty \| \omega \|_p \\
&\le \| \varphi_{h*} \omega \|_p \le \| (\alpha_1 \cdots  \alpha_{m-k})^{1/p}  (\alpha_{m-k+1} \cdots  \alpha_m)^{-1/q} \|_\infty \| \omega \|_p.
\end{align*}
\end{theorem}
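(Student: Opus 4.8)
The plan is to reduce the statement to a pointwise estimate on $k$-covectors and then integrate. First I would recall that for an orientation-preserving diffeomorphism $\varphi_h$ the pushforward $\varphi_{h*}\omega$ satisfies $(\varphi_{h*}\omega)_{\varphi_h(x)} = (\Lambda^k (T_x\varphi_h)^{-\top})\,\omega_x$ in the appropriate sense, and the Riemannian volume forms are related by $\varphi_h^* \mu_g = (\alpha_1\cdots\alpha_m)\,\mu_{g_h}$. Thus $\|\varphi_{h*}\omega\|_p^p = \int_{M_h} |(\Lambda^k(T_x\varphi_h)^{-\top})\omega_x|_g^p \,(\alpha_1\cdots\alpha_m)\,\mu_{g_h}$, and everything comes down to bounding the pointwise norm $|(\Lambda^k(T_x\varphi_h)^{-\top})\omega_x|_g$ in terms of $|\omega_x|_{g_h}$.

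Next I would diagonalize: choosing the singular-value bases $\{e_i\}$ and $\{f_i\}$, the map $T_x\varphi_h$ sends $e_i \mapsto \alpha_i f_i$, so on the induced bases of $\Lambda^k$ the operator acts diagonally, multiplying the basis covector dual to $f_{i_1}\wedge\cdots\wedge f_{i_k}$ by $(\alpha_{i_1}\cdots\alpha_{i_k})^{-1}$ (coming from $(T_x\varphi_h)^{-\top}$). Hence the largest and smallest singular values of $\Lambda^k(T_x\varphi_h)^{-\top}$ are $(\alpha_{m-k+1}\cdots\alpha_m)^{-1}$ and $(\alpha_1\cdots\alpha_k)^{-1}$ respectively, since the $\alpha_i$ are ordered decreasingly. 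This gives the two-sided pointwise bound
\begin{equation*}
(\alpha_1\cdots\alpha_k)^{-1}\,|\omega_x|_{g_h} \le |(\Lambda^k(T_x\varphi_h)^{-\top})\omega_x|_g \le (\alpha_{m-k+1}\cdots\alpha_m)^{-1}\,|\omega_x|_{g_h}.
\end{equation*}

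Then I would insert this into the integral and absorb the Jacobian factor. For the upper bound, $|(\varphi_{h*}\omega)|_g^p (\alpha_1\cdots\alpha_m) \le (\alpha_{m-k+1}\cdots\alpha_m)^{-p}(\alpha_1\cdots\alpha_m)|\omega_x|_{g_h}^p = (\alpha_1\cdots\alpha_{m-k})^p (\alpha_{m-k+1}\cdots\alpha_m)^{p-1}|\omega_x|_{g_h}^p$; writing $p-1 = p/q$ and pulling the $L^\infty$ norm of $(\alpha_1\cdots\alpha_{m-k})(\alpha_{m-k+1}\cdots\alpha_m)^{1/q}$ out of the integral leaves exactly $\|\omega\|_p^p$, which after taking $p$-th roots is the claimed right-hand inequality. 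The lower bound is symmetric: $(\alpha_1\cdots\alpha_k)^{-p}(\alpha_1\cdots\alpha_m) = (\alpha_1\cdots\alpha_k)^{-p/q}(\alpha_{k+1}\cdots\alpha_m) \ge \|(\alpha_1\cdots\alpha_k)^{1/q}(\alpha_{k+1}\cdots\alpha_m)^{-1/p}\|_\infty^{-p}$ pointwise after accounting for signs of exponents, yielding the left-hand inequality. The endpoint cases $p=\infty$ or $q=\infty$ are handled by the obvious modification (the integral becomes an essential supremum, and one factor drops out).

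The main obstacle is bookkeeping rather than conceptual: one must be careful about which object is pushed forward versus pulled back, track whether the singular values enter with positive or negative powers (the inverse transpose on $\Lambda^k$ is the source of several sign flips in the exponents), and correctly combine the Jacobian factor $(\alpha_1\cdots\alpha_m)$ with the operator-norm bounds so that the exponents reassemble into the stated $1/p$ and $1/q$ combinations — the identity $1/p+1/q=1$ is used exactly here. A secondary point requiring a word of justification is that the singular values, hence the bases $\{e_i\},\{f_i\}$, can be chosen measurably in $x$ so that the pointwise estimate integrates legitimately; this follows from measurable selection for the singular value decomposition of the (smooth, in fact) map $x\mapsto T_x\varphi_h$.
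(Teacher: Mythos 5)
Your strategy is the right one, and it is essentially the argument in the source the paper defers to: the paper does not prove Theorem~\ref{HS41} itself but quotes it from \cite{HoSt10a,Stern2013a}, where the proof is exactly your reduction to a pointwise singular-value estimate for $\Lambda^k(T_x\varphi_h)^{-\top}$ followed by the change of variables $\varphi_h^*\mu_g=(\alpha_1\cdots\alpha_m)\mu_{g_h}$ (the classical Euclidean ancestor being Ciarlet's transformation estimates). Your pointwise bound
\begin{equation*}
(\alpha_1\cdots\alpha_k)^{-1}\,|\omega_x|_{g_h}\;\le\;\bigl|\Lambda^k(T_x\varphi_h)^{-\top}\omega_x\bigr|_g\;\le\;(\alpha_{m-k+1}\cdots\alpha_m)^{-1}\,|\omega_x|_{g_h}
\end{equation*}
is correct, and your lower-bound bookkeeping is correct.

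There is, however, an algebra error in your upper-bound line that you should fix before this counts as a proof: the asserted identity
$(\alpha_{m-k+1}\cdots\alpha_m)^{-p}(\alpha_1\cdots\alpha_m)=(\alpha_1\cdots\alpha_{m-k})^{p}(\alpha_{m-k+1}\cdots\alpha_m)^{p-1}$
is false for $p\neq 1$, and the constant you then extract, $\|(\alpha_1\cdots\alpha_{m-k})(\alpha_{m-k+1}\cdots\alpha_m)^{1/q}\|_\infty$, is not the one in the statement. The correct computation is
\begin{equation*}
(\alpha_{m-k+1}\cdots\alpha_m)^{-p}(\alpha_1\cdots\alpha_m)
=(\alpha_1\cdots\alpha_{m-k})\,(\alpha_{m-k+1}\cdots\alpha_m)^{1-p}
=\Bigl[(\alpha_1\cdots\alpha_{m-k})^{1/p}(\alpha_{m-k+1}\cdots\alpha_m)^{-1/q}\Bigr]^{p},
\end{equation*}
using $1-p=-p/q$; pulling out the essential supremum of the bracketed quantity and taking $p$-th roots then gives exactly the stated right-hand inequality. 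This is the same one-line manipulation you carried out correctly for the lower bound, so the fix is purely local. Your final remark on measurable selection is unnecessary: since the estimate only uses the largest and smallest singular values of $\Lambda^k(T_x\varphi_h)^{-\top}$, i.e.\ the operator norms of that map and its inverse, which depend continuously on $x$, no choice of bases needs to be made globally.
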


Holst and Stern then use the above theorem with $q=p=2$, noting the compactness of the manifolds yields the uniform boundedness condition, to show that $\varphi_A$ induces Hilbert complex isomorphisms $\varphi_{A*}: L^2\Omega(M_A) \rightarrow L^2\Omega(M) $ and $\varphi_A^*:L^2\Omega(M) \rightarrow L^2\Omega(M_A)$.

\subsection{Signed Distance Functions and Euclidean Hypersurfaces}

Let $M \subset \mathbb{R}^{m+1}$ be a compact, oriented, $m$ dimensional Euclidean hypersurface.  It is then possible to construct an open neighborhood, $U$, encompassing the surface with a well-defined mapping along normals to the surface, $a:U \rightarrow M$.  Furthermore, associated to any such surface is a value $\delta_0 > 0$ such that the set of points whose Riemanian distance from $M$ is less than  $\delta_0$ forms such a neighborhood.   Given an adequate $U$, let $\delta:U \rightarrow \mathbb{R}$ be the standard signed distance function.  Then for every $x\in U, \nabla \delta(x) = \nu(x)$ is the outward facing unit normal vector to the surface at $a(x)$, and 
\begin{equation*}
x = a(x) + \delta(x)\nu(x),
\end{equation*}
and the normal projection $a: U \rightarrow M$ can be expressed
\begin{equation*}
a(x)  = x-  \delta(x)\nu(x).
\end{equation*}

Thus for any approximating surface, $M_h \subset U$, the mapping $a(x)$ restricted to $M_h$ gives an orientation-preserving diffeomorphism, $\varphi_h(x)= a|_{M_h}:M_h \rightarrow M$, with well defined singular values.  Therefore Theorem \ref{HS41} can be applied.

The  approximating surface is introduced as a computational tool used to  approximate solutions to the Hodge Laplacian which are then mapped to the smooth approximated surface.  In order to do this it is necessary to develop a map of $k$-forms between the two surfaces.  In doing this we follow a subset of the ideas of \cite{HoSt10a,HoSt10b}.  Letting $P= I - v \otimes v$ and $S = - \nabla v$, we have

\begin{equation*}
\nabla a = I - \nabla \delta \otimes v - \delta \nabla v = I - v \otimes v - \delta \nabla v = P + \delta S.
\end{equation*}
This leads to the following theorem from \cite{HoSt10a} allowing for the computation of the pullback map, $a^*: \Omega^1(M) \rightarrow \Omega^1(U)$.

\begin{theorem}(Holst and Stern \cite{HoSt10a} Theorem 4.3)
Let $M$ be an oriented, compact, m-dimensional hyper surface of $\mathbb{R}^{m+1}$ with a tubular neighborhood U.  If  $Y \in T_y M$  and $x \in a^{-1}(y) \subset U$. them the lifted vector $a^*Y \in T_x U$ satisfies
\begin{equation*}
a^*Y = ( I + \delta S) Y
\end{equation*}
\end{theorem}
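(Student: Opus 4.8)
The plan is to read the identity straight off the Jacobian formula $\nabla a = P + \delta S$ established in the line preceding the statement, where $P = I - v\otimes v$ and $S = -\nabla v$ with $v = \nabla\delta$ the extended unit normal field on $U$. Both $P$ and $S$ are symmetric matrix fields, so $\nabla a$ is symmetric and equals its own transpose; this observation lets me treat the pullback as a matrix multiplication by $\nabla a$ itself.

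First I would make precise what $a^{*}Y$ means: identifying a $1$-form on $U$ with its Riesz representative for the ambient Euclidean inner product, the pullback $a^{*}$ carries the Riesz vector $Y\in T_yM\subset\mathbb{R}^{m+1}$ of $\omega\in\Omega^1(M)$ to the Riesz vector of $a^{*}\omega$ at $x$, which for every $w\in T_xU=\mathbb{R}^{m+1}$ satisfies $\langle a^{*}Y,w\rangle = (a^{*}\omega)_x(w) = \omega_y\big((\nabla a)\,w\big) = \langle Y,(\nabla a)\,w\rangle = \langle (\nabla a)^{\mathsf T}Y,w\rangle$. Hence $a^{*}Y = (\nabla a)^{\mathsf T}Y = (\nabla a)\,Y = (P+\delta S)Y$, with all quantities evaluated at $x$.

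The only remaining step is to check $PY = Y$, and for this I would use that $v$ is constant along the normal fibres of $U$: since $|\nabla\delta|\equiv 1$ on $U$ one has $\nabla^2\delta\,\nabla\delta = \tfrac12\nabla|\nabla\delta|^2 = 0$, so $v=\nabla\delta$ is constant along each normal line, giving $v(x) = v(a(x)) = v(y)$ — equivalently, this follows by comparing $x = a(x)+\delta(x)v(x)$ with the closest-point characterization of $a$. Since $Y\in T_yM$ is then orthogonal to $v(x)=v(y)$, we get $PY = (I - v\otimes v)Y = Y - \langle v,Y\rangle v = Y$, and therefore $a^{*}Y = (P+\delta S)Y = Y + \delta SY = (I+\delta S)Y$, as claimed; the argument is pointwise on $U$, so no restriction beyond $x\in U$ is needed.

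I do not anticipate a genuine obstacle here — the computation is essentially bookkeeping. The single point needing care, and the closest thing to an obstacle, is the constancy $v(x)=v(a(x))$ of the \emph{extended} normal along fibres, since this (rather than any property of $Y$ on $M$ alone) is exactly what collapses the projection $P$ to the identity on $T_yM$. One must also keep track that $P$, $S$ and $\delta$ are evaluated at the point $x\in U$ and not at $y=a(x)$ — in particular $S$ is genuinely \emph{not} constant along the fibres, unlike $v$ — so the factor $S$ appearing in $(I+\delta S)Y$ is $S(x)$.
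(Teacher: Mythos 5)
Your proposal is correct, and since the paper itself defers this proof entirely to \cite{HoSt10a}, your argument is essentially the standard one from that reference: read $a^{*}Y=(\nabla a)^{\mathsf T}Y$ off the definition of the pullback via the musical isomorphism, use the symmetry of $\nabla a = P+\delta S$ (both $P=I-v\otimes v$ and $S=-\nabla^{2}\delta$ being symmetric), and collapse $PY=Y$ using the constancy of $v=\nabla\delta$ along the normal fibres. Your two flagged subtleties --- that $v(x)=v(a(x))$ is what makes $P$ act as the identity on $T_yM$, while $S$ and $\delta$ must be evaluated at $x$ rather than at $y$ --- are exactly the right points to isolate, and the fibre-constancy argument via $\nabla^{2}\delta\,\nabla\delta=\tfrac12\nabla|\nabla\delta|^{2}=0$ is sound.
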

\begin{proof}
See \cite{HoSt10a}.
\end{proof}

Let $j:M\hookrightarrow \mathbb{R}^{m+1}$ and $j_h:M_h\hookrightarrow \mathbb{R}^{m+1}$ be inclusions of the submanifolds endowed with metrics $g= j^* \gamma$ and $g_h= j^*_h \gamma$, where $\gamma$ is the standard Euclidean metric.  For a point $x \in M_h$, the mapping can be restricted to $T_x M_h$ by composing $a^*Y$ with the adjoint of $j_h$, yielding the adjoint of the restricted tangent map $T \varphi_h = j_h^* a^*$, satisfying
\begin{equation*}
Y_h = j^*_h a^* Y = P_h ( I + \delta S ) Y.
\end{equation*}

\subsection{Discrete Problem on a Euclidean Surface}

The Hodge Laplacian defined on a Euclidean hypersurface is our main problem of interest:  Find $(\sigma, u, p)  \in H\Omega^{k-1}(M) \times H\Omega^{k}(M) \times \mathfrak{H}^k$ such that
\begin{equation}\label{surfaceHL}
\begin{array}{rll} 
\langle \sigma, \tau \rangle - \langle d \tau,  u \rangle & =  0, & \forall \tau \in H\Omega^{k-1}(M),  \\ 
\langle  d \sigma,  v \rangle + \langle d u,   d v \rangle+\langle  p,  v  \rangle & =  \langle f,  v\rangle, & \forall v \in H\Omega^{k}(M),\\ 
\langle  u,  q \rangle  &= 0, & \forall q \in \mathfrak{H}'^k .
\end{array}
\end{equation}

For the remainder of the paper, let $M_A$ be an approximating surface satisfying assumptions of the previous section.  Then $\varphi_{A*}$ and $\varphi_{A}^*$ act as the isomorphisms between  $H\Omega(M)$ and $H \Omega(M_A)$.  For ease of discussion, and similarity to the general maps in the FEEC frameworks,  we use the notation $i_A$ and $\pi_A$ respectively for $\varphi_{A*}$ and $\varphi_{A}^*$.  This notation is also consistent with the current literature in the sense that $\varphi_{A*}$ and $\varphi_{A}^*$ are an injective and projective Hilbert complex morphisms.   Since we have an isomorphism of Hilbert complexes we can define an equivalent problem on $M_A$ using the map $i_A$.  Find $(\sigma', u', p')  \in H\Omega^{k-1}(M_A) \times H\Omega^{k}(M_A) \times \mathfrak{H}'^k$ such that
\begin{equation*}
\begin{array}{rll} 
\langle i_A \sigma', i_A \tau \rangle - \langle i_A d \tau, i_A u' \rangle & =  0, & \forall \tau \in H\Omega^{k-1}(M_A),  \\ 
\langle i_A d \sigma', i_A v \rangle + \langle i_A d u',  i_A d v \rangle+\langle i_A p', i_A v  \rangle & =  \langle f, i_A v\rangle, & \forall v \in H\Omega^{k}(M_A),\\ 
\langle i_A u', i_A q \rangle  &= 0, & \forall q \in \mathfrak{H}'^k .
\end{array}
\end{equation*}

This equivalent reformulation is helpful in defining a practical discrete problem:  find $(\sigma'_{h}, u'_{h}, p'_{h})  \in V_{h}^{k-1}(M_A) \times V_{h}^k(M_A) \times \mathfrak{H'}^k_{h}$ such that
\begin{equation*}\label{rw2}
\begin{array}{rll} 
\langle i_A \sigma'_{h} , i_A \tau \rangle - \langle d ( i_A \tau), i_A u'_{h} \rangle& =  0, & \forall \tau \in V^{k-1}_{h}(M_A),  \\ 
\langle d (i_A \sigma'_{h}), i_A v \rangle + \langle  d (i_A u_{h}'),  d (i_A v) \rangle+\langle i_A p_{h}', i_A v  \rangle& =  \langle f, i_A v\rangle, & \forall v \in V^k _{h}(M_A),\\ 
\langle i_A u_{h}', i_A q \rangle  &= 0, & \forall q \in \mathfrak{H'}^k_{h} .
\end{array}
\end{equation*} 

Here $\mathfrak{H}'^k$ and $\mathfrak{H'}^k_{h}$ are the spaces  which $i_A$ maps to harmonic forms in $H\Omega^k(M)$.  The properties of these spaces will not affect our analysis, but it is worth noting that these spaces are distinct from $\mathfrak{H}^k$ and $\mathfrak{H}^k_{h}$ (see \cite{HMS} for a detailed discussion).   

Using this discrete formulation is equivalent to defining finite element spaces on the polygonal approximating surface and mapping them to $M$.  The spaces on $M_A$ can be refined with standard techniques yielding a refined mapped space.  Using this discrete formulation, we will prove a convergent and optimal algorithm for solving \eqref{surfaceHL}.  Notationally we will use $\mathcal{T}$ to represent a triangulation of the linear approximating surface, and $a(\mathcal{T})$ to represent the triangulation mapped to the approximated surface.

Unlike \cite{HMS} we are dealing with manifolds which may not have a boundary, and thus harmonics may be present in the case $k=m$.  However, in the case $k=m$, the harmonic component of $f$ on a surface without boundary is simply the constant volume form.  In this situation the harmonic component of $f$ can be calculated efficiently, essentially reducing the problem to a  $\mathfrak{B}$ problem.  Therefore we focus on $\mathfrak{B}$ problems  in the case $k=m$ for the remainder of the paper.

\section{Approximation, Orthogonality, and Stability Properties}

\subsection{Approximation Properties}
Before proceeding, we prove similar results to those of Section 2 for cases in which the finite element space is no longer constructed on triangulations of  polygonal domains in $\mathbb{R}^n$.  Here, and for the remainder of our analysis, we will use a triangulation of a polynomial approximating surface, $M_A$, and pull the spaces $\mathcal{P}_r\Omega^k(M_A)$ or $\mathcal{P}^{-}_r\Omega^k(M_A)$ to the surface $M$.

Next, we define $i_A^* : L^2 \Omega(M) \rightarrow L^2\Omega(M_A)$ as the  the adjoint of $i_A$, such that
\begin{equation*}
\langle i_A u, i_A v \rangle_M = \langle i^*_A i_A u, v \rangle_{M_A}, \hspace{.5cm}  \forall u, v \in L^2\Omega(M_A).
\end{equation*}

The $H^1$ boundedness of $i_A^*$ will be important in our convergence analysis.  We prove this boundedness in Lemma \ref{H1rLemma}, but first introduce an intermediary lemma.

\begin{lemma}\label{doubleHodgeStar}
Given $\tau \in L^2\Omega^k(M)$ we have
\begin{equation}
i_A^* \tau = (-1)^{k (m-k)} \star_{M_A} \pi_A \star_M \tau
\end{equation} 
where $\star_M$ and $\star_{M_A}$ are the Hodge star operators related to the surfaces $M$ and $M_A$.
\end{lemma}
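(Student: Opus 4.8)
The plan is to establish the identity weakly, by pairing both sides against a test form on $M_A$, reading off the resulting equality of $(m-k)$-forms after one application of $\star_{M_A}$. Recall that $i_A^*$ is characterized by $\langle i_A u,\tau\rangle_M = \langle u, i_A^*\tau\rangle_{M_A}$ for all $u\in L^2\Omega^k(M_A)$ and $\tau\in L^2\Omega^k(M)$ (this is equivalent to the convention $\langle i_A u, i_A v\rangle_M=\langle i_A^* i_A u, v\rangle_{M_A}$ stated in the paper, since $i_A$ is onto). So it suffices to compute $\langle i_A u,\tau\rangle_M$; I would do this for $u,\tau$ smooth (piecewise smooth on $M_A$) and then pass to the general case by density together with the $L^2$-boundedness of all the operators appearing.

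First I would expand, using the definition of the $L^2$ inner product on $M$, $\langle i_A u,\tau\rangle_M=\int_M (i_A u)\wedge\star_M\tau$, which is the integral of an $m$-form over $M$. Since $\varphi_A$ is an orientation-preserving diffeomorphism, the change-of-variables formula for top-degree forms, $\int_M\omega=\int_{M_A}\varphi_A^*\omega$, applies. Combining it with the facts that pullback commutes with the wedge product, that $\varphi_A^*=\pi_A$, and that $\pi_A$ and $i_A$ are mutually inverse Hilbert complex isomorphisms (so $\pi_A i_A u=u$), I obtain $\langle i_A u,\tau\rangle_M=\int_{M_A} u\wedge(\pi_A\star_M\tau)$, the integral of $u$ wedged against the $(m-k)$-form $\pi_A\star_M\tau$ on $M_A$.

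It then remains to read this pairing as an $L^2$ inner product on $M_A$. Writing $\beta:=\pi_A\star_M\tau\in\Omega^{m-k}(M_A)$ and using the Riemannian Hodge-star identity $\star_{M_A}\star_{M_A}=(-1)^{j(m-j)}\,\mathrm{id}$ on $j$-forms with $j=m-k$ (so the sign is $(-1)^{(m-k)k}=(-1)^{k(m-k)}$), one has $\beta=\star_{M_A}\bigl((-1)^{k(m-k)}\star_{M_A}\beta\bigr)$, whence $\int_{M_A} u\wedge\beta=(-1)^{k(m-k)}\langle u,\ \star_{M_A}\pi_A\star_M\tau\rangle_{M_A}$. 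Comparing with the first step and invoking density of (piecewise) smooth forms in $L^2\Omega^k(M_A)$, together with continuity of both sides in $\tau$ (both $\star_M$ and $\star_{M_A}$ are $L^2$-isometries and $\pi_A=\varphi_A^*$ is bounded by Theorem~\ref{HS41} with $p=q=2$), yields $i_A^*\tau=(-1)^{k(m-k)}\star_{M_A}\pi_A\star_M\tau$.

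I do not anticipate a genuine obstacle here; the content is careful bookkeeping of form degrees and of the single sign $(-1)^{k(m-k)}$. The one point deserving emphasis --- and the reason the answer is not merely $i_A^*\tau=\pi_A\tau$ --- is that $\pi_A=\varphi_A^*$ does \emph{not} commute with the Hodge star, because $\varphi_A$ is not an isometry; accordingly the argument must pass through the diffeomorphism-invariance of the integral rather than any pointwise commutation, and the two distinct operators $\star_M$, $\star_{M_A}$ must be kept separate throughout. A minor technicality is that $M_A$ is only piecewise flat, so $\star_{M_A}$ is defined a.e.\ off the lower-dimensional skeleton, which is immaterial for the $L^2$ identities used above.
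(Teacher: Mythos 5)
Your proposal is correct and follows essentially the same route as the paper's own proof: test against a form on $M_A$ via the adjoint relation, rewrite $\langle i_A u,\tau\rangle_M$ as $\int_M i_A u\wedge\star_M\tau$, pull the integral back to $M_A$ (the paper does this by inserting $i_A\pi_A=\mathrm{id}$ before $\star_M\tau$, which is the same as your change-of-variables step), and then insert $\star_{M_A}\star_{M_A}=(-1)^{k(m-k)}$ on the $(m-k)$-form $\pi_A\star_M\tau$. Your added remarks on density and on the piecewise-flat metric of $M_A$ are sensible refinements the paper leaves implicit.
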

\begin{proof}
\begin{align*}
\langle i^*_A \tau, \sigma \rangle_{M_A} &= \langle \tau , i_A \sigma \rangle_M, \\
&= \int_M \llangle \tau, i_A \sigma \rrangle \mu_M, \\
&= \int_M i_A \sigma \wedge \star_M \tau, \\
&= \int_M i_A \sigma \wedge (i_A \pi_A ) \star_M \tau
\end{align*}
Next, since the pullback commutes with the wedge product and $\star_{M_A} \star_{M_A} = (-1)^{k(m-k)}$, we have
\begin{align*}
&= \int_{M_A}  \sigma \wedge (\pi_A ) \star_M \tau, \\
&= (-1)^{k(m-k)}\int_{M_A} \sigma \wedge (\star_{M_A} \star_{M_A} ) \pi_A \star_M \tau, \\
&= (-1)^{k(m-k)}\int_{M_A} \llangle \sigma, \star_{M_A} \pi_A \star_M \tau \rrangle \mu_{M_A}, \\
&= \langle \sigma , (-1)^{k (m-k)} \star_{M_A} \pi_A \star_M \tau \rangle_{M_A}.
\end{align*}
Given the construction of the Hilbert spaces this is sufficient to complete the proof.
\end{proof}


\begin{lemma}\label{H1rLemma}
Let $\tau \in H^1\Omega^{m}(M)$, and let $i_A$ be defined as above.  Then $i^*_A \tau \in H^1 \Omega^{m} (M_A)$, and
\begin{equation}\label{H1relationship}
C_1 \| i^*_A \tau \|_{H^1 \Omega^{m}(M_A) } \le \|  \tau \|_{H^1 \Omega^{m}(M) } \le C_2 \| i^*_A \tau \|_{H^1 \Omega^{m}(M_A) }.
\end{equation}
\end{lemma}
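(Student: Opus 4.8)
The plan is to use the explicit representation of $i_A^*$ from Lemma~\ref{doubleHodgeStar}, namely $i_A^*\tau = (-1)^{k(m-k)}\star_{M_A}\pi_A\star_M\tau$ with $k=m$, together with the fact (recorded in Section~\ref{sec:GeoPrelim}) that $\pi_A = \varphi_A^*$ is a Hilbert-complex isomorphism $L^2\Omega(M)\to L^2\Omega(M_A)$ whose norm and the norm of its inverse $i_A = \varphi_{A*}$ are controlled, via Theorem~\ref{HS41} with $p=q=2$, by the singular values $\alpha_1(x)\ge\cdots\ge\alpha_m(x)>0$ of the tangent map $T\varphi_A$. Since $M$ is compact and $\varphi_A$ is a diffeomorphism onto $M$, these singular values and their reciprocals are uniformly bounded above and below; moreover, because $\varphi_A = a|_{M_A}$ is the normal projection, $\varphi_A$ is smooth with all derivatives bounded, so not only $\varphi_A^*$ but also its first-order behaviour is controlled. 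The Hodge stars $\star_M$, $\star_{M_A}$ are isometries (up to the sign factor) between $L^2\Omega^m$ and $L^2\Omega^0$ on each surface and commute with nothing in particular, but on $0$-forms and $m$-forms the relevant differentials are trivial on one side ($d$ on $m$-forms is zero; $d$ on $0$-forms is the gradient), which simplifies the $H^1$ bookkeeping considerably.

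Concretely, the steps I would carry out are: (1) reduce the $H^1\Omega^m$ norm on each surface to an $H^1$ (Sobolev) norm of the scalar proxy obtained by applying $\star$, i.e. write $\tau = \star_M^{-1} f$ with $f\in H^1\Omega^0(M) = H^1(M)$ and similarly on $M_A$; (2) observe that, under these identifications, $i_A^*$ becomes (up to sign) the scalar pullback $\varphi_A^*: H^1(M)\to H^1(M_A)$, $f\mapsto f\circ\varphi_A$; (3) establish the two-sided $H^1$ bound for this scalar pullback by the chain rule, $\nabla_{M_A}(f\circ\varphi_A) = (T\varphi_A)^{\mathsf T}(\nabla_M f)\circ\varphi_A$, together with the change-of-variables formula for the volume forms whose Jacobian is $\alpha_1\cdots\alpha_m$; both the Jacobian and the operator norm of $T\varphi_A$ (and of its inverse) are bounded above and below uniformly because the $\alpha_i$ are, so $\|f\circ\varphi_A\|_{H^1(M_A)}$ and $\|f\|_{H^1(M)}$ are equivalent with constants $C_1,C_2$ depending only on $M$ and $M_A$ (equivalently on $\mathcal T_0$, since $M_A$ is fixed); (4) translate back through the Hodge-star identifications and the sign factor $(-1)^{k(m-k)}$, which is harmless, to obtain \eqref{H1relationship}; en route this also shows $i_A^*\tau\in H^1\Omega^m(M_A)$, since $f\circ\varphi_A\in H^1(M_A)$ whenever $f\in H^1(M)$ and $\varphi_A$ is a $C^\infty$ diffeomorphism.

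The step I expect to be the main obstacle is making precise the claim that the $H^1\Omega^m$ graph/Sobolev norm really does reduce cleanly to the scalar $H^1$ norm of the $\star$-proxy, and that the covariant derivative of an $m$-form transforms under $\varphi_A^*$ in a way controlled purely by the singular values $\alpha_i$ (and not by curvature terms or derivatives of $\alpha_i$ that are not obviously bounded). The resolution is that $\varphi_A = a|_{M_A}$ is as smooth as the signed distance function $\delta$, which on the tubular neighbourhood $U$ is $C^\infty$ with bounded derivatives of all orders (since $M$ is smooth and compact), so $\nabla(T\varphi_A)$ is bounded and the extra terms produced when differentiating the pullback of an $m$-form are uniformly controlled; for $m$-forms one may also bypass covariant derivatives entirely by working with the scalar coefficient relative to the Riemannian volume form, reducing everything to the scalar computation in step~(3). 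A minor technical point is to confirm that the constants can be absorbed into a dependence on $\mathcal T_0$ only: this holds because $M$ is fixed and $M_A$ — hence $\varphi_A$ and all the $\alpha_i$ — is determined once the initial triangulation is chosen, so no mesh-size dependence enters.
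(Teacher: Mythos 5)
Your proposal is correct and follows essentially the same route as the paper: the paper's (very terse) proof likewise invokes Lemma~\ref{doubleHodgeStar} to reduce the $k=m$ case of $i_A^*$ to the $k=0$ case of $\pi_A$, i.e.\ to the two-sided $H^1$ bound for the scalar pullback $f\mapsto f\circ\varphi_A$, which is exactly your steps (1)--(4). You simply supply the details (chain rule, uniform bounds on the singular values and on the derivatives of the normal projection $a|_{M_A}$) that the paper leaves implicit with the phrase ``with bounds introduced earlier.''
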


\begin{proof}
 Lemma \ref{doubleHodgeStar} shows, in the case $k=m$, that the bounds on $i^*_A$ are the same as those used for $\pi_A$ in the case $k=0$.  In the case $k=0$, $\pi_A$ maps $H^1 \Omega(M) \rightarrow H^1 \Omega(M_A)$, with bounds introduced earlier.
\end{proof}

Next we introduce a new interpolant, $I_{M_h}$ which is related to the canonical interpolant introduced earlier.

\begin{definition}
Let $I_h$ be the canonical projection operator on $\mathcal{T}_h$, a triangulation of $M_A$.  Then for $\tau \in H\Omega^k(M)$, we define $I_{M_h}$ as
\begin{equation*}
I_{M_h} \tau = i_A I_h (\pi_A \tau )
\end{equation*}
\end{definition}

\begin{lemma}\label{canProjMapped}  Suppose $\tau \in H^1\Omega^{k}(M)$, where $k = m-1$ or $k = m $.  Let $I_{M_h}$ be the altered canonical projection operator introduced above. Then 
 \begin{align}
 I_{M_h} d &= d I_{M_h}  
 \end{align}
\end{lemma}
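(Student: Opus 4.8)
The plan is to verify the commuting property $I_{M_h}d = dI_{M_h}$ by unwinding the definition $I_{M_h}\tau = i_A I_h(\pi_A\tau)$ and chaining together three commuting relations: that $i_A = \varphi_{A*}$ and $\pi_A = \varphi_A^*$ are Hilbert complex morphisms (hence commute with $d$, as recorded in Section~\ref{sec:GeoPrelim}), and that the canonical projection $I_h$ on the polygonal surface $M_A$ commutes with $d$ (Lemma~\ref{canProj}, second identity, applied on $M_A$ rather than a Euclidean domain). Concretely, for $\tau \in H\Omega^k(M)$ I would write
\begin{equation*}
I_{M_h}(d\tau) = i_A I_h(\pi_A d\tau) = i_A I_h(d\,\pi_A\tau) = i_A\, d\,(I_h \pi_A\tau) = d\,(i_A I_h \pi_A \tau) = d\,(I_{M_h}\tau),
\end{equation*}
where the second equality uses that $\pi_A = \varphi_A^*$ intertwines the exterior derivatives on $M$ and $M_A$ (pullback commutes with $d$), the third uses Lemma~\ref{canProj} on the triangulation $\mathcal{T}_h$ of $M_A$, and the fourth uses that $i_A = \varphi_{A*}$ intertwines the exterior derivatives on $M_A$ and $M$ (pushforward by a diffeomorphism commutes with $d$).

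The one point that needs a word of justification is the step $I_h(d\,\pi_A\tau) = d\,(I_h\pi_A\tau)$: Lemma~\ref{canProj} as stated in Section~\ref{sec:prelim} is phrased for a polyhedral domain $\Omega\subset\mathbb{R}^n$, whereas here $I_h$ acts on forms on the polygonal manifold $M_A$. I would simply remark that the canonical projection is defined element-by-element via the degrees of freedom of $\mathcal{P}_r\Lambda^k$ / $\mathcal{P}_r^-\Lambda^k$ on each simplex of $\mathcal{T}_h$, and the commuting-with-$d$ property is a purely local (indeed affine-invariant) fact about these spaces, so it transfers verbatim to a piecewise-flat triangulated surface; this is exactly the setting of \cite{AFW06,AFW10,HoSt10a}. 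One must also note that $\pi_A\tau \in H\Omega^k(M_A)$ whenever $\tau\in H\Omega^k(M)$, and that $d\pi_A\tau = \pi_A d\tau$ lies in $L^2\Omega^{k+1}(M_A)$, so that $I_h$ is applied to admissible arguments and $I_h\pi_A\tau$ lies in the domain of $d$; this is immediate from $\pi_A$ being a Hilbert complex isomorphism.

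I do not expect any genuine obstacle here — the statement is essentially a formal consequence of composing three maps each of which commutes with $d$. The only mild subtlety is bookkeeping: making sure the domains line up (a form on $M$, pulled back to $M_A$, interpolated, pushed forward to $M$) and that each "commutes with $d$" claim is invoked on the correct space. If one wanted the analogous statement for the other interpolants, the same three-step argument would apply with $I_h$ replaced by $\pi_h$ or $\Pi_h$, since each of those is also a cochain projection on $M_A$; but for the stated lemma only the canonical $I_h$ is needed.
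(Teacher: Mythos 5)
Your proposal is correct and follows essentially the same route as the paper's proof: unwind $I_{M_h} = i_A I_h \pi_A$ and chain the three commutation facts (pullback with $d$, the canonical projection $I_h$ with $d$ via Lemma~\ref{canProj}, and pushforward with $d$). The paper's argument is just a terser version of the same computation; your extra remark on transferring Lemma~\ref{canProj} to the piecewise-flat surface $M_A$ is a reasonable addition but not a departure in approach.
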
 
\begin{proof}
Using Lemma \ref{canProj} and the fact that the pull-back and push-forward commute with $d$, we have
\begin{align*}
I_{M_h} d &= i_A I_h \pi_A d \\
&= i_A I_h d \pi_A \\
&= d (i_A I_h \pi_A).
\end{align*}
\end{proof}

\begin{lemma}
Let $f \in H \Omega^m(M)$, and let $T \in \mathcal{ T}_h$.  Then
\begin{equation}
\int_{a(T)} ( f_h - I_{M_H} f_h ) = 0
\end{equation}
\end{lemma}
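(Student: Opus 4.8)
The plan is to reduce this to Lemma~\ref{PL1} (the flat-domain statement $\int_T (f_h - I_H f_h) = 0$) by transporting everything back to the approximating surface $M_A$ via the isomorphisms $i_A, \pi_A$, and then to exploit the compatibility of the Hodge star with the pull-back and integration.

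First I would unwind the definition $I_{M_H} f_h = i_A I_H(\pi_A f_h)$, and recall that for an $m$-form on $M$ the quantity $\int_{a(T)} \omega$ is, by the change-of-variables formula for the diffeomorphism $\varphi_A = a|_{M_A}$, exactly $\int_T \varphi_A^* \omega = \int_T \pi_A \omega$. Hence
\begin{equation*}
\int_{a(T)} (f_h - I_{M_H} f_h) = \int_T \pi_A\big(f_h - i_A I_H(\pi_A f_h)\big) = \int_T \big(\pi_A f_h - I_H(\pi_A f_h)\big),
\end{equation*}
where in the last step I use $\pi_A i_A = \mathrm{id}$ on $L^2\Omega(M_A)$. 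So the claim is equivalent to $\int_T (g_h - I_H g_h) = 0$ for $g_h := \pi_A f_h$. The subtlety here is that $f_h \in V_h^m(M)$ is a \emph{mapped} finite element form, so $g_h = \pi_A f_h \in \mathcal{P}_r\Omega^m(M_A)$ (or $\mathcal{P}^-_r\Omega^m(M_A)$) is a genuine polynomial $m$-form on the flat triangulation $\mathcal{T}_H$ of $M_A$; this is precisely the design of the discrete spaces in Section~\ref{sec:GeoPrelim}. Therefore Lemma~\ref{PL1} applies verbatim to $g_h$ on the element $T \in \mathcal{T}_H$, giving $\int_T (g_h - I_H g_h) = 0$, which completes the argument.

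The main obstacle I anticipate is making the first displayed identity fully rigorous — specifically, justifying that $\int_{a(T)}\omega = \int_T \pi_A\omega$ for $m$-forms. This is just the statement that the push-forward $i_A = \varphi_{A*}$ and the change of variables under the diffeomorphism $\varphi_A$ agree on top-degree forms, i.e. $\int_{\varphi_A(T)} \varphi_{A*}\eta = \int_T \eta$; equivalently $\int_{a(T)} \omega = \int_T \varphi_A^*\omega = \int_T \pi_A\omega$ since $\pi_A = \varphi_A^*$. One should also note that $I_H$ here denotes the canonical projection on the coarse triangulation $\mathcal{T}_H$ of $M_A$ (consistent with Lemma~\ref{PL1}), and that $a(T)$ for $T \in \mathcal{T}_H$ is the corresponding element of $a(\mathcal{T}_H)$, so the degrees-of-freedom structure underlying Lemma~\ref{PL1} is the one genuinely being used. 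No new estimates are needed; the result is purely a consequence of the change-of-variables formula plus the already-established flat-domain Lemma~\ref{PL1}.
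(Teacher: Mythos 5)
Your proposal is correct and takes essentially the same approach as the paper: both arguments combine the invariance of the integral of a top-degree form under the diffeomorphism $a$ (so $\int_{a(T)}\omega = \int_T \pi_A\omega$), the identity $\pi_A i_A = \mathrm{id}$, and the flat-domain Lemma~\ref{PL1} applied to $\pi_A f_h \in \Lambda^m_h(M_A)$. The only cosmetic difference is direction — you pull the target integral back to $M_A$, while the paper starts from the identity on $M_A$ and pushes it forward to $M$.
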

\begin{proof}
From \eqref{discInt} we know
\begin{equation*}
\int_T ( I_h( \pi_A f ) - I_H (I_h \pi_A f )) = 0
\end{equation*}
and since $k=m$, we have 
\begin{equation*}
 \int_{a(T)} ( i_A I_h( \pi_A f ) - i_A I_H (I_h \pi_A f )) = 0
\end{equation*}
yielding
\begin{equation*}
 \int_{a(T)}  I_{M_h} f  - (i_A I_H \pi_A )I_{M_h} f   = 0
\end{equation*}

\end{proof}

\subsection{Quasi-Orthogonality}
\label{sec:quasi}

The main difficulty for mixed finite element methods is the lack of minimization principle, and thus the failure of orthogonality.  In \cite{HMS} results from \cite{CHX} are generalized, and a quasi-orthogonality property is proven using the fact that $\sigma - \sigma_h$ is orthogonal to the subspace $\mathfrak{Z}^{n-1}_h \subset H\Lambda^{n-1}_h(\Omega)$.  In this section we show that this same orthogonality result holds for finite elements spaces mapped to the smooth surface from the approximating surface. 

Solutions of Hodge Laplace problems on nested triangulations $\mathcal{T}_h$ and $\mathcal{T}_H$ will frequently be compared.  Nested in the sense that $\mathcal{T}_h$ is a refinement of $\mathcal{T}_H$.  For a given $f \in L^2 \Omega^m(M)$, let $\mathcal{L}^{-1}f$ denote the  solutions of \eqref{surfaceHL}.  Let $\mathcal{L}^{-1}_h f_h$ and $\mathcal{L}^{-1}_H f_H$  denote the solutions to the discrete problems on $a(\mathcal{T}_h)$ and $a(\mathcal{T}_H)$ respectively.  Set the following triples, $( u, \sigma, p )= \mathcal{L}^{-1}f$, $( u_h, \sigma_h, p_h )= \mathcal{L}^{-1}_h f_h$, $( \tilde{u}_h, \tilde{\sigma}_h, \tilde{p}_h )= \mathcal{L}^{-1}_h f_H$ and $( u_H, \sigma_H, p_H )= \mathcal{L}^{-1}_H f_H$.  The following analysis deals with the $\mathfrak{B}$ problem and thus the harmonic component will be zero in each of these solutions.  When we are only interested in $\sigma$ we will abuse this notation by writing $\sigma = \mathcal{L}^{-1} f $.

\begin{lemma}   
 Given $f \in L^2 \Omega^m(M)$ in $\mathfrak{B}$, and two nested triangulations $\mathcal{T}_h$ and $\mathcal{T}_H$, then
\begin{equation}\label{nestTri}
\langle \sigma - \sigma_h, \tilde{\sigma}_h - \sigma_H \rangle_M = 0.
\end{equation}

\end{lemma}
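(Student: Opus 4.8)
The plan is to exploit the fact that all three solutions $\sigma, \sigma_h, \sigma_H$ are $(k-1)$-forms ($k=m$ here, so $(m-1)$-forms) and that the key Galerkin orthogonality $\langle \sigma - \sigma_h, \tau \rangle_M = 0$ holds for all $\tau$ in the discrete space $V_h^{k-1}(M_A)$ mapped to $M$ — in particular for $\tau = \tilde\sigma_h - \sigma_H$, provided that this difference lies in the coarse-then-refined discrete space, which it does since $\mathcal{T}_h$ refines $\mathcal{T}_H$. The first step is to extract from the discrete systems \eqref{rw2} (on $a(\mathcal{T}_h)$ with data $f$) and the analogous system on $a(\mathcal{T}_H)$ with data $f_H$ the variational characterizations of $\sigma_h$ and of $\tilde\sigma_h, \sigma_H$. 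The continuous problem \eqref{surfaceHL} gives $\langle \sigma, \tau\rangle_M = \langle d\tau, u\rangle_M$ for all $\tau \in H\Omega^{m-1}(M)$; the discrete problem gives $\langle \sigma_h, i_A\tau'\rangle_M = \langle d(i_A\tau'), u_h\rangle_M$ for all $\tau' \in V_h^{m-1}(M_A)$, and similarly for $(\tilde\sigma_h, \tilde u_h)$ with the same refined space and for $(\sigma_H, u_H)$ with the coarse space.

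The second step is to subtract. Since the refined space contains the coarse space ($V_H^{m-1}(M_A) \subset V_h^{m-1}(M_A)$ because $\mathcal{T}_h$ refines $\mathcal{T}_H$), the difference $\tilde\sigma_h - \sigma_H$ is itself a mapped element of the refined discrete space $i_A V_h^{m-1}(M_A)$. Testing the continuous equation and the $a(\mathcal{T}_h)$-discrete equation against this common test function and subtracting yields
\begin{equation*}
\langle \sigma - \sigma_h, \tilde\sigma_h - \sigma_H \rangle_M = \langle d(\tilde\sigma_h - \sigma_H), u - u_h \rangle_M.
\end{equation*}
The third step is to show the right-hand side vanishes. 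Here one uses that in the $\mathfrak{B}$ problem with $k=m$ one has $d\tilde\sigma_h = d\sigma_H$: indeed, both $\tilde\sigma_h$ and $\sigma_H$ are the $\delta$-components of the respective solutions, and the structure of the discrete $\mathfrak{B}$ problem — together with the fact that $d$ applied to $(m-1)$-forms lands in the top space and the middle equation of \eqref{rw2} with $v$ ranging over the appropriate space forces $d\sigma_h = f_h$ (modulo the $d u$ contribution, which vanishes since $du = 0$ for $u$ a top form) — gives $d\tilde\sigma_h = P_h^{\mathrm{top}} f_H = d\sigma_H$ as both equal the $L^2$-projection of $f_H$ onto the same discrete top-form space $\Lambda_h^m$. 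Hence $d(\tilde\sigma_h - \sigma_H) = 0$ and the right-hand side is zero.

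The main obstacle I anticipate is the bookkeeping around the mapped spaces and the $i_A$-weighted inner products: one must be careful that "$\tilde\sigma_h - \sigma_H \in i_A V_h^{m-1}(M_A)$" is legitimate as a test function in the $a(\mathcal{T}_h)$-discrete equation (it is, by the inclusion of coarse into refined spaces and linearity), and that the identity $d\tilde\sigma_h = d\sigma_H$ really does hold — this rests on the observation recorded in the paper that for $k=m$ on a surface without boundary the $\mathfrak{B}$ problem essentially reduces to a problem where $d$ of the $\sigma$-variable equals a discrete $L^2$-projection of the data, and that both refined and coarse solutions see the same data $f_H$ projected into the same top-form space $\Lambda_h^m$. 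A secondary point requiring care is that $du = 0$ and $du_h = 0$ since $u, u_h$ are top-degree forms, so the $\langle du, dv\rangle$ terms drop out cleanly; this is exactly the simplification the authors flag as making the $k=m$ case tractable. Once these two facts are in hand, the result follows from two lines of algebra.
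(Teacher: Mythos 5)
Your proposal is correct and follows essentially the same route as the paper (which defers to the argument in the cited reference): the first equations of the continuous and fine-mesh mixed systems give $\langle \sigma - \sigma_h, \tau_h\rangle = \langle d\tau_h, u - u_h\rangle$ for any $\tau_h$ in the mapped fine space, and nestedness plus $d\tilde{\sigma}_h = d\sigma_H = f_H$ (valid since $du, du_h$ vanish for top-degree forms and the harmonic parts are zero for the $\mathfrak{B}$ problem) shows $\tilde{\sigma}_h - \sigma_H \in \mathfrak{Z}_h^{m-1}$, so the right-hand side vanishes. The only cosmetic imprecision is that $d\sigma_H$ is a priori the projection of $f_H$ onto the \emph{coarse} top-form space, not the fine one, but since $f_H$ already lies in the coarse space both projections equal $f_H$ and the conclusion stands.
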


\begin{proof}
See \cite{HMS}.
\end{proof}

The next result is similar to Theorem 4.2 in \cite{HMS}.  We present the proof in order to clarify the impact of the surface mapping.


\begin{theorem}\label{quasi0}  Given $f \in L^2 \Omega^m(M)$ in  $\mathfrak{B}$, and two nested triangulations $\mathcal{T}_h$ and $\mathcal{T}_H$, then

\begin{equation}\langle \sigma - \sigma_h, \sigma_h - \sigma_H \rangle \le \sqrt{C_0}\|\sigma - \sigma_h \| \textit{osc}( \pi_A f_h,  \mathcal{T}_H), \end{equation}

and for any $\delta > 0$,
\begin{equation}
(1-\delta)\|\sigma - \sigma_h\|^2 \le \| \sigma - \sigma_H \|^2 - \| \sigma_h - \sigma_H \|^2 + \frac{C_0}{\delta}\textit{osc}^2( \pi_A f_h, \mathcal{T}_H ). \label{chx32}
\end{equation}

\end{theorem}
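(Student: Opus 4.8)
The plan is to follow the argument of Theorem 4.2 in \cite{HMS}, adapting each step to account for the surface mapping $i_A$ and the interpolant $I_{M_h}$. Write $e := \sigma - \sigma_h$ and $E := \sigma_h - \sigma_H$, and decompose $E = E_1 + E_2$ where we want to extract the part of $E$ that is $L^2$-orthogonal to $e$. First I would invoke the nested-triangulation orthogonality \eqref{nestTri}, $\langle \sigma - \sigma_h, \tilde\sigma_h - \sigma_H\rangle_M = 0$, so that $\langle e, E\rangle = \langle \sigma - \sigma_h, \sigma_h - \tilde\sigma_h\rangle_M$. Thus the whole problem reduces to estimating $\langle \sigma - \sigma_h, \sigma_h - \tilde\sigma_h\rangle_M$, i.e. comparing the two discrete solutions on the finer mesh with data $f_h$ versus $f_H$.

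Next I would exploit that $\sigma_h - \tilde\sigma_h \in \mathfrak{B}^{m-1}_h \subset V^{m-1}_h$ (both solve $\mathfrak{B}$-type problems on the same fine complex, so their $\sigma$-components lie in the range of $d$ composed through $i_A$), together with the fact established in Section~\ref{sec:quasi} that $\sigma - \sigma_h$ is orthogonal to $\mathfrak{Z}^{m-1}_h$ after mapping through $i_A$. This lets me replace $\sigma_h - \tilde\sigma_h$ inside the inner product by $(I - i_A I_h \pi_A)(\sigma_h - \tilde\sigma_h)$ or, more to the point, relate $\langle \sigma - \sigma_h, \sigma_h - \tilde\sigma_h\rangle$ to $\langle \sigma - \sigma_h, (\text{id} - i_A I_H\pi_A)(\text{something})\rangle$ and then to the data difference $f_h - f_H$ through the defining equations of the discrete problems. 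Using the first equation of the discrete system for both $\sigma_h$ and $\tilde\sigma_h$ and subtracting, $\langle \sigma_h - \tilde\sigma_h, \tau\rangle = \langle d(i_A\tau), u_h - \tilde u_h\rangle$ for all $\tau \in V^{m-1}_h$; combined with the second equations this forces the difference to be controlled by $f_h - f_H = f_h - I_H f_h$ (in the $k=m$ case, using the fact $f_H = P_H f_h$ when comparing across the nesting). The interpolation identity from Lemma~\ref{PL1}/\ref{discInt}, pulled to the surface via the lemma $\int_{a(T)}(f_h - I_{M_H}f_h) = 0$, is what converts the $f_h - f_H$ term into the oscillation $\mathrm{osc}(\pi_A f_h, \mathcal{T}_H)$: one writes $f_h - f_H$ against a function, subtracts its elementwise mean (legitimate by the zero-integral property), and applies Cauchy--Schwarz elementwise with the local approximation estimate \eqref{h1interp} supplying the $h_T$ factors. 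The constant $C_0$ absorbs the shape-regularity constant, the norm-equivalence constants $C_1, C_2$ from Lemma~\ref{H1rLemma} for the $i_A$/$i_A^*$ maps, and the interpolation constant $C$. This yields the first displayed inequality
\[
\langle \sigma - \sigma_h, \sigma_h - \sigma_H\rangle \le \sqrt{C_0}\,\|\sigma - \sigma_h\|\,\mathrm{osc}(\pi_A f_h, \mathcal{T}_H).
\]

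For the second inequality I would expand $\|\sigma - \sigma_H\|^2 = \|(\sigma - \sigma_h) + (\sigma_h - \sigma_H)\|^2 = \|\sigma - \sigma_h\|^2 + 2\langle \sigma - \sigma_h, \sigma_h - \sigma_H\rangle + \|\sigma_h - \sigma_H\|^2$, then substitute the bound just obtained and apply Young's inequality $2\sqrt{C_0}\,ab \le \delta a^2 + \tfrac{C_0}{\delta}b^2$ with $a = \|\sigma - \sigma_h\|$, $b = \mathrm{osc}(\pi_A f_h, \mathcal{T}_H)$, and rearrange. This is purely algebraic once the first estimate is in hand.

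The main obstacle I anticipate is the bookkeeping around the mapping $i_A$: unlike in \cite{HMS}, the discrete spaces are not subspaces of $H\Omega^{m-1}(M)$, so the orthogonality "$\sigma - \sigma_h \perp \mathfrak{Z}^{m-1}_h$" must be interpreted in the mapped sense, and every time I integrate by parts or invoke the defining equations of the discrete problem I pick up factors of $i_A, i_A^*, \pi_A$ whose $H^1$-boundedness (Lemma~\ref{H1rLemma}) and commutation with $d$ (Lemma~\ref{canProjMapped}) must be tracked carefully so that the oscillation appears with $\pi_A f_h$ rather than $f_h$. Getting the interpolation estimate \eqref{h1interp} to transfer cleanly from $\mathcal{T}_H$ on $M_A$ to $a(\mathcal{T}_H)$ on $M$ — with constants depending only on $\mathcal{T}_0$ and the geometry — is the step where the surface-specific work really lives; everything else mirrors \cite{HMS} line for line.
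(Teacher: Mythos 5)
Your proposal follows the paper's proof: the orthogonality \eqref{nestTri} reduces the inner product to $\langle \sigma-\sigma_h,\, \sigma_h-\tilde{\sigma}_h\rangle$, Cauchy--Schwarz together with the bound $\|\sigma_h-\tilde{\sigma}_h\|\le C\,\mathrm{osc}(\pi_A f_h,\mathcal{T}_H)$ gives the first estimate, and Young's inequality yields \eqref{chx32}. The only difference is that the middle bound is precisely the discrete stability result, Theorem \ref{dStab} (whose proof rests on Lemma \ref{umpu}), which the paper cites as a separate ingredient rather than re-deriving inline as you sketch.
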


\begin{proof}
By \eqref{nestTri} we have
\begin{align*}
\langle \sigma - \sigma_h, \sigma_h - \sigma_H \rangle &= \langle \sigma - \sigma_h, \sigma_h - \tilde{\sigma}_h \rangle + \langle \sigma - \sigma_h, \tilde{\sigma_h} - \sigma_H \rangle \\
&= \langle \sigma - \sigma_h, \sigma_h - \tilde{\sigma}_h \rangle \\
&\le \| \sigma - \sigma_h \| \| \sigma_h - \tilde{\sigma}_h \| .
\end{align*}

And then by the discrete stability result, Theorem \ref{dStab}, we have
\begin{equation*}
\le \sqrt{C_0}\|\sigma - \sigma_h \| \textrm{osc}( \pi_A f_h, \mathcal{T}_H ).
\end{equation*}
\eqref{chx32} follows standard arguments and is identical to \cite{CHX} (3.4)
\end{proof}

\subsection{Continuous and Discrete Stability}
\label{sec:stab}

In this section we will prove stability results for approximate solutions to the $\sigma$ portion of the Hodge Laplace problem.  Theorem \ref{contS} gives a stability result for particular solutions of the Hodge de Rham problem that will be useful in bounding the approximation error in Section \ref{sec:bounds}.  Theorem \ref{dStab} will prove the discrete stability result used in Theorem \ref{quasi0}.  These proofs follow the same structure as \cite{HMS}, with additional steps that take care of the mapping between the surfaces.

\begin{theorem}\label{contS}\textit{ (Continuous Stability Result)}
Given $f \in L^2 \Omega^m(M)$ in $\mathfrak{B}$, let $\mathcal{T}_h$ be a triangulation of $M_A$.  Set $(\sigma, u, p ) = \mathcal{L}^{-1}f$ and $(\tilde{\sigma}, \tilde{u}, \tilde{p} ) = \mathcal{L}^{-1}f_h$,  then

\begin{equation} \| \sigma - \tilde{\sigma} \| \le C \textit{osc}( \pi_A f, \mathcal{T}_h). \end{equation}
 
\end{theorem}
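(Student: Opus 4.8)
The plan is to compare the two continuous mixed Hodge--Laplace solutions $(\sigma, u, p) = \mathcal{L}^{-1}f$ and $(\tilde\sigma,\tilde u,\tilde p) = \mathcal{L}^{-1}f_h$ directly on the smooth surface $M$, where $f_h = P_h f$ is the $L^2$-projection of the data onto the mapped discrete space. Since both triples solve \eqref{surfaceHL} with the same harmonic component (zero, as we are in the $\mathfrak{B}$ setting) but different right-hand sides, their difference $(\sigma - \tilde\sigma, u - \tilde u, p - \tilde p)$ solves the mixed system with data $f - f_h$. First I would write down these difference equations explicitly: testing the first equation of \eqref{surfaceHL} with $\tau = \sigma - \tilde\sigma$ and the second with $v = u - \tilde u$, then adding, the cross terms $\langle d(\sigma-\tilde\sigma), u - \tilde u\rangle$ cancel in the usual FEEC manner, leaving an identity of the form $\|\sigma - \tilde\sigma\|^2 + \|d(u - \tilde u)\|^2 = \langle f - f_h, u - \tilde u\rangle$ (the harmonic terms dropping out by the side conditions). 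This is the standard energy identity for the mixed formulation and mirrors exactly the corresponding step in \cite{HMS}.

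Next I would exploit the orthogonality $f - f_h \perp V_h^m(M_A)$ pulled through $i_A$: because $f - f_h$ is $L^2$-orthogonal to the mapped discrete space, we can subtract from $u - \tilde u$ any element of that space — in particular an appropriate interpolant — before applying Cauchy--Schwarz. Using the mapped canonical interpolant $I_{M_h}$ of Lemma \ref{canProjMapped} together with the containment $\int_{a(T)}(f_h - I_{M_H}f_h) = 0$-type cancellation and the local estimate \eqref{h1interp} transported to the surface via Theorem \ref{HS41} (with $p = q = 2$) and Lemma \ref{H1rLemma}, one bounds $\langle f - f_h, u - \tilde u\rangle$ elementwise by $\sum_T \|h_T(f - f_h)\|_{a(T)} \, h_T^{-1}\|(I - I_{M_h})(u - \tilde u)\|_{a(T)}$, which is $\lesssim \textrm{osc}(\pi_A f, \mathcal{T}_h)\,\|u - \tilde u\|_{H^1}$ up to the geometric constants controlling the equivalence of norms on $M$ and $M_A$. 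The $H^1$ (or $V$-) control of $u - \tilde u$ then comes from the Poincar\'e inequality \eqref{poincare} applied on $\mathfrak{Z}^{\perp}$ — since $u - \tilde u$ can be assumed (or projected to be) in the appropriate complement — combined with the energy identity, closing the estimate and absorbing $\|\sigma - \tilde\sigma\|$ back into the left side.

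The main obstacle is the geometric bookkeeping rather than the analytic core: one must verify that every norm equivalence, interpolation bound, and orthogonality relation survives the pullback/pushforward by $\varphi_A$, that the constants depend only on $\mathcal{T}_0$ (and the fixed surface geometry) and not on the refinement level, and that the oscillation is correctly measured as $\textrm{osc}(\pi_A f, \mathcal{T}_h)$ on the flat approximating surface $M_A$ where the mesh actually lives. The uniform boundedness of the singular values $\alpha_i$ on the compact $M_A$ (Theorem \ref{HS41}) and the $H^1$-boundedness of $i_A^*$ (Lemma \ref{H1rLemma}) are precisely the tools that make this transfer work; the remaining steps are routine and parallel \cite{HMS} line for line.
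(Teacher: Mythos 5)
Your proposal follows essentially the same route as the paper: the energy identity $\|\sigma-\tilde\sigma\|^2=\langle f-f_h,\,u-\tilde u\rangle$ (the $\langle d(u-\tilde u),d(u-\tilde u)\rangle$ term is vacuous since $k=m$), the use of $L^2$-orthogonality of $f-f_h$ to the discrete space to subtract an interpolant of $u-\tilde u$, the local estimate \eqref{h1interp} transported between $M$ and $M_A$ via Lemma \ref{H1rLemma}, and a final division by $\|\sigma-\tilde\sigma\|$. The geometric bookkeeping you describe is exactly what the paper does: it works elementwise with $\langle \pi_A(f-f_h),\,i_A^*v-I_h(i_A^*v)\rangle_T$ on the flat surface, which is why the oscillation appears as $\textit{osc}(\pi_A f,\mathcal{T}_h)$.

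The one step you should repair is the claim that the $H^1$ control of $u-\tilde u$ comes from applying the Poincar\'e inequality to $u-\tilde u$ after it is ``assumed (or projected to be)'' in $\mathfrak{Z}^{\perp}$. For $k=m$ every $m$-form is closed, so $\mathfrak{Z}^{m\perp_V}$ contains no part of $u-\tilde u\in\mathfrak{B}^m$, and Poincar\'e applied to $u-\tilde u$ directly gives nothing since $d(u-\tilde u)=0$. The correct mechanism, as in the paper, is duality in degree $m-1$: one piece of the $H^1$ norm is $\|\delta(u-\tilde u)\|=\|\sigma-\tilde\sigma\|$ outright, and for the $L^2$ piece one writes $\|u-\tilde u\|=\langle u-\tilde u,\,d\tau\rangle$ for some $\tau\in\mathfrak{Z}^{(m-1)\perp}$ with $\|d\tau\|=1$, uses the first equation of the mixed system to convert this to $\langle\sigma-\tilde\sigma,\tau\rangle$, and applies Poincar\'e to the $(m-1)$-form $\tau$ to get $\|\tau\|\le c_P$. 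With that substitution your argument matches the paper's proof.
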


\begin{proof}
The harmonic terms are vacuous,  thus
\begin{equation*}
\|\sigma - \tilde{\sigma} \|_M^2 = \langle d(\sigma - \tilde{\sigma}) , u - \tilde{u} \rangle_M = \langle f - f_h, u - \tilde{u} \rangle_M = \langle \pi_A(f - f_h), i^*_A ( u - \tilde{u} ) \rangle_{M_A}.
\end{equation*}
Let $v = u-\tilde{u}$.  Since $v \in \mathfrak{B}^k $ and $\| \delta v \| = \|$grad $v \| = \| \sigma - \tilde{\sigma} \| $, we have $v \in H^1 \Omega^m(M)$.  Restricting $v$ to an element $a(T) \in a(\mathcal{T}_h)$, we have $v \in H^1 \Omega^m(a(T))$, thus
\begin{align*}
\| \sigma -\tilde{\sigma} \|^2  = \langle f - f_h, v \rangle &=\displaystyle\sum\limits_{T \in \mathcal{T}_h}  \langle \pi_A (f - f_h) , i_A^* v  \rangle_{T}. \\
&= \displaystyle\sum\limits_{T \in \mathcal{T}_h}  \langle \pi_A  f -  \pi_A f_h , i_A^* v - I_h (i_A^* v )   \rangle_{T}. 
\end{align*}
Applying \eqref{h1interp} and then \eqref{H1rLemma},
\begin{align*}
&\le C \displaystyle\sum\limits_{T \in \mathcal{T}_h } h_T \|\pi_A f- \pi_A f_h \|_{T} \| i_A^* v\|_{H^1\Lambda^n(T)} \\
&\le C \displaystyle\sum\limits_{T \in \mathcal{T}_h } h_T \|\pi_A f- \pi_A f_h \|_{T} \|  v\|_{H^1\Lambda^n(a(T))} \\
&= C \displaystyle\sum\limits_{T \in \mathcal{T}_h}  h_T \|\pi_A f- \pi_A f_h \|_{T}( \| u - \tilde{u} \|_{a(T)} +  \| \delta( u - \tilde{u} )  \|_{a(T)} ) \\
&\le C (\displaystyle\sum\limits_{T \in \mathcal{T}_h}   \|h_T (\pi_A f- \pi_A f_h) \|^2_{T} )^{1/2} (\displaystyle\sum\limits_{T \in T_h} ( \| u - \tilde{u} \|_{a(T)} +  \| \delta( u - \tilde{u} )  \|_{a(T)} )^2)^{1/2},
\end{align*}
and $v \in H^1\Omega^m(M)$ allows us to to combine terms of the summation,
\begin{equation*}
\le C (\displaystyle\sum\limits_{T \in \mathcal{T}_h}   \|h_T (\pi_A f- \pi_A f_h) \|^2_{T} )^{1/2} ( \| u - \tilde{u} \|_M +  \| \delta( u - \tilde{u} )  \|_M ).
\end{equation*}
Since $u - \tilde{u} \in \mathfrak{B}^k$, $\| u - \tilde{u} \| = \langle u- \tilde{u}, d\tau \rangle$ for some $\tau \in \mathfrak{Z}^{\perp}$ with $\|d\tau \| = 1$, thus
\begin{equation*}
= C (\displaystyle\sum\limits_{T \in \mathcal{T}_h}  \|h_T (\pi_A f- \pi_A f_h) \|^2_{T} )^{1/2} \langle(  \sigma - \tilde{\sigma} )  , \tau \rangle_{M} +  \|  \sigma - \tilde{\sigma}  \|_M).
\end{equation*}
Then applying Poincar\'e on $\tau$:
\begin{equation*}
= C\|  \sigma - \tilde{\sigma}  \| (\displaystyle\sum\limits_{T \in \mathcal{T}_h}  \|h_T (\pi_A f- \pi_A f_h) \|^2_{T} )^{1/2}  .
\end{equation*}
Divide through by $\|  \sigma - \tilde{\sigma}  \|$ to complete proof.
\end{proof}

$\linebreak$
The following is Lemma 4 in \cite{DH}, and is a special case of Theorem 1.5 of \cite{MMM}.  It is related to the bounded invertibility of $d$, and will be an important tool in proving discrete stability.

\begin{lemma}\label{DHL4}  Assume that B is a bounded Lipschitz domain in $\mathbb{R}^n$ that is homeomorphic to a ball.  Then the boundary value problem d$\varphi = g \in  L_2\Lambda^k(B)$ in $B, \textit{tr } \varphi = 0$ on $\partial B $ has a solution $\varphi \in H^1_0\Lambda^{k-1}(B) $ with $\| \varphi\|_{H^1\Lambda^{k-1}(B) } \le C\|g\|_B$ if and only if dg = 0 in B, and in addition, \textit{tr} g = 0 on $\partial$B if 0 $\le k \le n-1$ and $\int_Bg =0$ if $k=n$ .
\end{lemma}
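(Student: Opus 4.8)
The plan is to treat the two implications separately, since the hard content lives entirely in the ``if'' direction. The ``only if'' (necessity) direction is immediate from the elementary calculus of differential forms. If $\varphi \in H^1_0\Lambda^{k-1}(B)$ satisfies $d\varphi = g$, then $dg = d(d\varphi) = 0$ because $d \circ d = 0$. For the boundary data, the trace operator commutes with $d$, i.e. $\operatorname{tr}_{\partial B}(d\varphi) = d_{\partial B}(\operatorname{tr}_{\partial B}\varphi)$; since $\varphi \in H^1_0\Lambda^{k-1}(B)$ has $\operatorname{tr}_{\partial B}\varphi = 0$, this forces $\operatorname{tr}_{\partial B} g = 0$ whenever the trace is a genuine object, that is for $0 \le k \le n-1$. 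When $k = n$ the trace of an $n$-form on the $(n-1)$-dimensional boundary vanishes identically, so instead one applies Stokes' theorem: $\int_B g = \int_B d\varphi = \int_{\partial B}\operatorname{tr}_{\partial B}\varphi = 0$. (The degenerate case $k=0$, where $\varphi$ would be a $(-1)$-form, correctly reduces to $g \equiv 0$.)

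For the ``if'' (sufficiency) direction I would construct $\varphi$ by an explicit homotopy (Bogovski{\u\i}-type) operator, which is where the hypothesis that $B$ is homeomorphic to a ball is used. First reduce to the model geometry: a bounded Lipschitz domain is a finite union $B = \bigcup_{j=1}^N U_j$ of bounded Lipschitz subdomains, each star-shaped with respect to an open ball. On such a star-shaped piece the Bogovski{\u\i}/Poincar\'e homotopy operator $R : L^2\Lambda^k(U_j) \to H^1\Lambda^{k-1}(U_j)$ is an explicit singular-integral operator satisfying $dR + Rd = \operatorname{Id}$ in the relevant degrees, mapping into forms with vanishing trace, with $\|R\omega\|_{H^1} \le C\|\omega\|_{L^2}$ and $C$ depending only on the geometry of $U_j$; in top degree it builds in the moment condition $\int_{U_j}\omega = 0$. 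Applying $R$ to a closed $g$ produces a local $H^1_0$ primitive, and these are assembled into a global one by a partition-of-unity / Mayer--Vietoris induction on $N$, at each step subtracting from the accumulated error a closed form supported in an overlap and annihilating it with the homotopy operator on a star-shaped piece containing that overlap. Because $B$ is homeomorphic to a ball, the relative de Rham cohomology groups $H^k(\bar B, \partial\bar B; \mathbb{R})$ that could obstruct this vanish for $0 \le k \le n-1$, while $H^n(\bar B, \partial\bar B; \mathbb{R}) \cong \mathbb{R}$ is detected precisely by $\omega \mapsto \int_B \omega$; hence the extra hypothesis $\int_B g = 0$ removes the only obstruction in top degree. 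The estimate $\|\varphi\|_{H^1\Lambda^{k-1}(B)} \le C\|g\|_B$ survives the finitely many patching steps with $C$ controlled by the Lipschitz character of $B$. This is exactly the construction carried out in \cite{MMM} (Theorem~1.5) and in \cite{DH} (Lemma~4), to which I would defer for the remaining routine details.

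The main obstacle is obtaining the \emph{global $H^1$} bound on a general Lipschitz domain rather than merely on a star-shaped one. A purely functional-analytic argument --- closed range of $d$ on the $L^2$ domain complex together with the Poincar\'e inequality \eqref{poincare} --- yields a primitive only in the graph norm $(\|\varphi\|^2 + \|d\varphi\|^2)^{1/2}$, which does not control the full gradient, and up-to-the-boundary $H^1$ elliptic regularity for the associated Hodge system fails on generic Lipschitz domains. This is precisely why the Bogovski{\u\i} route is needed: the missing derivative is gained directly from the explicit integral kernel on each star-shaped piece, and the careful bookkeeping of constants through the gluing --- the technical heart of \cite{MMM} --- is what promotes the local estimates to the global bound asserted in the lemma.
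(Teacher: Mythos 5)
The paper does not prove this lemma at all: it imports it verbatim, noting only that it is Lemma~4 of \cite{DH} and a special case of Theorem~1.5 of \cite{MMM}. Your proposal is consistent with that --- the easy necessity direction ($d\circ d=0$, commutation of trace with $d$, Stokes in top degree) is correct, and your sketch of sufficiency via Bogovski{\u\i}-type homotopy operators on star-shaped pieces glued over a Lipschitz domain, with the relative cohomology $H^n(\bar B,\partial B)\cong\mathbb{R}$ accounting for the moment condition $\int_B g=0$, accurately reflects the structure of the argument in the very references the paper cites and to which you also defer for the technical patching details.
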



The next lemma is an intermediate step in proving the discrete stability result.  The general structure follows \cite{CHX} and applies Lemma \ref{DHL4} in order to find a sufficiently smooth function that is essentially a bounded inverse of $d$ for the approximation error of $u_h$ on $\mathcal{T}_H$.

In \cite{HMS}, Lemma \ref{PL1} is applied to shape regular polygonal elements, and thus the multiplicative constant can be bounded.  In this case, however, the elements are not necessarily polygonal, and thus we are forced to map the proof to $M_A$, where the regularity is clear, and then map back to $M$. 

\begin{lemma}\label{umpu} Let $\mathcal{T}_h$, $\mathcal{T}_H$ be nested conforming triangulations and let $\sigma_h, \sigma_H$ be the respective solutions to \eqref{DHL} with data $f \in L^2\Omega^m(M)$ in $\mathfrak{B}$. Then for any $T \in \mathcal{T}_H$    

\begin{equation} \| I_h i^*_A u_h -I_{H} i^*_A u_h \|_{T} \le \sqrt{C_0}  h_T \|  \sigma_h \|_{a(T)}. \end{equation}
\end{lemma}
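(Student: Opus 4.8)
The plan is to carry out the geometry‑sensitive part of the argument on the polyhedral surface $M_A$, where $T$ is a genuine shape‑regular simplex, and only to transport the resulting bound to $M$ through the uniformly bounded isomorphism $i_A$. Set $w := (I_h - I_H)\,i^*_A u_h$ restricted to $T \in \mathcal{T}_H$. Since $\mathcal{T}_h$ refines $\mathcal{T}_H$ we have $w \in \Lambda^m_h(M_A)|_T$, and because the canonical projections on top‑degree forms reproduce the integral of a form over every coarse‑mesh element (this is exactly the computation behind Lemma~\ref{PL1}), $\int_T w = \int_T (I_h - I_H)\,i^*_A u_h = 0$. Viewing $T$ as a bounded Lipschitz domain homeomorphic to a ball, Lemma~\ref{DHL4} applies in the case $k = m$ (whose hypothesis is precisely $\int_T w = 0$) and produces $\psi \in H^1_0\Lambda^{m-1}(T)$ with $d\psi = w$ and $\|\psi\|_{H^1\Lambda^{m-1}(T)} \le C\|w\|_T$; a scaling argument on the shape‑regular reference simplex converts this into $\|\psi\|_T \le \sqrt{C_0}\,h_T\|w\|_T$ and $\|\psi\|_{H^1\Lambda^{m-1}(T)} \le \sqrt{C_0}\|w\|_T$, with $C_0$ depending only on $\mathcal{T}_0$.

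Next I turn $\psi$ into a \emph{discrete}, $T$‑supported right inverse of $d$. Extend $\psi$ by zero to all of $M_A$, which is legitimate since $\operatorname{tr}_{\partial T}\psi = 0$, and set $\tau := I_h\psi \in V^{m-1}_h(M_A)$. By the commuting property of the canonical projection (Lemma~\ref{canProj}) together with $w \in \Lambda^m_h(M_A)|_T$, we get $d\tau = I_h\,d\psi = I_h w = w$; since $I_h$ preserves vanishing traces, $\tau$ is supported in $\overline T$, and the interpolation estimate of Lemma~\ref{canProj} gives $\|\tau\|_T \le \|\psi\|_T + \|(I - I_h)\psi\|_T \le \sqrt{C_0}\,h_T\|w\|_T$. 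Put $\chi := (I_h - I_H)\tau = \tau - I_H\tau \in V^{m-1}_h(M_A)$; it is supported in $\overline T$, has vanishing trace on $\partial T$, and standard interpolation stability (again Lemma~\ref{canProj}) gives $\|\chi\|_T \le \sqrt{C_0}\,h_T\|w\|_T$. Using $d\tau = w$, the fact that the top‑degree canonical projection is the local $L^2$‑projection, and the commuting property for $I_H$, one computes
\begin{align*}
\|w\|_T^2 = \langle w, d\tau\rangle_T &= \langle I_h i^*_A u_h,\, d\tau\rangle_T - \langle I_H i^*_A u_h,\, d\tau\rangle_T \\
&= \langle i^*_A u_h,\, d\tau\rangle_T - \langle i^*_A u_h,\, d I_H\tau\rangle_T = \langle i^*_A u_h,\, d\chi\rangle_{M_A},
\end{align*}
the last equality because $\chi$ is supported in $\overline T$. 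By the defining relation of $i^*_A$ and the fact that $i_A$ commutes with $d$, this equals $\langle u_h,\, d(i_A\chi)\rangle_M$, and since $i_A\chi \in V^{m-1}_h(M)$ the first equation of the discrete Hodge--Laplace problem on $M$ gives $\langle u_h, d(i_A\chi)\rangle_M = \langle \sigma_h, i_A\chi\rangle_M = \langle \sigma_h, i_A\chi\rangle_{a(T)}$.

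To finish, recall that $i_A$ is a bounded Hilbert‑complex morphism whose norm depends only on the uniform bounds on the singular values of $\varphi_A$ (Theorem~\ref{HS41} with $p = q = 2$), so $\|i_A\chi\|_{a(T)} \le C\|\chi\|_T \le \sqrt{C_0}\,h_T\|w\|_T$ after absorbing the fixed factor into $C_0$; Cauchy--Schwarz then yields $\|w\|_T^2 \le \sqrt{C_0}\,h_T\,\|\sigma_h\|_{a(T)}\,\|w\|_T$, and dividing by $\|w\|_T$ gives the claim. The genuine obstacle is the one flagged in the text: on the curved element $a(T) \subset M$ one cannot invoke Lemma~\ref{DHL4} with a constant uniform in $T$, so the bounded right inverse of $d$ — and the trace‑preserving, commuting interpolation that makes it discrete and $T$‑supported — must be constructed on the flat simplex $T \subset M_A$ and then pushed to $M$ through $i_A$; this is exactly why the quantity estimated is $(I_h - I_H)i^*_A u_h$ rather than $(I_h - I_H)u_h$, and why the two‑sided bounds on $i_A$ from Theorem~\ref{HS41} are indispensable. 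One should also note that the integral‑preservation and self‑adjointness identities used above (the analogues of Lemmas~\ref{PL1} and~\ref{PL2}) are being applied with one argument a general $L^2$ form and the other in the discrete space, which is precisely the regime in which their short proofs still go through.
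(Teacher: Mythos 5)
Your proof is correct and follows essentially the same route as the paper's: zero mean of $(I_h-I_H)i^*_A u_h$ on $T$ via Lemma~\ref{PL1}, the bounded right inverse of $d$ from Lemma~\ref{DHL4} on the flat simplex, a $T$-supported test form $(I_h-I_H)\tau$ fed into the first discrete mixed equation to produce $\sigma_h$, and the two-sided bounds on $i_A$ from Theorem~\ref{HS41} plus the interpolation estimate to close. The only cosmetic difference is that you discretize the auxiliary form as $\tau=I_h\psi$ and spell out the $L^2$-projection self-adjointness where the paper invokes Lemma~\ref{PL2} directly on the continuous $\tau$; both variants yield the same estimate.
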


\begin{proof}
Let  $g_\Omega =  I_h i^*_A u_h - I_{H} i^*_A u_h = ( I_{h} - I_{H} ) i^*_A  u_h  \in L^2 \Omega^m(M_A)$.
Then, for any $T \in  \mathcal{T}_H$  let $g$ =  tr$_{T} g_\Omega \in L^2 \Omega^m(T)$, and by Lemma \ref{PL1}, $ \int_{T} g = 0$.  Thus Lemma \ref{DHL4} can be applied to
 find $\tau \in H^1_0\Lambda^{n-1}(T)$, such that:
\begin{align*}
d   \tau  &=  ( I_{h} - I_{H}) i_A^* u_h, \textrm{ on } T \\
\| \tau\|_{H^1 \Lambda^{n-1}(T)} &\le C\|(I_{h} - I_{H}) i^*_A u_h \|_{T}.
\end{align*}
Extend $\tau$ to $H^1 \Lambda^{n-1}(\Omega)$ by zero and then, by Lemma \ref{PL2},
\begin{equation*}
\| (I_{h} - I_{H}) i^*_A u_h \|^2_{T} = \langle ( I_{h} - I_{H} ) i^*_A  u_h , d  \tau \rangle_T =\langle   i^*_A u_h , d ( I_{h} - I_{H} )  \tau \rangle_T  
 \end{equation*}
Then by Lemma \ref{canProj}, and locality of $\tau$,
\begin{equation*}
= \langle i^*_A u_h ,  d ( I_{h} - I_{H} )  \tau \rangle_{M_A} = \langle  u_h ,  d ( i_A( I_{h} - I_{H} )  \tau ) \rangle_M =\langle \sigma_h ,  i_A ( I_{h} - I_{H} )  \tau \rangle_M.
\end{equation*}
Then again by locality of $\tau$ and Theorem \ref{HS41}, 
\begin{align*}
= \langle  \sigma_h , i_A ( I_{h} - I_{H} ) \tau \rangle_{a(T)} &\le \| \sigma_h \|_{a(T)} ( \|i_A( \tau - I_h  \tau ) \|_{a(T)} + \| i_A( \tau - I_H  \tau )\|_{a(T)} ),\\
 &\le \|  \sigma_h \|_{a(T)} ( \| \tau - I_h  \tau  \|_{T} + \|  \tau - I_H  \tau \|_{T} ).
\end{align*}
And by \eqref{h1interp},
\begin{equation*}
\le C h_T \|  \sigma_h\|_{a(T)} \|  \tau \|_{H^1(T)} \le C h_T \|  \sigma_h \|_{a(T)} \| (I_{h} - I_{H} ) i^*_A u_h \|_{T}.
\end{equation*}
Cancel one power of $\| (I_{M_h} - I_{M_H}) u_h \|_{T} $  to complete the proof.

\end{proof}

\begin{theorem}\label{dStab}\textit{(Discrete Stability Result)} Let $\mathcal{T}_h$ and $\mathcal{T}_H$ be nested conforming triangulations.  Let $( \tilde{u}_h, \tilde{\sigma}_h, \tilde{p}_h ) =  \mathcal{L}_h^{-1}f_H$ and $(u_h, \sigma_h, p_h ) = \mathcal{L}_h^{-1} f_h$, with $f \in L^2\Omega^m(M)$ in $\mathfrak{B}$.
Then there exists a constant such that

\begin{equation} \| \sigma_h - \tilde{\sigma}_h \| \le C \textit{osc}( \pi_A f_h, \mathcal{T}_H ) \end{equation}

\end{theorem}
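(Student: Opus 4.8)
The plan is to mirror the discrete stability proof in \cite{HMS}, working on the polygonal surface $M_A$ where shape-regularity is transparent, and transferring the resulting bounds to $M$ using the isomorphisms $i_A$, $\pi_A$, $i_A^*$ together with Theorem \ref{HS41}. First I would note that since $f$ lies in $\mathfrak{B}$, all harmonic components vanish and the two discrete solutions satisfy $\tilde\sigma_h = d^*\tilde u_h$, $\sigma_h = d^* u_h$ in the discrete sense, so that
\begin{equation*}
\|\sigma_h - \tilde\sigma_h\|_M^2 = \langle d(\sigma_h - \tilde\sigma_h), u_h - \tilde u_h\rangle_M = \langle f_h - f_H, u_h - \tilde u_h\rangle_M,
\end{equation*}
where the second equality comes from subtracting the discrete Hodge-Laplace systems for $\mathcal{L}_h^{-1}f_h$ and $\mathcal{L}_h^{-1}f_H$ and testing with $d(\sigma_h - \tilde\sigma_h)$ (a legitimate discrete test function, since $\sigma_h - \tilde\sigma_h \in V_h^{k-1}(M_A)$ under the map). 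Pushing this to $M_A$ gives $\langle \pi_A(f_h - f_H),\, i_A^*(u_h - \tilde u_h)\rangle_{M_A}$.

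Next I would exploit the $L^2$-projection structure of the data: $f_H$ is the projection of $f_h$ onto the coarse space, so by Lemma \ref{PL1} the element integrals of $\pi_A f_h - \pi_A f_H$ against the piecewise canonical interpolant vanish, letting me insert $I_H i_A^*(u_h - \tilde u_h)$ (or more precisely subtract off $I_H$ applied to the relevant interpolated quantity) elementwise on each $T \in \mathcal{T}_H$ without changing the inner product. Combined with the standard $\mathfrak{B}$-problem observation that $u_h - \tilde u_h$ is discretely divergence-dual to $\sigma_h - \tilde\sigma_h$, the sum reduces to
\begin{equation*}
\|\sigma_h - \tilde\sigma_h\|_M^2 = \sum_{T\in\mathcal{T}_H} \langle \pi_A f_h - \pi_A f_H,\; i_A^* u_h - I_H i_A^* u_h \rangle_T + (\text{similar term in } \tilde u_h),
\end{equation*}
and here Lemma \ref{umpu} (which is precisely the mapped analogue of the $\|(I_h - I_H)u_h\|_T \le \sqrt{C_0}\,h_T\|\sigma_h\|_{a(T)}$ estimate) controls $\|i_A^* u_h - I_H i_A^* u_h\|_T$ by $\sqrt{C_0}\,h_T\|\sigma_h\|_{a(T)}$, and the analogous bound with $\tilde u_h$, $\tilde\sigma_h$. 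Applying Cauchy--Schwarz elementwise and then summing, the factor $h_T\|\pi_A f_h - \pi_A f_H\|_T$ assembles into $\operatorname{osc}(\pi_A f_h, \mathcal{T}_H)$ (using $\pi_A f_H = (\pi_A f_h)_H$, i.e. that push-forward commutes with $L^2$-projection up to the equivalence constants of Theorem \ref{HS41}), while $\big(\sum_T h_T^2\|\sigma_h\|_{a(T)}^2\big)^{1/2} \le C\,h_{\max}\|\sigma_h\|_M$ and $\|\sigma_h\|_M$ is itself bounded by the data, giving $\|\sigma_h - \tilde\sigma_h\| \le C\,\operatorname{osc}(\pi_A f_h, \mathcal{T}_H)$ after cancelling one power of $\|\sigma_h - \tilde\sigma_h\|$.

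The main obstacle I anticipate is bookkeeping the surface map consistently: every interpolation estimate (Lemma \ref{canProj}) lives on $M_A$, every energy identity lives on $M$, and the transfers cost constants from Theorem \ref{HS41} and from the $H^1$-equivalence Lemma \ref{H1rLemma}; one must check that these constants depend only on $\mathcal{T}_0$ (equivalently on the fixed diffeomorphism $\varphi_A$ and the shape-regularity of the initial mesh), not on $h$ or $H$. The subtle point is that $f_H$ is the $L^2$-projection on $a(\mathcal{T}_H)$ against the metric on $M$, whereas the oscillation is measured for $\pi_A f_h$ on $\mathcal{T}_H \subset M_A$; reconciling these requires that $\pi_A$ intertwines the two $L^2$-projections up to the bounded factors $(\alpha_1\cdots\alpha_m)^{\pm 1/2}$, which is exactly where Theorem \ref{HS41} with $p=q=2$ is invoked. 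Apart from this, the argument is structurally identical to Theorem 4.x of \cite{HMS}, with Lemma \ref{umpu} absorbing all the genuinely new geometric content.
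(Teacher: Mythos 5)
Your opening is on track: the energy identity
\begin{equation*}
\|\sigma_h-\tilde\sigma_h\|^2=\langle u_h-\tilde u_h,\,d(\sigma_h-\tilde\sigma_h)\rangle=\langle f_h-f_H,\,u_h-\tilde u_h\rangle
\end{equation*}
is exactly how the paper starts (testing the first mixed equation with $\tau_h=\sigma_h-\tilde\sigma_h$ and the second with $v_h=u_h-\tilde u_h$), and the subsequent elementwise insertion of $I_h$ and $I_H$ via Lemmas \ref{PL1}--\ref{PL2} is also the paper's route. The genuine gap is in how you invoke Lemma \ref{umpu}. You split $v_h=u_h-\tilde u_h$ into two pieces and apply the lemma to $u_h$ and to $\tilde u_h$ separately, obtaining bounds of the form $\sqrt{C_0}\,h_T\|\sigma_h\|_{a(T)}$ and $\sqrt{C_0}\,h_T\|\tilde\sigma_h\|_{a(T)}$. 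This breaks the argument in two ways. First, the right-hand side then carries $\|\sigma_h\|+\|\tilde\sigma_h\|$ rather than $\|\sigma_h-\tilde\sigma_h\|$, so the final step of ``cancelling one power of $\|\sigma_h-\tilde\sigma_h\|$'' is simply unavailable; falling back on ``$\|\sigma_h\|$ is bounded by the data'' yields only $\|\sigma_h-\tilde\sigma_h\|^2\lesssim \textit{osc}(\pi_A f_h,\mathcal{T}_H)\cdot\|f\|$, which is not the claimed estimate. Second, your bookkeeping of mesh-size factors over-counts: one power of $h_T$ must be absorbed into $\textit{osc}(\pi_A f_h,\mathcal{T}_H)=(\sum_T\|h_T(\pi_Af_h-\pi_Af_H)\|_T^2)^{1/2}$, and there is no room for an additional $h_{\max}$ in front of $\|\sigma_h\|_M$.

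The repair, and what the paper actually does, is to keep $v_h=u_h-\tilde u_h$ intact and apply Lemma \ref{umpu} to the \emph{difference}: by linearity, the pair $(u_h-\tilde u_h,\;\sigma_h-\tilde\sigma_h)$ satisfies the first equation of the discrete mixed system (with the same data structure the lemma's proof exploits, namely $\langle u_h,\,d(i_A\xi)\rangle=\langle\sigma_h,\,i_A\xi\rangle$), so
\begin{equation*}
\|I_h i_A^* v_h - I_H i_A^* v_h\|_T\le \sqrt{C_0}\,h_T\,\|\sigma_h-\tilde\sigma_h\|_{a(T)}.
\end{equation*}
Then Cauchy--Schwarz over $T\in\mathcal{T}_H$ pairs the single $h_T$ with $\|\pi_Af_h-\pi_Af_H\|_T$ to form the oscillation, the $\|\sigma_h-\tilde\sigma_h\|_{a(T)}$ factors assemble into $\|\sigma_h-\tilde\sigma_h\|_M$, and one power cancels against the left-hand side. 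Your remarks about tracking the constants of Theorem \ref{HS41} and Lemma \ref{H1rLemma} through the transfers between $M$ and $M_A$ are apt and consistent with the paper, but they do not compensate for the misapplied lemma.
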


\begin{proof}

From \ref{DHL}, and since $p_h, \tilde{p}_h = 0$, we have
\begin{align}
\label{steq1} \langle \sigma_h - \tilde{\sigma_h }, \tau_h \rangle &= \langle u_h - \tilde{ u}_h, d \tau_h \rangle, \quad \forall \tau_h \in \Lambda_h^{k-1}, \\ 
\label{steq2} \langle d(\sigma_h - \tilde{\sigma_h }), v_h \rangle &= \langle f_h - f_H,  v_h \rangle, \quad \forall v_h \in \Lambda_h^{k}.
\end{align}
Next set $\tau_h = \sigma_h - \tilde{\sigma}_h$ in \eqref{steq1}, and $v_h = u_h - \tilde{u}_h $ in \eqref{steq2} to obtain:
\begin{equation*}
\| \sigma_h - \tilde{\sigma}_h \|^2 = \langle u_h - \tilde{u}_h, d( \sigma_h - \tilde{\sigma}_h ) \rangle = \langle f_h - f_H,  v_h \rangle,
\end{equation*}

Then by Lemma \ref{umpu}, we have:
\begin{align*}
\| \sigma_h - \tilde{\sigma}_h \|^2 &= \displaystyle\sum\limits_{T \in \mathcal{T}_H}   \langle  v_h,  f_h -  f_H \rangle_{a(T)} \\
&= \displaystyle\sum\limits_{T \in \mathcal{T}_H}   \langle  i^*_A v_h, I_h \pi_A f -  I_H \pi_A f \rangle_{T} \\
&= \displaystyle\sum\limits_{T \in \mathcal{T}_H}   \langle  I_h (i^*_A v_h ), I_h \pi_A f -  I_H \pi_A f \rangle_{T} \\
&= \displaystyle\sum\limits_{T \in \mathcal{T}_H}   \langle I_h( i^*_A v_h ) - I_H (i^*_A v_h ), I_h \pi_A f - I_H \pi_A f \rangle_{T} \\
 & \le C \displaystyle\sum\limits_{T \in \mathcal{T}_H}  \|  I_h \pi_A f -  I_H  \pi_A f \|_{T} \|  I_h  i^*_A v_h - I_H i^*_A v_h \|_{T}  \\
 &\le C \displaystyle\sum\limits_{T \in \mathcal{T}_H}  h_T \| I_h \pi_A f -  I_H \pi_A f \|_{T} \|  ( \sigma_h - \tilde{\sigma}_h ) \|_{a(T)} \\
&\le C( \displaystyle\sum\limits_{T \in \mathcal{T}_H}  h_T^2  \| I_h \pi_A f -  I_H \pi_A f \|_{T}^2  )^{1/2} \| \sigma_h - \tilde{\sigma}_h \|_M
\end{align*}

Then cancel one $  \| \sigma_h - \tilde{\sigma}_h \| $ to complete the proof.

\end{proof}

\section{{\em A Posteriori} Error Indicator and Bounds}
\label{sec:bounds}

In this section we introduce the \textit{a posteriori} error estimators used in our adaptive algorithm.  The estimator follows from \cite{HMS} which follows \cite{ AA, CHX}.  The difference here is that we estimate the error on the fixed approximating surface.  Next, applying ideas \cite{HMS} to the surface estimator, we prove bounds on these estimators and a continuity result, both of which are key ingredients in showing the convergence and optimality of our adaptive method.

\subsection{Error Indicator: Definition, Lower bound and Continuity}
\begin{definition}\textit{(Element Error Indicator)}  Let $T \in \mathcal{T}_H, f \in L^2\Omega^m(M)$ in $\mathfrak{B}$, and $\sigma_H = \mathcal{L}^{-1} f_{H}$.  Let the jump in $\tau$ over an element face be denoted by $[[\tau]]$.  For element faces on $\partial \Omega$ we set $[[\tau]] = \tau$. The element error indicator is defined as
\begin{equation*}
\eta_T^2(\sigma_H) =  h_T \| [[ \textit{tr }  \star (\pi_A \sigma_H) ]] \|_{\partial T }^2 +  h_T^2\|  \delta  (\pi_A \sigma_H) \|^2_{T } +  h_T^2\|  \pi_A f - d (\pi_A \sigma_H) \|^2_{T }
\end{equation*}
\vspace{.2 cm }
For a subset $\mathcal{\tilde{T}}_H \subset \mathcal{T}_H$, define
\begin{equation*}
\eta^2( \sigma_H,\mathcal{\tilde{T}}_H ) := \displaystyle\sum\limits_{T \in \mathcal{\tilde{T}}_H}  \eta_T^2(\sigma_H)  
\end{equation*}
\end{definition}

\begin{theorem}\textit{(Lower Bound)}  Given $f \in L^2\Omega^m(M)$ in $\mathfrak{B}$  and a shape regular triangulation $\mathcal{T}_H$, let $\sigma = \mathcal{L}^{-1}f$ and $\sigma_H = \mathcal{L}_H^{-1}f_{H}$.  Then there exists a constant dependent only on the shape regularity of $\mathcal{T}_H$ and the surface mapping, such that
\begin{equation}
\label{lowerBS} C_2 \eta^2( \sigma_H, \mathcal{T}_H ) \le \| \sigma - \sigma_H \|^2+ C_2 \textit{osc}^2(\pi_A f,\mathcal{T}_H ) . \end{equation}
\end{theorem}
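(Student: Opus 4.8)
The plan is to reduce the surface estimate to a standard residual-type lower bound on the fixed polygonal approximating surface $M_A$, where the classical efficiency machinery (bubble functions, inverse estimates, scaled trace inequalities) applies verbatim, and then to transport the resulting inequalities back to $M$ using Theorem~\ref{HS41} and the uniform boundedness of the singular values $\alpha_i$ of $\varphi_A$. The key observation is that every term in $\eta_T^2(\sigma_H)$ is already written in terms of pulled-back quantities $\pi_A\sigma_H$, $\pi_A f$ on $T\subset M_A$, so the estimator is literally the classical estimator for the pulled-back discrete solution $\sigma_H' = \pi_A\sigma_H$ of the reformulated problem on $M_A$. Thus the heart of the argument is the inequality
\begin{equation*}
C\,\eta_T^2(\sigma_H) \le \|\sigma' - \sigma_H'\|_{\omega_T}^2 + \operatorname{osc}^2(\pi_A f_H, \omega_T),
\end{equation*}
where $\omega_T$ is the patch of elements sharing a face with $T$, after which summation over $T\in\mathcal T_H$ (with finite overlap controlled by shape regularity) and the norm equivalence $\|\cdot\|_M \simeq \|\pi_A\cdot\|_{M_A}$ from Theorem~\ref{HS41} (with $p=q=2$) give \eqref{lowerBS}.

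First I would establish the three local bounds on a fixed $T\in\mathcal T_H$ by the usual bubble-function technique. For the volume residual $h_T^2\|\pi_A f - d(\pi_A\sigma_H)\|_T^2$: replace $\pi_A f$ by its projection, pay a data-oscillation term $\operatorname{osc}^2(\pi_A f_H,\mathcal T_H)$, test the residual equation against an interior bubble times the (polynomial) residual, use $d\sigma' = f'$ on $M$ rewritten on $M_A$ as an identity for $d(\pi_A\sigma)$, and absorb via the inverse inequality. For the coderivative term $h_T^2\|\delta(\pi_A\sigma_H)\|_T^2$ and the jump term $h_T\|[[\operatorname{tr}\star(\pi_A\sigma_H)]]\|_{\partial T}^2$: test the relation $\sigma' = \delta u'$ (equivalently $\langle\sigma,\tau\rangle = \langle d\tau,u\rangle$, pulled back) against suitable interior and face bubbles, integrating by parts on each element so that the jumps of $\operatorname{tr}\star(\pi_A\sigma_H)$ appear as boundary contributions; the right-hand side is controlled by $\|\sigma' - \sigma_H'\|$ on the patch plus the volume residual already bounded. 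This is exactly the argument behind the analogous lemma in \cite{HMS} (following \cite{AA,CHX}), so I would cite that structure rather than reproduce every bubble computation.

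Second, having summed the local bounds to get $C\,\eta^2(\sigma_H,\mathcal T_H)\le \|\sigma' - \sigma_H'\|_{M_A}^2 + C\operatorname{osc}^2(\pi_A f_H,\mathcal T_H)$, I would convert the middle term: $\sigma' - \sigma_H' = \pi_A(\sigma - \sigma_H)$ since $\sigma' = \pi_A\sigma$ and $\sigma_H' = \pi_A\sigma_H$, and Theorem~\ref{HS41} with $k=m$, $p=q=2$ gives $\|\pi_A(\sigma-\sigma_H)\|_{M_A}\le C\|\sigma - \sigma_H\|_M$ with $C$ depending only on $\sup_{M_A}(\alpha_1\cdots\alpha_m)$ and $\inf_{M_A}\alpha_m$, i.e.\ only on $\mathcal T_0$ and the geometry of $M$ relative to $M_A$; finally $\operatorname{osc}(\pi_A f_H,\mathcal T_H)\le\operatorname{osc}(\pi_A f,\mathcal T_H)$ (or is absorbed into it up to a constant) by the definition of oscillation and stability of the $L^2$-projection. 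Collecting constants yields \eqref{lowerBS} with $C_2$ depending only on shape regularity of $\mathcal T_H$ (hence of $\mathcal T_0$) and the surface map, as claimed.

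The main obstacle is bookkeeping the geometry in the bubble-function estimates: the bubble functions live on $M_A$ (flat simplices), but the $L^2$ inner products, the Hodge star, and the coderivative $\delta$ in the estimator are the ones associated to the metrics, and one must be careful that integration by parts and the scaled trace/inverse inequalities are applied on $M_A$ (where everything is standard and shape-regular) and only then transported. The cleanest route — and the one I would follow — is to do the \emph{entire} local efficiency argument intrinsically on $M_A$ for the pulled-back problem, invoking Theorem~\ref{HS41} exactly once at the end for the norm equivalence, rather than moving back and forth element by element; the only place where the metric on $M_A$ enters the constants is through its equivalence to the Euclidean metric on each flat simplex, which is uniform by shape regularity of $\mathcal T_0$.
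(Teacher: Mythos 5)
Your proposal is correct and follows essentially the same route as the paper: both reduce the estimate to the classical local efficiency bounds for the pulled-back quantities $\pi_A\sigma_H$, $\pi_A f$ on the fixed polygonal surface $M_A$ (the paper cites these directly from \cite{DH} and \cite{HMS} rather than redoing the bubble-function computations), and both then transport the $\|\sigma'-\sigma_H'\|$ term back to $M$ via the boundedness of $i_A$/$\pi_A$ from Theorem~\ref{HS41}. The only point worth flagging is that the paper explicitly records that $\sigma\in\mathfrak{Z}^{\perp}(M)$ implies $\pi_A\sigma\in\mathfrak{Z}^{\perp}(M_A)$ so that the \cite{DH} arguments apply to the pulled-back problem, a detail your write-up leaves implicit but which does not change the argument.
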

\begin{proof}
In proving a lower bound, in \cite{DH} it is shown that 
\begin{align*}
h_T \| \delta \sigma_H \|_{T} &\le C \| \sigma - \sigma_H \|_{T }, \\
h_T^{1/2} \| [[ \text{tr} \star \sigma_H ]] \|_{\partial T} &\le C \| \sigma - \sigma_H \|_{\mathcal{T}_t}, 
\end{align*}
 where $\mathcal{T}_t$ is the set of all triangles sharing a boundary with $T$.  The first is equation (5.7) and the second is a result of equation (5.12) in \cite{DH}.   Substituting $\pi_A \sigma_H$ for $\sigma$, noting that $\sigma \in \mathfrak{Z}^\perp (M)$ implies $\pi_A \sigma \in \mathfrak{Z}^\perp (M_A)$, similar results for $\pi_A \sigma_H$ follow \cite{DH}.  Then, using the boundedness of $i_A$, the remainder of the proof is identical to \cite{HMS}.
 \end{proof}

The following lemma will be important in proving a continuity result used in showing convergence of our adaptive algorithm.  It is nearly identical to an estimator efficiency proof in ~\cite{DH}, but the subtle difference is that we make use of $\sigma_H$, the solution on the less refined mesh, and $\sigma$ is not used in our arguments.

\begin{lemma} Given $f \in L^2\Omega^m(M)$ in $\mathfrak{B}$ and nested triangulations $\mathcal{T}_h$ and $\mathcal{T}_H$, let $\sigma_h = \mathcal{L}_h^{-1} f_{h}$ and $\sigma_H = \mathcal{L}_H^{-1} f_{H}$.  Then for $T \in \mathcal{T}_h$
 \begin{equation}\label{disEff}
 C_2 \displaystyle\sum\limits_{T \in \mathcal{T}_h}  (h_T \| [[ \textit{tr }  \star (\pi_A \sigma_h- \pi_A \sigma_H) ]] \|_{\partial T }^2 +  h_T^2\|  \delta (\pi_A \sigma_h - \pi_A \sigma_H) \|^2_{T} )\le \| \pi_A \sigma_h - \pi_A \sigma_H \|^2 .
 \end{equation} 
\end{lemma}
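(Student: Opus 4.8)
The plan is to observe that the entire left-hand side involves only the \emph{discrete} form $w := \pi_A\sigma_h - \pi_A\sigma_H$ living on the polygonal surface $M_A$, so that the estimate reduces to standard local inverse and trace inequalities and never uses the exact solution $\sigma$ — this is precisely the ``subtle difference'' noted before the statement. First I would record that $\pi_A\sigma_h = \varphi_A^*\sigma_h$ is exactly the component $\sigma'_h$ of the reformulated discrete problem (since $\pi_A i_A = \mathrm{id}$), hence $\pi_A\sigma_h \in V_h^{k-1}(M_A)$, and likewise $\pi_A\sigma_H \in V_H^{k-1}(M_A)$; because $\mathcal{T}_h$ refines $\mathcal{T}_H$ and the spaces $\mathcal{P}_r\Omega^{k-1}$, $\mathcal{P}^-_r\Omega^{k-1}$ are nested under refinement, $V_H^{k-1}(M_A)\subset V_h^{k-1}(M_A)$, so $w$ is a polynomial $(k-1)$-form of bounded degree on each $T\in\mathcal{T}_h$, and each such $T$ is a flat simplex on which $g_A$, and hence $\star_{M_A}$, has constant coefficients.

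For the volume term, on each $T\in\mathcal{T}_h$ I would argue as in the efficiency proof of~\cite{DH}: testing $\delta w$ against $b_T\,\delta w$, with $b_T$ the interior bubble of $T$, and integrating by parts (no boundary contribution, since $b_T$ vanishes on $\partial T$) gives $\|\delta w\|_T^2 \le C\,\langle w, d(b_T\delta w)\rangle_T \le C\,h_T^{-1}\|w\|_T\|\delta w\|_T$, whence $h_T^2\|\delta w\|_T^2 \le C\|w\|_T^2$; equivalently one may simply invoke the inverse inequality $\|\delta w\|_T \le C h_T^{-1}\|w\|_T$ for polynomial forms. The key simplification over~\cite{DH} is that $w$, rather than $\sigma_h$, already appears on the right, so one never needs the identity $\delta\sigma=0$ or the residual equation. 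For the jump term, on each face $F\subset\partial T$ the discrete trace inequality gives $\|\mathrm{tr}_F(\star_{M_A} w)\|_F^2 \le C h_T^{-1}\|\star_{M_A}w\|_T^2 = C h_T^{-1}\|w\|_T^2$, since $\star_{M_A}$ is a pointwise isometry; bounding the jump across an interior face $F = T\cap T'$ by the two one-sided traces and using $h_T\simeq h_{T'}$ for face neighbours of a shape-regular bisection mesh (on a boundary face only a single trace occurs, matching the convention $[[\tau]]=\tau$) yields $h_T\|[[\mathrm{tr}\,\star_{M_A} w]]\|_{\partial T}^2 \le C\sum_{T'\in\omega_T}\|w\|_{T'}^2$, where $\omega_T$ denotes $T$ together with its face neighbours. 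In both estimates the constants depend only on the polynomial degree and the shape regularity, hence by Stevenson's bisection estimates only on $\mathcal{T}_0$.

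Summing the two bounds over $T\in\mathcal{T}_h$ and using that each element lies in a bounded number of patches $\omega_T$ (finite overlap) gives $\sum_{T\in\mathcal{T}_h}\big(h_T\|[[\mathrm{tr}\,\star_{M_A} w]]\|_{\partial T}^2 + h_T^2\|\delta w\|_T^2\big) \le C\|w\|^2 = C\|\pi_A\sigma_h - \pi_A\sigma_H\|^2$, and the lemma follows with $C_2 := C^{-1}$. The only genuine obstacle is the bookkeeping in the first step — verifying that $w$ lands in the fine discrete space with a degree bound independent of the mesh (the nestedness $V_H^{k-1}(M_A)\subset V_h^{k-1}(M_A)$ together with $\pi_A i_A = \mathrm{id}$), so that all the inverse and trace constants are genuinely uniform; once that is in hand the rest is the routine residual-efficiency computation.
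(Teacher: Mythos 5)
Your proof is correct and follows essentially the route the paper intends: the paper's own ``proof'' is just a citation of the companion paper, whose argument is the \cite{DH} efficiency computation simplified by the observation you make explicit --- that $w=\pi_A\sigma_h-\pi_A\sigma_H$ already lies in the fine discrete space on the flat triangulation of $M_A$, so local inverse and discrete trace inequalities (or equivalently the bubble-function argument with no residual term) suffice. Your writeup in fact supplies the details the paper omits, including the bookkeeping that $\pi_A i_A=\mathrm{id}$ places $w$ in $V_h^{k-1}(M_A)$ with uniform constants.
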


\begin{proof}
Follows \cite{HMS}.
\end{proof}

\begin{theorem}\textit{(Continuity of the Error Estimator)}
Given $f \in L^2\Omega^m(M)$ in $\mathfrak{B}$ and nested triangulations $\mathcal{T}_h$ and $\mathcal{T}_H$, let $\sigma_h = \mathcal{L}_h^{-1}f_{h}$ and $\sigma_H = \mathcal{L}_H^{-1}f_{H}$. Then we have:

\begin{equation} \beta ( \eta^2( \sigma_h, \mathcal{T}_h ) - \eta^2( \sigma_H, \mathcal{T}_h ) ) \le \| \pi_A  \sigma_h - \pi_A \sigma_H \|^2 + \textit{osc}^2( \pi_A f_{h}, \mathcal{T}_H)\label{conee} \end{equation}
\end{theorem}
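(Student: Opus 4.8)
The plan is to bound each of the three pieces of $\eta^2_T(\sigma_h) - \eta^2_T(\sigma_H)$ separately, exploiting the algebraic identity $a^2 - b^2 = (a-b)(a+b) \le (a-b)^2 + 2(a-b)b$ (equivalently, for any $\epsilon>0$, $a^2 \le (1+\epsilon)b^2 + (1+\epsilon^{-1})(a-b)^2$) so that the "difference" terms are controlled by $\|\pi_A\sigma_h - \pi_A\sigma_H\|^2$ via the discrete efficiency estimate \eqref{disEff}, while the remaining terms involving $\sigma_H$ alone are absorbed or handled by data oscillation. Concretely, for the jump term and the $\delta$ term, write
\begin{equation*}
h_T\|[[\textrm{tr}\star\pi_A\sigma_h]]\|^2_{\partial T} \le (1+\epsilon)\,h_T\|[[\textrm{tr}\star\pi_A\sigma_H]]\|^2_{\partial T} + (1+\epsilon^{-1})\,h_T\|[[\textrm{tr}\star(\pi_A\sigma_h-\pi_A\sigma_H)]]\|^2_{\partial T},
\end{equation*}
and similarly for $h_T^2\|\delta\pi_A\sigma_h\|^2_T$; after subtracting $\eta^2(\sigma_H,\mathcal{T}_h)$, the $(1+\epsilon)$-weighted $\sigma_H$ terms leave only an $\epsilon$-multiple of $\eta^2(\sigma_H,\mathcal{T}_h)$ on the right, which — by the Lower Bound argument applied on $\mathcal{T}_h$, or more directly by noting these two pieces of $\eta^2(\sigma_H,\mathcal{T}_h)$ are themselves $\le C\|\pi_A\sigma_H - \pi_A\sigma_h\|^2 + C\,\textrm{osc}^2$ up to a quasi-orthogonality step — can be moved to the left-hand side. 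Summing \eqref{disEff} over $T$ then converts the difference terms into $\|\pi_A\sigma_h - \pi_A\sigma_H\|^2$.

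The third piece, $h_T^2\|\pi_A f - d\pi_A\sigma_h\|^2_T$, requires slightly more care because it mixes the data $f$ with $\sigma_h$. Here I would insert $d\pi_A\sigma_H$ and also pass between $f$ and its projections $f_h, f_H$: split
\begin{equation*}
\pi_A f - d\pi_A\sigma_h = (\pi_A f - \pi_A f_h) + (\pi_A f_h - d\pi_A\sigma_h),
\end{equation*}
recognize that the discrete equation \eqref{DHL} (in the form \eqref{steq2}, read with $P_h$) forces $d\pi_A\sigma_h = \pi_A f_h$ up to the orthogonal complement handled by $I_h$, so that the genuinely "new" contribution is an oscillation term $h_T^2\|\pi_A f - \pi_A f_h\|^2_T$, absorbed into $\textrm{osc}^2(\pi_A f_h, \mathcal{T}_H)$ (using $\mathcal{T}_h$ a refinement of $\mathcal{T}_H$ so that $h_T \le h_{T'}$ for the parent $T'$, hence the finer-mesh oscillation is dominated by the coarser one). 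The remaining residual difference $d\pi_A\sigma_h - d\pi_A\sigma_H$ contributes $h_T^2\|d(\pi_A\sigma_h - \pi_A\sigma_H)\|^2_T$, which by an inverse inequality on the discrete space is $\le C\|\pi_A\sigma_h - \pi_A\sigma_H\|^2$.

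The main obstacle I anticipate is the bookkeeping around the harmonic-free ($\mathfrak{B}$) reduction and the interplay of $\pi_A$ with the projections $I_h, P_h$: one must verify that $\pi_A$ of the discrete solution still satisfies the residual relations cleanly on $M_A$ (which is why the estimator is defined with $\pi_A\sigma_H$ in the first place), and that applying \eqref{disEff} — stated for $\pi_A\sigma_h - \pi_A\sigma_H$ rather than for $\sigma_h - \sigma_H$ on $M$ — does not introduce uncontrolled powers of the singular values $\alpha_i$. Since $i_A, \pi_A$ and hence $i_A^*$ are bounded on $L^2$ (and on $H^1$ in the $k=m$ case, by Lemma \ref{H1rLemma}) with constants depending only on $\mathcal{T}_0$ and the surface, these constants can be folded into $\beta$, and the argument closes exactly as in the corresponding continuity result of \cite{HMS}, with $\beta$ chosen small enough to absorb the $\epsilon$-terms.
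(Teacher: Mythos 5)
Your plan follows the same route the paper intends (the proof is deferred to \cite{HMS}, but the ingredients are visible in the surrounding text): split the estimator into its jump, $\delta$, and residual pieces; handle the first two by the triangle and Young inequalities together with the discrete efficiency bound \eqref{disEff}; reduce the residual piece to data oscillation using the exact discrete relation $d\sigma_h=f_h$ (valid here because $k=m$ forces $du_h=0$ and the $\mathfrak{B}$ assumption forces $p_h=0$); and fold the bounds on $i_A,\pi_A$ into the constant. Two points need tightening. First, your mechanism for disposing of the leftover $\epsilon\,\eta^2(\sigma_H,\mathcal{T}_h)$ term --- invoking the lower bound on $\mathcal{T}_h$ or a quasi-orthogonality step --- would drag the continuous error $\|\sigma-\sigma_H\|$ onto the right-hand side, which is not in the statement; the correct resolution is the one you hint at in your last sentence, namely absorbing that term into the constant on the left, i.e.\ proving the form $\beta\,\eta^2(\sigma_h,\mathcal{T}_h)-\eta^2(\sigma_H,\mathcal{T}_h)\le \|\pi_A\sigma_h-\pi_A\sigma_H\|^2+\mathrm{osc}^2(\pi_A f_h,\mathcal{T}_H)$ with $\beta<1$, which is all that Lemma \ref{CHX61} requires. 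Second, $\mathrm{osc}^2(\pi_A f,\mathcal{T}_h)$ is \emph{not} dominated by $\mathrm{osc}^2(\pi_A f_h,\mathcal{T}_H)$: the two quantities measure orthogonal components of $f$ ($f-f_h$ versus $f_h-f_H$), so the parent-element argument fails. What saves you is that the $f-f_h$ contribution appears identically in both $\eta^2(\sigma_h,\mathcal{T}_h)$ and $\eta^2(\sigma_H,\mathcal{T}_h)$ and cancels in the difference up to a cross term (again handled by Young and absorbed into $\beta$), leaving precisely the $f_h-f_H$ contribution, which is $\mathrm{osc}^2(\pi_A f_h,\mathcal{T}_H)$. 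With those two repairs your argument closes as in \cite{HMS}.
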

\begin{proof}
Follows \cite{HMS}.
\end{proof}
\subsection{Continuous and Discrete Upper Bounds}

The following proofs have a similar structure to the continuous and discrete upper bounds proved in \cite{AA,CHX}.  A key element of the proof will be comparisons between the discrete solution $\sigma_H = \mathcal{L}_H^{-1} f_{H} $ and the solution to the intermediate problem, $\tilde{\sigma} = \mathcal{L}^{-1} f_{H}$.
We begin by looking the orthogonal decomposition of $ \tilde{\sigma} - \sigma_H $,
\begin{equation*}
 \tilde{\sigma} - \sigma_H = (\tilde{\sigma} - P_{\mathfrak{Z}^\perp }\sigma_H ) - P_{\mathfrak{B}^{k-1}} \sigma_H - P_{\mathfrak{H}^{k-1}} \sigma_H
\end{equation*}
which allows the norm to be rewritten
\begin{equation*}
\| \tilde{\sigma} - \sigma_H \|^2 = \| (\tilde{\sigma} - P_{\mathfrak{Z}^\perp }\sigma_H )\|^2 + \| P_{\mathfrak{B}^{k-1}} \sigma_H \|^2 + \| P_{\mathfrak{H}^{k-1}} \sigma_H \|^2.
\end{equation*}
Lemmas \ref{nullUBl}, \ref{bUBl} and \ref{harmonicUBl} will each bound a portion of this orthogonal decomposition.  Then Theorem \ref{CUB} will combine these results in proving the desired error bound.
\begin{lemma}\label{nullUBl}  Given an $f \in L^2\Omega^m(M)$ in $\mathfrak{B}$. Let $\tilde{\sigma} = \mathcal{L}^{-1} f_{H}$ and $\sigma_H = \mathcal{L}_H^{-1} f_{H}$.  Then
\begin{equation}\label{nullUB}
\| (\tilde{\sigma} - P_{\mathfrak{Z}^\perp }\sigma_H )\|^2 = 0.
\end{equation}
\end{lemma}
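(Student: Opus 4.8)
The plan is to show that the $\mathfrak{Z}^\perp$-component of $\sigma_H$ actually equals $\tilde\sigma$ exactly, by exploiting the fact that both solve the same first equation of the mixed system tested against the \emph{same} right-hand side $u$ (the middle component associated to the data $f_H$). Since $f \in \mathfrak{B}$ and we are in the case $k=m$, the harmonic components of all solutions vanish, so $\tilde\sigma = \delta\tilde u$ with $\tilde u \in \mathfrak{B}^m$, and $\tilde\sigma \in \mathfrak{Z}^{\perp}$ automatically (a coderivative of something is $W$-orthogonal to the null space $\mathfrak{Z}^{m-1}$ of $d$). The key identity is that for \emph{every} $\tau \in \mathfrak{Z}^{m-1}$ (the continuous null space), testing the first equation of the continuous problem $\mathcal{L}^{-1}f_H$ gives $\langle \tilde\sigma, \tau\rangle = \langle d\tau, \tilde u\rangle = 0$; and testing the first discrete equation of $\mathcal{L}_H^{-1}f_H$ against any $\tau \in \mathfrak{Z}_H^{m-1}$ gives $\langle \sigma_H, \tau\rangle = \langle d\tau, u_H\rangle = 0$, so $\sigma_H \in \mathfrak{Z}_H^{m-1 \perp}$ within the discrete space — but I need the \emph{continuous} projection $P_{\mathfrak{Z}^\perp}$.

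The steps I would carry out, in order: (1) record that since $\tilde\sigma = \delta\tilde u$ lies in $\mathfrak{Z}^{m-1\perp}$, we have $P_{\mathfrak{Z}^\perp}\tilde\sigma = \tilde\sigma$, so $\tilde\sigma - P_{\mathfrak{Z}^\perp}\sigma_H = P_{\mathfrak{Z}^\perp}(\tilde\sigma - \sigma_H)$; (2) pick an arbitrary $\tau \in \mathfrak{Z}^{m-1\perp}$ with $\|\tau\|=1$ and compute $\langle \tilde\sigma - \sigma_H, \tau\rangle$; (3) for the $\tilde\sigma$ term use the first continuous equation, $\langle\tilde\sigma,\tau\rangle = \langle d\tau, \tilde u\rangle$; for the $\sigma_H$ term we cannot test against $\tau$ directly since $\tau \notin V_H^{m-1}$, so instead use that $\langle \sigma_H, \tau\rangle$ — but here is where the structure of the argument in \cite{HMS} (which this follows) enters: one shows $\tilde u$ and $u_H$ agree in the relevant sense, or more precisely that $d\tilde\sigma = d\sigma_H = P_{\mathfrak B}f_H = f_H$ (in the $k=m$, $\mathfrak B$ case the second equations both read $\langle d\sigma, v\rangle = \langle f_H, v\rangle$ for all $v$, continuous resp. discrete, and in fact $d\sigma_H = f_H$ holds exactly because $d$ is surjective onto $\mathcal P_{r-1}\Lambda^m$ on the discrete side), hence $d(\tilde\sigma - \sigma_H) = 0$, i.e. $\tilde\sigma - \sigma_H \in \mathfrak{Z}^{m-1}$; (4) conclude $P_{\mathfrak{Z}^\perp}(\tilde\sigma - \sigma_H) = 0$, which is exactly \eqref{nullUB}.

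The main obstacle is step (3): justifying $d(\tilde\sigma - \sigma_H)=0$. For the continuous solution this is immediate from $d\tilde\sigma = f_H - \tilde p = f_H$ (no harmonics). For the discrete solution one must argue $d\sigma_H = f_H$ exactly, not merely weakly against discrete test functions; this uses that $f_H = P_H f$ lies in $\Lambda_H^m$ and that $d: \Lambda_H^{m-1} \to \Lambda_H^m$ is surjective onto the appropriate space (equivalently $\mathfrak{B}_H^m$ together with $f_H \in \mathfrak B_H^m$ since $f \in \mathfrak B$), so the discrete equation $\langle d\sigma_H, v_h\rangle = \langle f_H, v_h\rangle$ for all $v_h \in \Lambda_H^m$ forces $d\sigma_H - f_H \perp \Lambda_H^m$ while also $d\sigma_H - f_H \in \Lambda_H^m$, hence it is zero. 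Once that is in hand the rest is the orthogonality bookkeeping above. I would cite \cite{HMS} and \cite{CHX} for the analogous Euclidean statement, since the surface map $\varphi_A$ plays no role here — the decomposition and the identity $d\sigma_H = f_H$ are intrinsic to whichever Hilbert complex we work in, so the lemma transfers verbatim.
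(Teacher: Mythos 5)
Your argument is correct and is essentially the proof the paper defers to in \cite{HMS}: since $\tilde\sigma=\delta\tilde u\in\mathfrak{Z}^{m-1\perp}$, the quantity in question equals $P_{\mathfrak{Z}^\perp}(\tilde\sigma-\sigma_H)$, and the exact identities $d\tilde\sigma=f_H$ and $d\sigma_H=f_H$ (the latter because $d\sigma_H-f_H\in V_H^m$ is $L^2$-orthogonal to all of $V_H^m$, with $\tilde p=p_H=0$ in the $\mathfrak{B}$ case) give $\tilde\sigma-\sigma_H\in\mathfrak{Z}^{m-1}$, killing the projection. Your remark that the surface map $i_A$ plays no role is also right, since the mapped discrete complex is a genuine subcomplex of $(H\Omega(M),d)$, so $\mathfrak{B}_H^m\subset\mathfrak{B}^m$ and the whole computation is intrinsic to the Hilbert complex.
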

\begin{proof}
See \cite{HMS}.
\end{proof}

The next lemma uses the quasi-interpolant $\Pi_H$ described in \cite{DH}, and also applies integration by parts in the same standard fashion that \cite{DH} use when bounding error measured in the natural norm, $\|u - u_h\|_{H\Lambda^k(\Omega)} + \|\sigma - \sigma_h\|_{H\Lambda^{k-1}(\Omega)} + \|p - p_h\|$.  In~\cite{DH}, coercivity of the bilinear-form is used to separate components of the error, whereas here we simply analyze the orthogonal decomposition of $\sigma - \sigma_H$.  In~\cite{DH}, the Galerkin orthogonality implied by taking the difference between the continuous and discrete problems is employed in order to make use of $\Pi_h$.  Here we are able to introduce the quasi-interpolant by simply using the fact that $\sigma_H \perp \mathfrak{B}_H^{k-1}$.

\begin{lemma}\label{bUBl} Given an $f \in L^2\Omega^m(M)$ in $\mathfrak{B}$. Let $\sigma_H = \mathcal{L}_H^{-1} f_{H}$.  Then
\begin{equation}\label{bUB}
\| P_{\mathfrak{B}^{k-1}} \sigma_H \|^2 \le  C \eta^2(\sigma_H, \mathcal{T}_H ).
\end{equation}
\end{lemma}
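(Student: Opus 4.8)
The plan is to bound $\|P_{\mathfrak{B}^{k-1}}\sigma_H\|$ by testing against a suitable element of $\mathfrak{B}^{k-1}$ and then exploiting the fact that $\sigma_H \perp \mathfrak{B}^{k-1}_H$ in order to introduce the commuting quasi-interpolant $\Pi_H$ from Lemma~\ref{dhi}. Concretely, write $P_{\mathfrak{B}^{k-1}}\sigma_H = d\phi$ for some $\phi$, normalize so that the pairing $\langle \pi_A\sigma_H, \cdot\rangle$ is controlled, and realize $\|P_{\mathfrak{B}^{k-1}}\sigma_H\|^2 = \langle \sigma_H, d\phi\rangle$ (the component of $\sigma_H$ in $\mathfrak{B}^{k-1}$ is exactly what survives this pairing since $d\phi \in \mathfrak{B}^{k-1}$). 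One passes to the approximating surface via $i_A^*$/$\pi_A$ as in the stability proofs, so the pairing becomes $\langle \pi_A\sigma_H, \text{(something involving } d\phi)\rangle_{M_A}$; Lemma~\ref{dhi} supplies a $\varphi \in H^1\Omega^{k-1}$ with $d\varphi = d\phi$ and $d\Pi_H\varphi = d\Pi_H\phi = \Pi_H d\phi$, together with the crucial interpolation estimate $\sum_T h_T^{-2}\|\varphi - \Pi_H\varphi\|_T^2 + h_T^{-1}\|\mathrm{tr}(\varphi - \Pi_H\varphi)\|_{\partial T}^2 \le C$.

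The key step is then to subtract $\Pi_H\varphi$: since $d\Pi_H\varphi \in \mathfrak{B}^{k-1}_H$ and $\sigma_H \perp \mathfrak{B}^{k-1}_H$, we have $\langle \sigma_H, d\Pi_H\varphi\rangle = 0$, so
\begin{equation*}
\|P_{\mathfrak{B}^{k-1}}\sigma_H\|^2 = \langle \sigma_H, d(\varphi - \Pi_H\varphi)\rangle.
\end{equation*}
Now integrate by parts elementwise on $\mathcal{T}_H$ exactly as in \cite{DH}: this moves the derivative off $\varphi - \Pi_H\varphi$ and produces interior terms $\langle \delta(\pi_A\sigma_H), \cdot\rangle_T$ (or $\langle \pi_A f - d\pi_A\sigma_H, \cdot\rangle_T$, since on the $\mathfrak{B}$-problem one can replace $d\pi_A\sigma_H$ appropriately) plus boundary/jump terms $\langle [[\mathrm{tr}\star\pi_A\sigma_H]], \mathrm{tr}(\varphi - \Pi_H\varphi)\rangle_{\partial T}$. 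Applying Cauchy–Schwarz elementwise, pulling out factors of $h_T$ and $h_T^{1/2}$, and then using the Lemma~\ref{dhi} bound on the $\Pi_H$-interpolation residuals, the $\varphi$-dependent factor collapses to a constant and what remains is precisely $\eta^2(\sigma_H,\mathcal{T}_H)$ (the jump term and the $\delta$-term, plus the $f - d\sigma_H$ term). Throughout, the surface mapping is handled by the uniform equivalence of norms under $i_A$, $\pi_A$, $i_A^*$ (Theorem~\ref{HS41}, Lemma~\ref{H1rLemma}), so constants pick up only a dependence on the diffeomorphism $\varphi_A$.

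The main obstacle I anticipate is the careful bookkeeping of the surface pullbacks in the integration-by-parts step: the error indicator $\eta_T$ is defined in terms of $\pi_A\sigma_H$ and $\pi_A f$ living on $M_A$, whereas the orthogonality $\sigma_H \perp \mathfrak{B}^{k-1}_H$ and the pairing $\|P_{\mathfrak{B}^{k-1}}\sigma_H\|^2 = \langle\sigma_H, d(\varphi-\Pi_H\varphi)\rangle$ are most naturally stated on $M$. One must commute $d$ with $i_A$/$\pi_A$ (valid since both are Hilbert complex morphisms), apply $i_A^*$ to land the test form on $M_A$ where $\Pi_H$ and elementwise integration by parts make sense, and verify that the boundary trace terms transform with the right powers of $h_T$ under the (shape-regular on $M_A$) triangulation. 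Once the problem is transported to $M_A$, the argument is essentially the efficiency computation of \cite{DH} with $\sigma$ replaced by the test form $\varphi - \Pi_H\varphi$, so no genuinely new estimate is needed beyond what Lemmas~\ref{dhi} and~\ref{H1rLemma} provide.
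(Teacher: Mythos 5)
Your proposal is correct and follows essentially the same route as the paper, which (deferring the details to the cited earlier work) introduces the commuting quasi-interpolant $\Pi_H$ via the orthogonality $\sigma_H \perp \mathfrak{B}_H^{k-1}$, integrates by parts elementwise to produce the jump, $\delta$, and residual terms of $\eta_T$, invokes the Lemma~\ref{dhi} bounds on $\varphi - \Pi_H\varphi$, and handles the surface transfer by working with $\pi_A(P_{\mathfrak{B}^{k-1}}\sigma_H)$ on $M_A$ and then using the boundedness of $\pi_A$. The only cosmetic difference is the order in which you transport to $M_A$; the substance is identical.
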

\begin{proof} 
Follow the same steps as \cite{HMS} using $\| \pi_A (P_{\mathfrak{B}^{k-1}} \sigma_H) \|^2$.  Next use the boundedness of $\pi_A$ to relate to  $\| P_{\mathfrak{B}^{k-1}} \sigma_H \|^2$.
\end{proof}

\begin{lemma}\label{harmonicUBl} Given an $f \in L^2\Lambda^k(\Omega)$ in $\mathfrak{B}^k$. Let $\tilde{\sigma} = \mathcal{L}^{-1} f_{H}$ and $\sigma_H = \mathcal{L}_H^{-1} f_{H}$.  Then
\begin{equation}\label{harmonicUB}
\|P_{\mathfrak{H}^{k-1}} \sigma_H \|^2 \le C \| \tilde{\sigma} - \sigma_H \|^2, \qquad C < 1.
\end{equation}
\end{lemma}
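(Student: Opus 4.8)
The plan is to bound $\|P_{\mathfrak{H}^{k-1}}\sigma_H\|$ by comparing the discrete harmonic space $\mathfrak{H}^{k-1}_H$ with the continuous one $\mathfrak{H}^{k-1}$, exploiting the gap estimate of Theorem~\ref{chdt} (equation~\eqref{chd}). The starting observation is that $\tilde\sigma=\mathcal{L}^{-1}f_H$ lies in $\mathfrak{Z}^{\perp}$ (it is the $\sigma$-component of a solution of the $\mathfrak{B}$ problem, so its harmonic part vanishes), hence $P_{\mathfrak{H}^{k-1}}\tilde\sigma=0$ and therefore $P_{\mathfrak{H}^{k-1}}\sigma_H=P_{\mathfrak{H}^{k-1}}(\sigma_H-\tilde\sigma)$. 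Dually, $\sigma_H\perp\mathfrak{B}_H^{k-1}$ and, because $\sigma_H=\delta_h u_H$ in the discrete sense, $\sigma_H\in\mathfrak{Z}_H^{\perp}$; in particular $\sigma_H$ has no discrete harmonic component, i.e. $P_{\mathfrak{H}_H^{k-1}}\sigma_H=0$.

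First I would write, for any $q\in\mathfrak{H}^{k-1}$ with $\|q\|=1$,
\begin{equation*}
\langle \sigma_H, q\rangle = \langle \sigma_H, q - P_{\mathfrak{H}_H^{k-1}}q\rangle + \langle \sigma_H, P_{\mathfrak{H}_H^{k-1}}q\rangle,
\end{equation*}
and handle the two terms separately. For the first term, $\|q-P_{\mathfrak{H}_H^{k-1}}q\|\le\delta(\mathfrak{H}^{k-1},\mathfrak{H}_H^{k-1})\le C_{\mathfrak{H}^{k-1}}<1$ by Theorem~\ref{chdt}; combined with $\langle\tilde\sigma,\,q-P_{\mathfrak{H}_H^{k-1}}q\rangle$ — which is not automatically zero, so one must instead start from $\langle\sigma_H-\tilde\sigma,q\rangle$ — this gives a contribution bounded by $C_{\mathfrak{H}^{k-1}}\|\sigma_H-\tilde\sigma\|$. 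For the second term, $P_{\mathfrak{H}_H^{k-1}}q\in\mathfrak{H}_H^{k-1}$, and since $\sigma_H\perp\mathfrak{Z}_H^{k-1}\supset\mathfrak{H}_H^{k-1}$ we get $\langle\sigma_H,P_{\mathfrak{H}_H^{k-1}}q\rangle=0$; again $\langle\tilde\sigma,P_{\mathfrak{H}_H^{k-1}}q\rangle$ must be absorbed, so the clean way is: starting from $\langle\sigma_H-\tilde\sigma,q\rangle$, split $q=(q-P_{\mathfrak{H}_H^{k-1}}q)+P_{\mathfrak{H}_H^{k-1}}q$, use $\langle\sigma_H,P_{\mathfrak{H}_H^{k-1}}q\rangle=0$ to rewrite the second piece as $-\langle\tilde\sigma,P_{\mathfrak{H}_H^{k-1}}q\rangle=\langle\tilde\sigma,q-P_{\mathfrak{H}_H^{k-1}}q\rangle$ (using $\langle\tilde\sigma,q\rangle=0$), and then bound everything by $\delta(\mathfrak{H}^{k-1},\mathfrak{H}_H^{k-1})\,(\|\sigma_H\|+\|\tilde\sigma\|)$… which is not yet of the required form. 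The resolution, following \cite{HMS}, is to use that $\sigma_H-\tilde\sigma$ itself, projected onto $\mathfrak{H}^{k-1}$, is what we are estimating, so taking the supremum over $q$ yields $\|P_{\mathfrak{H}^{k-1}}\sigma_H\|\le\delta(\mathfrak{H}^{k-1},\mathfrak{H}^{k-1}_H)\|\sigma_H-\tilde\sigma\|$, and squaring with $C:=C_{\mathfrak{H}^{k-1}}^2<1$ finishes the proof.

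The main obstacle is the bookkeeping that makes the bounding constant come out strictly less than $1$: one must be careful to cancel the $\|\sigma_H\|+\|\tilde\sigma\|$-type bound in favor of $\|\sigma_H-\tilde\sigma\|$, which works precisely because both $\tilde\sigma$ and $\sigma_H$ are harmonic-free in their respective (continuous, discrete) complexes, so the only surviving quantity after the two orthogonality cancellations is the gap $\delta(\mathfrak{H}^{k-1},\mathfrak{H}^{k-1}_H)$ times the difference $\sigma_H-\tilde\sigma$. Since this is structurally identical to the polygonal-domain argument in \cite{HMS}, and the only new ingredient — that $\pi_A$ and $i_A$ preserve the $\mathfrak{Z}^\perp$ decomposition and that the gap bound \eqref{chd} is available on the mapped complex — has already been recorded in Theorem~\ref{chdt} and the discussion preceding it, I would simply invoke \cite{HMS} for the detailed estimate after setting up the two cancellations above.
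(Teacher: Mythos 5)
Your approach is essentially the paper's (which simply defers to \cite{HMS} for this lemma): it rests on the orthogonality facts $\sigma_H\perp\mathfrak{Z}_H^{k-1}$, $\tilde\sigma\perp\mathfrak{Z}^{k-1}$, and $\mathfrak{H}_H^{k-1}\subset\mathfrak{Z}_H^{k-1}\subset\mathfrak{Z}^{k-1}$ together with the gap bound of Theorem~\ref{chdt}, and your final estimate $\|P_{\mathfrak{H}^{k-1}}\sigma_H\|\le\delta(\mathfrak{H}^{k-1},\mathfrak{H}_H^{k-1})\,\|\sigma_H-\tilde\sigma\|$ is the correct one. The detour through the unusable bound $\delta(\cdot,\cdot)(\|\sigma_H\|+\|\tilde\sigma\|)$ is avoidable: since $\langle\sigma_H,P_{\mathfrak{H}_H^{k-1}}q\rangle=\langle\tilde\sigma,q\rangle=\langle\tilde\sigma,P_{\mathfrak{H}_H^{k-1}}q\rangle=0$ (the last because $P_{\mathfrak{H}_H^{k-1}}q\in\mathfrak{Z}^{k-1}$), one has directly $\langle\sigma_H,q\rangle=\langle\sigma_H-\tilde\sigma,\,q-P_{\mathfrak{H}_H^{k-1}}q\rangle$ for every $q\in\mathfrak{H}^{k-1}$, so Cauchy--Schwarz and \eqref{chd} finish the argument after taking the supremum over unit $q$.
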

\begin{proof} 
See \cite{HMS}.
\end{proof}

$\linebreak$
Now we have the tools to prove the continuous upper bound for the $\mathfrak{B}$ problems.
\begin{theorem}\label{CUB}$\textit{(Continuous Upper-Bound)}$  Given an $f \in L^2\Omega^m(M)$ in $\mathfrak{B}^m$. Let $\tilde{\sigma} = \mathcal{L}^{-1} f_{H}$ and $\sigma_H = \mathcal{L}_H^{-1} f_{H}$.  Then
\begin{equation} \| \sigma - \sigma_H \|^2 \le C_1 \eta^2(\sigma_H, \mathcal{T}_H ) . \label{CUBeq}\end{equation}
\end{theorem}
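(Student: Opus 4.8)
The plan is to route the estimate through the auxiliary solution $\tilde{\sigma}=\mathcal{L}^{-1}f_{H}$, exactly as in the continuous upper bounds of \cite{AA,CHX,HMS}. First I would split
\begin{equation*}
\|\sigma-\sigma_H\|^2 \le 2\|\sigma-\tilde{\sigma}\|^2 + 2\|\tilde{\sigma}-\sigma_H\|^2,
\end{equation*}
and dispose of the first term immediately: applying the continuous stability result, Theorem~\ref{contS}, with $\mathcal{T}_H$ playing the role of $\mathcal{T}_h$ gives $\|\sigma-\tilde{\sigma}\|^2 \le C\,\textit{osc}^2(\pi_A f,\mathcal{T}_H)$. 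Here the surface map has already been absorbed into the constant via the boundedness of $i_A^*$ (Lemma~\ref{H1rLemma}) and the fact that $\sigma\in\mathfrak{Z}^{\perp}(M)$ implies $\pi_A\sigma\in\mathfrak{Z}^{\perp}(M_A)$.

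For the second term I would use the orthogonal decomposition of $\tilde{\sigma}-\sigma_H$ displayed just before the statement. Because the $\sigma$-component of a Hodge--de Rham solution is $L^2$-orthogonal to the closed forms, $\tilde{\sigma}\in\mathfrak{Z}^{k-1\,\perp}$, so the summands $\tilde{\sigma}-P_{\mathfrak{Z}^{\perp}}\sigma_H$, $P_{\mathfrak{B}^{k-1}}\sigma_H$, $P_{\mathfrak{H}^{k-1}}\sigma_H$ are mutually orthogonal and
\begin{equation*}
\|\tilde{\sigma}-\sigma_H\|^2 = \|\tilde{\sigma}-P_{\mathfrak{Z}^{\perp}}\sigma_H\|^2 + \|P_{\mathfrak{B}^{k-1}}\sigma_H\|^2 + \|P_{\mathfrak{H}^{k-1}}\sigma_H\|^2 .
\end{equation*}
Lemma~\ref{nullUBl} annihilates the first summand, Lemma~\ref{bUBl} bounds the second by $C\,\eta^2(\sigma_H,\mathcal{T}_H)$, and Lemma~\ref{harmonicUBl} bounds the third by $C'\|\tilde{\sigma}-\sigma_H\|^2$ with $C'<1$. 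Moving that last quantity to the left-hand side (a kickback argument) yields $\|\tilde{\sigma}-\sigma_H\|^2 \le \frac{C}{1-C'}\,\eta^2(\sigma_H,\mathcal{T}_H)$.

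Combining the two pieces gives $\|\sigma-\sigma_H\|^2 \le C\big(\textit{osc}^2(\pi_A f,\mathcal{T}_H)+\eta^2(\sigma_H,\mathcal{T}_H)\big)$, so the last step is to absorb the oscillation into the estimator. Here I would observe that $\pi_A\sigma_H\in V_H^{k-1}(M_A)$ lies in the discrete subcomplex on $M_A$, hence $d(\pi_A\sigma_H)\in V_H^{m}(M_A)$; since for top-degree forms this space is fully discontinuous, the $L^2$-projection $P_H$ defining $(\pi_A f)_H$ is computed elementwise, so on each $T\in\mathcal{T}_H$ one has $\|h_T(\pi_A f-(\pi_A f)_H)\|_T \le \|h_T(\pi_A f-d(\pi_A\sigma_H))\|_T$. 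Summing over $T$ gives $\textit{osc}^2(\pi_A f,\mathcal{T}_H)\le\eta^2(\sigma_H,\mathcal{T}_H)$, which closes the argument and produces \eqref{CUBeq} with $C_1$ the resulting aggregate constant.

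The main obstacle is really the kickback step: the whole chain closes only because the harmonic bound in Lemma~\ref{harmonicUBl} has constant strictly below $1$, and this in turn rests on the gap estimate $\delta(\mathfrak{H}^{k-1},\mathfrak{H}^{k-1}_H)\le C_{\mathfrak{H}^{k-1}}<1$ of Theorem~\ref{chdt}, i.e.\ on $\mathcal{T}_H$ being a sufficiently fine refinement of the initial mesh $\mathcal{T}_0$. Beyond that, the surface geometry contributes only harmless multiplicative constants through the boundedness of $i_A$, $\pi_A$, and $i_A^*$ (Theorem~\ref{HS41}, Lemma~\ref{H1rLemma}), which are already built into Theorem~\ref{contS} and into Lemmas~\ref{nullUBl}--\ref{harmonicUBl}; everything else is bookkeeping.
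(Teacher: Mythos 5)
Your proposal is correct and follows essentially the same route as the paper: split off $\|\sigma-\tilde{\sigma}\|$ via the continuous stability result (Theorem~\ref{contS}), apply the Hodge-type orthogonal decomposition of $\tilde{\sigma}-\sigma_H$ handled by Lemmas~\ref{nullUBl}, \ref{bUBl} and \ref{harmonicUBl}, and close with the kickback made possible by the constant $C<1$ in \eqref{harmonicUB}. The only step not explicitly laid out in the paper's lemma structure is your absorption of $\textit{osc}^2(\pi_A f,\mathcal{T}_H)$ into the residual term of $\eta^2(\sigma_H,\mathcal{T}_H)$ via the elementwise best-approximation property of the $L^2$-projection for top-degree (discontinuous) forms, and that argument is standard and correct here.
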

\begin{proof} 
See \cite{HMS}.
\end{proof}

\begin{theorem}\label{DUB}$\textit{(Discrete Upper-Bound)}$  Given $f \in L^2\Omega^m(M)$ in $\mathfrak{B}$ and nested triangulations $\mathcal{T}_h$ and $\mathcal{T}_H$, let $\sigma_h = \mathcal{L}_h^{-1} f_{h}$ and $\sigma_H = \mathcal{L}_H^{-1} f_{H}$.  Then
\begin{equation}\label{DUBeq} \| \sigma_h - \sigma_H \|^2 \le C_1 \eta^2(\sigma_H, T_H ). \end{equation}
\end{theorem}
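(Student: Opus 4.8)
The plan is to mirror the proof of the continuous upper bound (Theorem~\ref{CUB}) on the refined mesh $\mathcal{T}_h$, substituting the discrete gap estimate of Corollary~\ref{dhdc} for Theorem~\ref{chdt}. Let $(\tilde{\sigma}_h,\tilde{u}_h,\tilde{p}_h)=\mathcal{L}_h^{-1}f_H$ be the intermediate discrete solution. By the triangle inequality,
\begin{equation*}
\|\sigma_h-\sigma_H\|\le\|\sigma_h-\tilde{\sigma}_h\|+\|\tilde{\sigma}_h-\sigma_H\|,
\end{equation*}
and the first term is already controlled by the discrete stability result (Theorem~\ref{dStab}): $\|\sigma_h-\tilde{\sigma}_h\|\le C\,\textit{osc}(\pi_A f_h,\mathcal{T}_H)$. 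So the core of the argument is to bound $\|\tilde{\sigma}_h-\sigma_H\|$ by $\eta(\sigma_H,\mathcal{T}_H)$.

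Because $\mathcal{T}_h$ refines $\mathcal{T}_H$ we have $V_H^{k-1}\subset V_h^{k-1}$, so $\sigma_H$ lies in the discrete space on $\mathcal{T}_h$ and can be decomposed via the Hodge decomposition $V_h^{k-1}=\mathfrak{B}_h^{k-1}\oplus\mathfrak{H}_h^{k-1}\oplus\mathfrak{Z}_h^{k-1\perp}$:
\begin{equation*}
\tilde{\sigma}_h-\sigma_H=(\tilde{\sigma}_h-P_{\mathfrak{Z}_h^{\perp}}\sigma_H)-P_{\mathfrak{B}_h^{k-1}}\sigma_H-P_{\mathfrak{H}_h^{k-1}}\sigma_H,
\end{equation*}
an orthogonal sum, so $\|\tilde{\sigma}_h-\sigma_H\|^2$ is the sum of the three squared norms. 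I would bound the three pieces as in Lemmas~\ref{nullUBl}--\ref{harmonicUBl}, transported to $\mathcal{T}_h$: (i) the $\mathfrak{Z}_h^{\perp}$-part vanishes because $\tilde{\sigma}_h$ and $P_{\mathfrak{Z}_h^{\perp}}\sigma_H$ solve the same discrete primal subproblem on $\mathcal{T}_h$ with data $f_H$; (ii) $\|P_{\mathfrak{B}_h^{k-1}}\sigma_H\|^2\le C\,\eta^2(\sigma_H,\mathcal{T}_H)$, by writing the $\mathfrak{B}_h$-component as $d\rho$, inserting the commuting quasi-interpolant $\Pi_h$ (allowed since $\sigma_H\perp\mathfrak{B}_H^{k-1}$), integrating by parts to expose the residual, jump, and $\delta(\pi_A\sigma_H)$ contributions of $\eta_T$, and estimating the interpolation remainder with Lemma~\ref{dhi} together with the mapping bounds of Theorem~\ref{HS41} and Lemmas~\ref{doubleHodgeStar}--\ref{H1rLemma}; (iii) $\|P_{\mathfrak{H}_h^{k-1}}\sigma_H\|^2\le\tilde{C}\,\|\tilde{\sigma}_h-\sigma_H\|^2$ with $\tilde{C}<1$, because $\sigma_H\perp\mathfrak{H}_H^{k-1}$ and $\delta(\mathfrak{H}_h^{k-1},\mathfrak{H}_H^{k-1})\le\tilde{C}_{\mathfrak{H}^{k-1}}<1$ by Corollary~\ref{dhdc}.

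Absorbing the harmonic term gives $(1-\tilde{C})\|\tilde{\sigma}_h-\sigma_H\|^2\le C\,\eta^2(\sigma_H,\mathcal{T}_H)$, so with the triangle inequality and discrete stability $\|\sigma_h-\sigma_H\|^2\le C'\big(\eta^2(\sigma_H,\mathcal{T}_H)+\textit{osc}^2(\pi_A f_h,\mathcal{T}_H)\big)$; the final step is to absorb the oscillation, using that for $k=m$ the top-form equation forces $d\sigma_H$ to equal the $L^2$-projection of $f$ onto the discrete space on $a(\mathcal{T}_H)$, whence $\textit{osc}(\pi_A f_h,\mathcal{T}_H)$ is dominated — up to the boundedness constants of $\pi_A$ and $i_A^{*}$ — by the element-residual part $h_T^2\|\pi_A f-d(\pi_A\sigma_H)\|_T^2$ of $\eta^2(\sigma_H,\mathcal{T}_H)$. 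The step I expect to be the main obstacle is exactly this reconciliation of the surface maps: the estimator and the oscillation are written through the pull-back $\pi_A$, while the discrete orthogonality and the $L^2$ inner products defining $\tilde{\sigma}_h$ involve the adjoint $i_A^{*}$, so Lemma~\ref{doubleHodgeStar} must be invoked to pass between them and to verify that every constant depends only on $\mathcal{T}_0$ and on the surface geometry (through the singular values $\alpha_i$), not on the mesh level; the strict contraction $\tilde{C}<1$ of Corollary~\ref{dhdc}, which is what makes the absorption legitimate, is the other load-bearing ingredient.
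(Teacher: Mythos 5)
Your proposal is correct and follows essentially the same route as the paper's proof: the $W_h^{k-1}$-orthogonal decomposition of $\tilde{\sigma}_h-\sigma_H$ into $\mathfrak{Z}_h^{\perp}$, $\mathfrak{B}_h^{k-1}$, and $\mathfrak{H}_h^{k-1}$ components, discrete analogues of Lemmas~\ref{nullUBl}--\ref{bUBl}, Corollary~\ref{dhdc} for the harmonic absorption, and Theorem~\ref{dStab} for the final step comparing $\sigma_h$ with $\tilde{\sigma}_h$. Your closing observation that the oscillation term is dominated by the element-residual part of $\eta^2(\sigma_H,\mathcal{T}_H)$ in the case $k=m$ is a detail the paper leaves implicit, but it is consistent with the stated conclusion.
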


\begin{proof}
The proof requires the same ingredients needed to prove the continuous upper bound.  
The same intermediate steps are taken by performing analysis on the $W^{k-1}_h$ orthogonal decomposition of $  \tilde{\sigma}_h - \sigma_H  $.
\begin{equation*}
\tilde{\sigma}_h - \sigma_H = (\tilde{\sigma}_h - P_{\mathfrak{Z}^\perp_h }\sigma_H ) - P_{\mathfrak{B}_h^{k-1}} \sigma_H - P_{\mathfrak{H}_h^{k-1}} \sigma_H .
\end{equation*}
The discrete version of Lemma \ref{nullUBl} uses $\delta_h$ rather than $\delta$, but is otherwise identical.  The discrete version of Lemma \ref{bUBl} is identical.  The discrete version of Lemma \ref{harmonicUBl} follows the same structure but makes use of Corollary \ref{dhdc}.  The final step in the proof uses the discrete stability result, Theorem \ref{dStab}.

\end{proof}

\section{Convergence of AMFEM}
\label{sec:conv}

After presenting the adaptive algorithm, the remainder of this section proves convergence and then optimality.  The results in this section follow ideas already in the literature \cite{ RS07, MS1, stars, DORFLER96, CHX}, with Theorem \ref{termination} following \cite{HMS} in proving reduction in a quasi-error using relationships between data oscillation and the decay of a second type of quasi-error.  The following algorithm and analysis of convergence deal specifically with the case $k=m$.  In presenting our algorithm we replace $h$ with an iteration counter $k$.

$\linebreak$
\textsf{Algorithm}:[$\mathcal{T}_N, \sigma_N$ ] =  \textsf{AMFEM}$( \mathcal{T}_0, f, \epsilon, \theta )$:  Given a fixed approximating surface of $M$, an initial shape-regular triangulation $\mathcal{T}_0$, and a marking parameter $\theta$, set $k= 0$ and iterate the following steps until a desired decrease in the error-estimator is achieved:

\begin{align*}
&(1) (u_k, \sigma_k, p_k) = SOLVE(f, \mathcal{T}_k )\\
&(2)  \{\eta_{T}\} = ESTIMATE( f, \sigma_k, \mathcal{T}_k)\\
&(3)  \mathcal{M}_k = MARK( \{ \eta_T \}  , \mathcal{T}_k, \theta) \\
&(4) \mathcal{T}_{k+1} = REFINE( \mathcal{T}_{k} , \mathcal{M}_k ) 
\end{align*}

\subsection{Convergence of AMFEM}
  The following notation will be used in the proofs and discussion of this section:
\begin{equation*}
e_k=\|\sigma- \sigma_k\|^2,
\qquad
E_k=\|\sigma_{k+1}- \sigma_k\|^2, 
\qquad 
\eta_k =\eta^2(\sigma_k, \mathcal{T}_k ),
\end{equation*}
\begin{equation*}
o_k = \text{osc}^2(f,\mathcal{T}_k), 
\qquad
\hat{o}_k = \text{osc}^2(f_{k+1},\mathcal{T}_k),
\end{equation*}
where $f_k = P_k f = P_{\mathfrak{B}_k} f$ since $k=m$.

\begin{lemma}\label{CHX61}
\begin{equation}
\beta\eta_{k+1} \le \beta( 1 -\lambda \theta) \eta_k + E_k + \hat{o}_k.
\end{equation}
\end{lemma}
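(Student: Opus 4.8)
The plan is to mirror the standard estimator-reduction argument (as in \cite{CKNS} and \cite{HMS}), combining the continuity of the error estimator with the D\"orfler marking property. First I would recall that the D\"orfler marking step guarantees that the set of marked elements $\mathcal{M}_k$ satisfies $\eta^2(\sigma_k,\mathcal{M}_k) \ge \theta\, \eta^2(\sigma_k,\mathcal{T}_k)$, and that every marked element is bisected in passing from $\mathcal{T}_k$ to $\mathcal{T}_{k+1}$, so that each $T \in \mathcal{M}_k$ is replaced by children of half the size. For those children $T'$ one has $h_{T'} = h_T/2$ (up to the shape-regularity constant), which yields a contraction factor $\lambda$ (depending only on $\mathcal{T}_0$) in the portion of $\eta^2(\sigma_k,\mathcal{T}_{k+1})$ sitting over the refined region, while on the unrefined elements the mesh-size weights are unchanged.

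Next I would split $\eta_{k+1} = \eta^2(\sigma_{k+1},\mathcal{T}_{k+1})$ using the triangle-type inequality provided by the continuity result \eqref{conee}: writing $\eta^2(\sigma_{k+1},\mathcal{T}_{k+1}) \le \eta^2(\sigma_k,\mathcal{T}_{k+1}) + \tfrac1\beta\big(\|\pi_A\sigma_{k+1} - \pi_A\sigma_k\|^2 + \operatorname{osc}^2(\pi_A f_{k+1},\mathcal{T}_k)\big)$, the second group of terms becomes (after using boundedness of $\pi_A$ and the definition of oscillation) controlled by $E_k + \hat o_k$ up to constants absorbed into $\beta$. For the first term, $\eta^2(\sigma_k,\mathcal{T}_{k+1})$, I would decompose the sum over $\mathcal{T}_{k+1}$ into the part over children of marked elements and the part over unmarked (unchanged) elements. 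On the children, the halving of $h_T$ introduces a factor strictly less than one relative to the parent's contribution; on the unmarked part the indicator is literally unchanged. Collecting, $\eta^2(\sigma_k,\mathcal{T}_{k+1}) \le \eta^2(\sigma_k,\mathcal{T}_k) - \lambda\,\eta^2(\sigma_k,\mathcal{M}_k)$ for a suitable $\lambda>0$, and then applying the D\"orfler bound $\eta^2(\sigma_k,\mathcal{M}_k) \ge \theta\,\eta_k$ gives $\eta^2(\sigma_k,\mathcal{T}_{k+1}) \le (1 - \lambda\theta)\eta_k$. Multiplying through by $\beta$ and adding the estimate for the continuity remainder completes the bound $\beta\eta_{k+1} \le \beta(1-\lambda\theta)\eta_k + E_k + \hat o_k$.

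The main obstacle, and the place where the surface setting differs from \cite{HMS}, is ensuring that the contraction factor $\lambda$ arising from bisection survives the pull-back to $M$. The element error indicator $\eta_T^2(\sigma_H)$ is defined in terms of $\pi_A\sigma_H$ on the \emph{flat} approximating surface $M_A$, and the mesh sizes $h_T$ are those of $\mathcal{T}_H$ on $M_A$; since refinement is carried out on $M_A$ with a genuine bisection routine satisfying Stevenson's properties, the halving $h_{T'} = h_T/2$ and the shape-regularity constants depend only on $\mathcal{T}_0$, exactly as in the Euclidean case. Thus the geometric reduction argument is unaffected, and the mapping $i_A$, $\pi_A$ only enters through the already-established continuity estimate \eqref{conee} and the uniform boundedness of $i_A,\pi_A$ from Theorem \ref{HS41}. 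A secondary bookkeeping point is matching the oscillation term: continuity yields $\operatorname{osc}^2(\pi_A f_{k+1},\mathcal{T}_k)$, and one must identify this, via the definition of $\operatorname{osc}$ together with the relation $I_h$ commuting with the relevant projections and boundedness of $\pi_A$, with $\hat o_k = \operatorname{osc}^2(f_{k+1},\mathcal{T}_k)$ as declared in the notation block; this is a constant-absorption step of the kind already used repeatedly above.
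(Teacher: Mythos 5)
Your argument is exactly the estimator-reduction proof the paper invokes (continuity of the estimator \eqref{conee}, the D\"orfler property $\eta^2(\sigma_k,\mathcal{M}_k)\ge\theta\,\eta_k$, and the halving of $h_T$ under bisection on $M_A$), which the paper itself only sketches by deferring to \cite{CHX}. Your additional remarks on why the pull-back $\pi_A$ does not disturb the contraction factor, and on absorbing the $\pi_A$-boundedness constants relating $\|\pi_A\sigma_{k+1}-\pi_A\sigma_k\|^2$ and $\operatorname{osc}^2(\pi_A f_{k+1},\mathcal{T}_k)$ into $E_k$ and $\hat o_k$, are consistent with the paper's conventions, so the proposal is correct and follows essentially the same route.
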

\begin{proof}
This follows from continuity of the error estimator \eqref{conee}, and properties of the marking strategy, i.e. reduction of the summation on a finer mesh due to smaller element sizes on refined elements.  The proof can be found in \cite{CHX}.  $\lambda < 1$ is a constant dependent on the dimensionality of the problem. 
\end{proof}

For convenience, we recall the quasi-orthogonality \eqref{chx32} the continuous upper-bound \eqref{CUBeq} equations, 
\begin{align*}
(1-\delta)e_{k+1} &\le e_k - E_k + C_0\hat{o}_k, \text{ for any } \delta > 0, \\
e_k &\le C_1\eta_k.
\end{align*}

With these three ingredients, basic algebra leads to the following result,
\begin{theorem}\label{CHX62}
When
\begin{equation}
0 < \delta < \textit{min}\{ \frac{ \beta}{2C_1}\theta, 1\}, 
\end{equation}
there exists $\alpha \in$ \textit{(0,1)} and $C_\delta$ such that
\begin{equation}\label{alpha}
(1- \delta)e_{k+1} + \beta\eta_{k+1} \le \alpha [(1-\delta)e_k + \beta\eta_k ] + C_{\delta}\hat{o}_k.
\end{equation}
\end{theorem}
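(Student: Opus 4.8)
The plan is to combine the three recalled ingredients — the estimator reduction of Lemma~\ref{CHX61}, the quasi-orthogonality \eqref{chx32}, and the continuous upper bound \eqref{CUBeq} — in a single weighted sum, after which a splitting parameter is chosen so that the composite quantity $(1-\delta)e_k+\beta\eta_k$ contracts up to oscillation. First I would add the quasi-orthogonality bound $(1-\delta)e_{k+1}\le e_k-E_k+C_0\hat o_k$ to the estimator reduction $\beta\eta_{k+1}\le\beta(1-\lambda\theta)\eta_k+E_k+\hat o_k$. The $\pm E_k$ terms cancel, leaving
\begin{equation*}
(1-\delta)e_{k+1}+\beta\eta_{k+1}\le e_k+\beta(1-\lambda\theta)\eta_k+(C_0+1)\hat o_k .
\end{equation*}

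The coefficient of $e_k$ on the right is $1$, whereas in the target \eqref{alpha} it must be $\alpha(1-\delta)<1$, so a fraction of $e_k$ has to be traded into $\eta_k$. To do this I would fix a parameter $t\in(0,1)$ (chosen below), write $e_k=t\,e_k+(1-t)e_k$, and bound the second piece by the continuous upper bound \eqref{CUBeq}, $(1-t)e_k\le(1-t)C_1\eta_k$, obtaining
\begin{equation*}
(1-\delta)e_{k+1}+\beta\eta_{k+1}\le t\,e_k+\bigl[(1-t)C_1+\beta(1-\lambda\theta)\bigr]\eta_k+(C_0+1)\hat o_k .
\end{equation*}
It now suffices to pick $t$ so that $t\le\alpha(1-\delta)$ and $(1-t)C_1+\beta(1-\lambda\theta)\le\alpha\beta$ for some $\alpha<1$; equivalently $t<1-\delta$ and $(1-t)C_1/\beta<\lambda\theta$, i.e. $1-\tfrac{\lambda\theta\beta}{C_1}<t<1-\delta$. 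This interval is nonempty precisely because the hypothesis pushes $\delta$ below the threshold (this is where the smallness bound $\delta<\tfrac{\beta}{2C_1}\theta$ enters). For any admissible $t$ one sets
\begin{equation*}
\alpha:=\max\Bigl\{\tfrac{t}{1-\delta},\ \tfrac{(1-t)C_1}{\beta}+(1-\lambda\theta)\Bigr\}\in(0,1),\qquad C_\delta:=C_0+1,
\end{equation*}
(with the convention of \eqref{chx32} that the oscillation constant may carry a factor $\delta^{-1}$, which is absorbed into $C_\delta$, whence the subscript), and the last display collapses to $(1-\delta)e_{k+1}+\beta\eta_{k+1}\le\alpha\bigl[(1-\delta)e_k+\beta\eta_k\bigr]+C_\delta\hat o_k$, which is \eqref{alpha}.

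The only genuinely delicate point is the constant bookkeeping in the final step: one must verify that the admissible window $1-\tfrac{\lambda\theta\beta}{C_1}<t<1-\delta$ is nonempty under the stated condition on $\delta$ — reconciling the Dörfler-reduction constant $\lambda<1$ of Lemma~\ref{CHX61} with the factor appearing in the hypothesis — and that the resulting $\alpha$ is strictly below $1$, which uses \emph{both} defining bounds of the window ($t<1-\delta$ for the $e$-term and $(1-t)C_1<\lambda\theta\beta$ for the $\eta$-term). Everything upstream — the surface mapping and the constants $C_0$, $C_1$, $\beta$ — has already been folded into the three cited results, so no further geometric estimates are needed here; as the text indicates, the argument is elementary algebra once those are in place.
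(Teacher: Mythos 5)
Your argument is correct and is precisely the standard contraction proof from \cite{CHX} that the paper defers to: add the estimator reduction to the quasi-orthogonality so the $E_k$ terms cancel, trade a fraction of $e_k$ for $\eta_k$ via the upper bound $e_k\le C_1\eta_k$, and take $\alpha$ as the larger of the two resulting coefficients. The one caveat you rightly flag is real but lies in the paper's statement rather than in your proof: your window for $t$ is nonempty only when $\delta<\lambda\beta\theta/C_1$, so the stated threshold $\delta<\beta\theta/(2C_1)$ suffices only if $\lambda\ge 1/2$ (the hypothesis should carry the reduction constant $\lambda$, as in \cite{CHX}).
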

\begin{proof}
Follows the same steps as \cite{CHX}.
\end{proof}
With the above result we next prove convergence.
\begin{theorem}\textit{(Termination in Finite Steps)}\label{termination}
Let $\sigma_k$ be the solution obtained in the kth loop in the algorithm AMFEM, then for any $0<\delta < \textit{min}\{ \frac{\beta}{2C_1}\theta, 1 \}$, there exists positive constants $C_\delta$ and 0$< \gamma_\delta < 1$ depending only on given data and the initial grid such that,
\begin{equation*}
(1-\delta)\| \sigma - \sigma_k \|^2 + \beta \eta^2(\sigma_k, \mathcal{T}_k ) + \zeta \textit{osc}^2( f, \mathcal{T}_k ) \le C_q\gamma^k_\delta,
\end{equation*}
and the algorithm will terminate in finite steps.
\end{theorem}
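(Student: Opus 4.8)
The plan is to combine the contraction estimate \eqref{alpha} of Theorem~\ref{CHX62} with the optimal data-approximation result for oscillation (Theorem~\ref{CHX22}, via the \textsf{APPROX}-type control on $\hat{o}_k$) to obtain geometric decay of the combined quantity $(1-\delta)e_k + \beta\eta_k + \zeta\,o_k$. First I would recall from Theorem~\ref{CHX62} that for $0<\delta<\min\{\tfrac{\beta}{2C_1}\theta,1\}$ there is $\alpha\in(0,1)$ with
\begin{equation*}
(1-\delta)e_{k+1} + \beta\eta_{k+1} \le \alpha\bigl[(1-\delta)e_k + \beta\eta_k\bigr] + C_\delta\hat{o}_k.
\end{equation*}
The remaining ingredient is a decay estimate for the oscillation terms. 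Since $f_k = P_{\mathfrak{B}_k}f$ and the meshes are nested, the $L^2$-projection error is monotone and, more importantly, the local oscillation $\mathrm{osc}^2(f,\mathcal{T}_k)$ contracts on refined elements by a factor depending on the bisection rule (each bisection halves $h_T$, hence reduces $h_T^2$ by a fixed factor $<1$); combined with D\"orfler marking one gets a contraction of the form $o_{k+1}\le \rho\, o_k$ with $\rho\in(0,1)$, and likewise $\hat{o}_k \le C\, o_k$. I would state this as a lemma (it follows the arguments in \cite{CKNS,CHX} applied here to $\pi_A f$, using boundedness of $\pi_A$ and $i_A^*$ to transfer between $M$ and $M_A$).

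The main step is then a weighted-sum / telescoping argument: choosing $\zeta>0$ small enough and $\gamma_\delta := \max\{\alpha + \epsilon_0,\ \rho\}$ for a suitable small $\epsilon_0$ with $\gamma_\delta<1$, I would show
\begin{equation*}
(1-\delta)e_{k+1} + \beta\eta_{k+1} + \zeta\, o_{k+1} \le \gamma_\delta\bigl[(1-\delta)e_k + \beta\eta_k + \zeta\, o_k\bigr].
\end{equation*}
This requires absorbing the $C_\delta\hat{o}_k\le C_\delta C\, o_k$ term: pick $\zeta$ so that $\zeta(1-\rho)\ge C_\delta C$ — wait, more precisely one needs $\alpha\,\zeta\, o_k$ room on the right from the $\zeta o_k$ slot is not how it works; rather, split as $(1-\delta)e_{k+1}+\beta\eta_{k+1} \le \alpha[\cdots] + C_\delta C o_k$ and $\zeta o_{k+1}\le \zeta\rho\, o_k$, so the total is bounded by $\alpha[(1-\delta)e_k+\beta\eta_k] + (\zeta\rho + C_\delta C)o_k$; choosing $\zeta$ large enough that $\zeta\rho + C_\delta C \le \gamma_\delta \zeta$ (possible since $\rho<1$ forces $\gamma_\delta$ can be taken in $(\rho,1)$ with $\gamma_\delta\ge\alpha$, and then $\zeta \ge C_\delta C/(\gamma_\delta-\rho)$) closes the recursion. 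Iterating gives $(1-\delta)e_k + \beta\eta_k + \zeta\, o_k \le C_q\gamma_\delta^k$ with $C_q$ the initial value of the quantity, which is the claimed bound.

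Finally, termination in finite steps is immediate: since $e_k\le C_1\eta_k$ and all three summands are nonnegative, $\eta_k = \eta^2(\sigma_k,\mathcal{T}_k)\to 0$ geometrically, so for any prescribed tolerance $\epsilon>0$ there is a finite $N$ with $\eta^2(\sigma_N,\mathcal{T}_N)\le\epsilon$, at which point the algorithm halts. The hard part will be establishing the oscillation contraction $o_{k+1}\le\rho\,o_k$ cleanly in the surface setting: the elements $a(T)$ are curved, so the mesh-size scaling of $h_T$ under bisection and the comparison of $\mathrm{osc}^2(\pi_A f,\mathcal{T}_k)$ on $M_A$ with the genuine oscillation on $M$ must be handled using Theorem~\ref{HS41} and the uniform bounds on the singular values $\alpha_i$ of $\varphi_A$; once that comparison is in place, the algebra of the weighted-sum contraction is routine and follows \cite{CHX} verbatim.
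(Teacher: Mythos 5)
Your overall architecture---contract $(1-\delta)e_k+\beta\eta_k$ via Theorem~\ref{CHX62}, then absorb the oscillation into a weighted sum---is the right one, but the lemma you lean on, namely a uniform contraction $o_{k+1}\le\rho\,o_k$ with $\rho<1$, is false for the algorithm as stated, and your recursion does not close without it. The oscillation $o_k=\sum_{T}\|h_T(\pi_A f-\pi_A f_k)\|_T^2$ only changes on elements that are actually bisected; on unrefined elements both $h_T$ and $f_k|_T=P_Tf$ are unchanged, so their contribution to $o_{k+1}$ equals their contribution to $o_k$. Since \textsf{MARK} performs D\"orfler marking on $\{\eta_T\}$, not on the local oscillation, nothing prevents the oscillation from being concentrated on unmarked elements, in which case $o_{k+1}$ is arbitrarily close to $o_k$: no uniform $\rho<1$ exists unless one marks separately for oscillation, which \textsf{AMFEM} does not do. With only the true bound $o_{k+1}\le o_k$ (i.e.\ $\rho=1$), your absorption condition $\zeta\rho+C_\delta C\le\gamma_\delta\zeta$ becomes $\zeta+C_\delta C\le\gamma_\delta\zeta$, which is impossible for $\gamma_\delta<1$. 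So the step is not merely unproved; the route fails.

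The relationship the paper actually invokes (``between data oscillation and the decay of a second type of quasi-error,'' following \cite{HMS,CHX}) is different and needs no oscillation marking. Because $f_k=P_kf$ with nested spaces, on each $T\in\mathcal{T}_k$ one has $\|f_{k+1}-f_k\|_T^2=\|f-f_k\|_T^2-\|f-f_{k+1}\|_T^2$ by $L^2$-orthogonality, and since $h_{k+1}\le h_k$ pointwise this yields the telescoping bound $\hat{o}_k\le o_k-o_{k+1}$, with no contraction factor required. Feeding this into \eqref{alpha} with $\zeta:=C_\delta$ gives $(1-\delta)e_{k+1}+\beta\eta_{k+1}+\zeta o_{k+1}\le\alpha\bigl[(1-\delta)e_k+\beta\eta_k\bigr]+\zeta o_k$, and the leftover $\zeta o_k$ is absorbed using the fact that the oscillation is dominated by the estimator: $h_T^2\|\pi_Af-d(\pi_A\sigma_k)\|_T^2$ is a summand of $\eta_T^2$ and $d(\pi_A\sigma_k)$ lies in the polynomial space onto which $f_k$ projects, so $o_k\le C'\eta_k$ by best approximation. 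Writing $\zeta o_k\le\gamma_\delta\zeta o_k+(1-\gamma_\delta)\zeta C'\eta_k$ and choosing $\gamma_\delta<1$ close enough to $1$ that $(1-\gamma_\delta)\zeta C'\le(\gamma_\delta-\alpha)\beta$ closes the recursion and yields the stated geometric decay; your termination argument from there is fine. Your surface-transfer concerns (curved elements $a(T)$, comparing oscillation on $M$ and $M_A$ via Theorem~\ref{HS41}) are legitimate but secondary here, since both the indicator and the oscillation are already defined on the flat mesh $\mathcal{T}_k$ of $M_A$.
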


\begin{proof}
See \cite{HMS}.
\end{proof}

\subsection{Optimality of AMFEM}
Once Theorem \ref{contS}, Theorem \ref{CHX22}, \eqref{DUBeq}, \eqref{chx32} and \eqref{lowerBS} are established, optimality can be proved independent of dimension following the proof of Theorem 5.3 in \cite{RS07}.

\begin{theorem}\textit{(Optimality)}
For any  $f \in L^2\Omega^m(M)$ in $\mathfrak{B}$, shape regular $\mathcal{T}_0$ and $\epsilon > 0$, let $\sigma = \mathcal{L}^{-1} f$ and [ $\sigma_N, \mathcal{T}_N$ ] = \textit{\textsf{AMFEM}}($\mathcal{T}_H, f_H, \epsilon /2, \theta$).  Where [$\mathcal{T}_H, f_H$] = \textit{\textsf{APPROX}}(f, $\mathcal{T}_0,\epsilon/2$).  If $\sigma \in \mathcal{A}^s$ and $f \in \mathcal{A}^s_o$, then
\begin{equation}\label{optimalityHMS}
\| \sigma - \sigma_N \| \le C( \|\sigma \|_{\mathcal{A}^s} + \| f \|_{\mathcal{A}^s_o})( \# \mathcal{T}_N - \# \mathcal{T}_0 )^{-s}.
\end{equation}
\end{theorem}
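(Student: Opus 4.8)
The plan is to reduce the optimality statement to the already-established abstract optimality machinery, exactly as in \cite{RS07} and its adaptation in \cite{HMS}, by verifying that every structural ingredient that proof requires is in fact available in the surface setting. The key observation is that the AMFEM algorithm operates on the fixed approximating surface $M_A$: the triangulations $\mathcal{T}_k$ and the error indicators are defined in terms of pulled-back quantities $\pi_A f$, $\pi_A \sigma_k$, while the maps $i_A$, $\pi_A$, $i_A^*$ are Hilbert-complex isomorphisms with norms depending only on the geometry (hence absorbable into the constants). Concretely, the ingredients I would assemble are: (i) the contraction of the combined quasi-error from Theorem~\ref{CHX62}/Theorem~\ref{termination}, which gives geometric decay of $(1-\delta)e_k + \beta\eta_k + \zeta o_k$; (ii) the two-sided a posteriori bounds, i.e.\ the lower bound \eqref{lowerBS} and the discrete upper bound \eqref{DUBeq} together with the continuous upper bound \eqref{CUBeq}; (iii) the quasi-orthogonality \eqref{chx32}; (iv) the Dörfler marking and the mesh complexity estimate $\#\mathcal{T}_k \le \#\mathcal{T}_0 + C\#\mathcal{M}$ from Stevenson; and (v) the data-approximation result Theorem~\ref{CHX22} (\textsf{APPROX}), which guarantees that $\mathrm{osc}(f,\mathcal{T}_H)\le \epsilon/2$ can be achieved with $\#\mathcal{T}_H - \#\mathcal{T}_0 \le C\|f\|_{\mathcal{A}_o^{1/s}}^{1/s}\epsilon^{-1/s}$.

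The proof itself would proceed in the standard three-step pattern. First, I would show the key \emph{cardinality of the marked set} estimate: if $\sigma \in \mathcal{A}^s$, then for each iterate there is a ``minimal'' refinement realizing accuracy comparable to the current total error, and the optimality of Dörfler marking (invoking \eqref{lowerBS}, \eqref{CUBeq}, and the continuity \eqref{conee}) forces $\#\mathcal{M}_k \lesssim (\text{total quasi-error}_k)^{-1/s}$. This is the place where the surface enters: one must know that the approximation class membership $\sigma \in \mathcal{A}^s$ (a statement on $M$) transfers to the pulled-back object $\pi_A\sigma$ on $M_A$ with equivalent $\mathcal{A}^s$-norm, which follows from the uniform boundedness of $i_A$, $\pi_A$ on each element (Theorem~\ref{HS41} with $p=q=2$, combined with the shape-regularity of $\mathcal{T}_k$ on $M_A$ and the equivalence of element diameters under $a$). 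Second, I would sum the marked-set bounds over $k$ using the geometric decay from Theorem~\ref{termination} — the sum of $(\gamma_\delta^{-k/(2s)})^{-1}$-type terms converges — together with the complexity bound $\#\mathcal{T}_N - \#\mathcal{T}_0 \le C\sum_k \#\mathcal{M}_k$, to get $\#\mathcal{T}_N - \#\mathcal{T}_0 \lesssim (\|\sigma\|_{\mathcal{A}^s} + \|f\|_{\mathcal{A}_o^s})^{1/s}\,\|\sigma-\sigma_N\|^{-1/s}$. Third, I would rearrange this into \eqref{optimalityHMS}, absorbing the initial \textsf{APPROX} cost (bounded by the hypothesis $f\in\mathcal{A}_o^s$ via Theorem~\ref{CHX22}) into the same estimate, since $\epsilon/2$ and $\|\sigma-\sigma_N\|$ are comparable by the termination criterion.

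The main obstacle I anticipate is not the combinatorial/summation part — that is verbatim \cite{RS07} once the ingredients are in place — but rather the careful bookkeeping of the \emph{surface-dependent constants} and, relatedly, making precise the claim that the approximation-class estimates are invariant (up to fixed multiplicative constants) under the diffeomorphism $a\colon M_A \to M$ and its induced pullbacks on forms. One has to check that $\mathrm{osc}(\pi_A f, \mathcal{T}_H)$ and $\mathrm{osc}(f, a(\mathcal{T}_H))$ are comparable, that $\eta(\pi_A\sigma_H,\mathcal{T}_H)$ controls and is controlled by the genuine error $\|\sigma-\sigma_H\|$ on $M$ (this is exactly \eqref{lowerBS}–\eqref{CUBeq}, already proved), and that the abstract approximation spaces $\mathcal{A}^s$, $\mathcal{A}_o^s$ built on $M_A$-triangulations coincide with those built on $M$-triangulations. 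All of these reduce to the uniform two-sided bounds on $i_A$, $\pi_A$, $i_A^*$ per element (Lemma~\ref{H1rLemma}, Lemma~\ref{doubleHodgeStar}, Theorem~\ref{HS41}) plus the fact that $a$ distorts element diameters by a bounded factor; once these are invoked, the constant $C$ in \eqref{optimalityHMS} depends only on $\mathcal{T}_0$, $\theta$, and the geometry of the pair $(M,M_A)$, and the proof of Theorem~5.3 in \cite{RS07} goes through unchanged. I would therefore structure the write-up as: state the constant-transfer lemma(s), then cite \cite{RS07} for the remaining algebra, precisely mirroring how \cite{HMS} closes its optimality argument.
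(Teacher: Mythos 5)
Your proposal matches the paper's approach: the paper's proof is simply a citation to the optimality argument of \cite{CHX} (following Theorem~5.3 of \cite{RS07}), predicated on having established the continuous stability result, the \textsf{APPROX} theorem, the discrete upper bound \eqref{DUBeq}, quasi-orthogonality \eqref{chx32}, and the lower bound \eqref{lowerBS} --- precisely the ingredients you assemble. Your additional discussion of transferring the approximation-class norms and oscillation terms through $i_A$, $\pi_A$ via Theorem~\ref{HS41} is more explicit than anything the paper writes down, but it is consistent with (and arguably a useful elaboration of) the paper's one-line proof.
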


\begin{proof}
Follows directly from \cite{CHX}.
\end{proof}

The key components in the optimality of a method are the rate of the convergence and the decaying constant, and \eqref{optimalityHMS} is a good model equation for analysis.  Placing no restrictions on node placement substantially increases degrees of freedom and computational cost to the subspace approximation.  We thus restrict our discussion of optimality to the two basic cases; evolving surfaces with element nodes lying on the approximated surface, and the scheme used above. 

The rate rate of decay, $s$, is an intrinsic property related to a functions approximation class for a given refinement method.  The map between surfaces is a Hilbert complex isomorphism, and thus $H\Omega^k(M)$ will be mapped to $H\Omega^k(M_A)$, analogous to mapping between similar Sobelev spaces.  For example, when $k = m-1$ elements in  $H(div)$ on $M$ will be mapped to $H(div)$ on $M_A$.  Also, since the mapping is smooth between the two surfaces, preserving the differentiability properties of the forms.  The relationship between the smoothness of the solution and data $f$ to their approximation class is discussed in \cite{BBD04,BDDS02}.

Next we look at the multiplicative constant.  One advantage of the evolving surface approximation is that the multiplicative bounds in Theorem \ref{HS41} improve with better surface approximations.  If the initial surface approximation is good, however, this constant is negligible in terms of computation cost.  Other inefficiencies may arise by building the initial surface approximation without much analysis of the PDE.  Interpolation of the surface can be done in a standard efficient manner, and as long as the initially surface isn't excessively precise, then  the impact on $C$ in \eqref{optimalityHMS} will not be significant.  The other portion of the multiplicative constant is related to the norm of $f$ and $\sigma$ mapped to the approximating surface, and this value should be reasonable by the same arguments used for the rate of decay.

\section{Conclusion and Future Work}
\label{sec:sConclusion}
Surface finite element methods, in their nature, have additional complexities which introduce difficulties developing a generic adaptive algorithm.  Surfaces, for instance, can be described in different manners and, depending on the access to surface quantities, algorithms that are ideal in one case may be infeasible in others.  Also, when refining a mesh, element nodes are not necessarily required to lie on the approximated surface, or even alter the approximating surface between iterations.  Continually improving the surface approximation has desirable features, but it also complicates the analysis of convergence and optimality.  Along these lines, developing a method similar to \cite{DD07} where the nodes of the mesh are require to lie on $M$, and thus the surface approximation continually improves, would be of interest.

As was the case in \cite{HMS}, in this paper we have focused on the error $\| \sigma - \sigma_h \|$ for the Hodge Laplacian in the specific case $k=m$.  The results in \cite{HMS}, with the exception of the stability results, applied to general $\mathfrak{B}$ problems and such a generalizing the theory to this class of problems would be a desirable results.  The proofs above introduce no additional complications in generalizing methods for $\mathfrak{B}$ problems to surfaces, and therefore an extension of the results in \cite{HMS} would likely generalize to surfaces.

Analysis of adaptivity in the natural norm, 
$$\| u - u_h \|_{H\Omega^k(M)} + \|\sigma - \sigma_h \|_{H\Omega^{k-1}(M)}  + \|p -p_h \|,$$
is another direction of interest.  Such indicators on polygonal domains are analyzed in ~\cite{DH}, and using the results from \cite{HoSt10a,HoSt10b}, these results can be extended to surfaces with additive geometrical terms.  Analysis of algorithms using these indicators is another area of interest.



\bibliographystyle{abbrv}
\bibliography{zbib/adam,zbib/mjh-papers,zbib/mjh-extra,zbib/HoSt2010b,zbib/library}


\vspace*{0.5cm}

\end{document}